\newcommand{\R}{{\mathbb{R}}}
\newcommand{\Z}{{\mathbb{Z}}}
\newcommand{\op}{\textit{op}}
\newcommand{\NN}{{2^{\mathbb N}}}
\newcommand{\qtq}[1]{\quad\text{#1}\quad}
\newcommand{\sep}{\par\vspace*{2ex}\hbox to \textwidth{\hss---\hss}\vspace{2ex}\par}
\newcommand{\ddt}{\frac{d\ }{dt}}
\DeclareMathOperator{\tr}{tr}
\DeclareMathOperator{\dist}{dist}
\DeclareMathOperator{\signum}{signum}
\let\Re=\undefined\DeclareMathOperator{\Re}{Re}
\newtheorem{theorem}{Theorem}[section]
\newtheorem{prop}[theorem]{Proposition}
\newtheorem{lemma}[theorem]{Lemma}
\newtheorem{corollary}[theorem]{Corollary}
\theoremstyle{definition}
\newtheorem{definition}[theorem]{Definition}
\theoremstyle{remark}
\newtheorem*{remark}{Remark}
\begin{document}

\title[Low regularity conservation laws for integrable PDE]{Low regularity conservation laws\\for integrable PDE}

\author{Rowan Killip, Monica Vi\c{s}an, and Xiaoyi Zhang}

\address
{Rowan Killip\\
Department of Mathematics,\\
University of California, Los Angeles, CA 90095, USA}
\email{killip@math.ucla.edu}

\address
{Monica Vi\c{s}an\\
Department of Mathematics,\\
University of California, Los Angeles, CA 90095, USA}
\email{visan@math.ucla.edu}

\address
{Xiaoyi Zhang\\
Department of Mathematics,\\
University of Iowa, Iowa City, IA 52242, USA}
\email{xiaoyi-zhang@uiowa.edu}

\begin{abstract}
We present a general method for obtaining conservation laws for integrable PDE at negative regularity and exhibit its application to KdV, NLS, and mKdV.  Our method works uniformly for these problems posed both on the line and on the circle.
\end{abstract}

\maketitle

\section{Introduction}

The original goal of this work was to obtain low-regularity conservation laws for the Korteweg--de Vries equation
\begin{align}\tag{KdV}\label{KdV}
\ddt q = - q''' + 6qq' .
\end{align}
However, the method we developed to address this problem turns out to be of more general validity, as we shall demonstrate by applying it to the cubic NLS and mKdV equations.  All three equations can be posed both on the real line $\R$ and on the circle $\R/\Z$.  (The latter case is equivalent to that of spatially periodic initial data on the whole line.)  The methods we use here apply equally well in both settings and correspondingly, we shall be treating them in parallel.

Naturally, the existence of conservation laws must be predicated on the existence of solutions.  Conversely, it is difficult to construct solutions without control on the growth of crucial norms.  As is usual in the study of PDE, our approach here is to only consider solutions that are smooth and rapidly decaying, but prove results that are uniform in low-regularity norms.  The fact that Schwartz-space initial data lead to unique global solutions to \eqref{KdV} that remain in Schwartz class has been known for some time; see, for example, \cite{MR0385355,MR0759907,Sj,MR0410135,MR0261183}.  (Recall that Schwartz space on $\R/\Z$ is coincident with $C^\infty(\R/\Z)$; on the line, it is comprised of those $C^\infty(\R)$ functions that decay faster than any polynomial as $|x|\to\infty$.)

It has been known since \cite{MR0252826} that \eqref{KdV} admits infinitely many conservation laws. The first three are
$$
\int q(t,x)\,dx, \qquad \int q(t,x)^2 \,dx, \qtq{and} \int \tfrac12 q'(t,x)^2 + q(t,x)^3\,dx.
$$ 
We can regard this original family of conservation laws as ordered: Each conserved quantity is a polynomial in $q$ and its derivatives that is scaling homogeneous.  Thus, they can be ordered by scaling, or equivalently, by the highest order derivative that appears.  The exact form of these conservation laws is rather delicate; in particular, they need not be sign-definite.  Nonetheless, they can be used (cf. \cite[\S3]{MR0369963}) to show that for an integer $s\geq  0$, the $H^s$-norm of the solution admits a global in time bound depending only on the corresponding norm of the initial data.  For well-posedness questions, such global bounds are of greater significance than the particular conservation laws that begot them; correspondingly, our presentation will emphasize such bounds, beginning with the following result.

\begin{theorem}\label{T:main} Fix $-1\leq s<1$ and let $q$ be a Schwartz solution to \eqref{KdV} either on $\R$ or on $\R/\Z$.  Then
\begin{align*}
\| q(0) \|_{H^s} \bigl( 1 +  \| q(0) \|_{H^s}\bigr)^{-\frac23} \lesssim \| q(t) \|_{H^s} \lesssim \| q(0) \|_{H^s} \bigl( 1 +  \| q(0) \|_{H^s}^2\bigr).
\end{align*}
\end{theorem}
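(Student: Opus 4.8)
The plan is to exploit the complete integrability of \eqref{KdV}: its flow is the isospectral deformation of the Schr\"odinger operator $L=-\partial_x^2+q$, so every quantity read off from the spectral data of $L$ is automatically conserved. My aim is to manufacture from these data a single functional that is simultaneously conserved and comparable to $\|q\|_{H^s}^2$, with comparability constants depending only on $\|q\|_{H^s}$. Conservation then transfers directly to the two-sided bound, and careful bookkeeping of those constants will produce the explicit powers of $1+\|q(0)\|_{H^s}$ in the statement.

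The conserved object I would use is the perturbation determinant of $L$ at the negative energy $-\kappa^2$, equivalently the logarithm of the transmission coefficient. Writing $A:=(-\partial_x^2+\kappa^2)^{-1/2}\,q\,(-\partial_x^2+\kappa^2)^{-1/2}$, this admits, for $\kappa$ large relative to $\|q\|_{H^s}$, the convergent expansion
\[
\alpha(\kappa;q)=\sum_{\ell\ge1}\frac{(-1)^{\ell+1}}{\ell}\,\tr\bigl\{A^\ell\bigr\},
\]
and $\alpha(\kappa;q(t))$ is independent of $t$ for each fixed $\kappa$. The decisive computation is that the quadratic term is explicit: using the kernel $\tfrac{1}{2\kappa}e^{-\kappa|x-y|}$ of $(-\partial_x^2+\kappa^2)^{-1}$ one finds the manifestly positive Fourier multiplier
\[
\tr\bigl\{A^2\bigr\}=\frac1\kappa\int_{\R}\frac{|\hat q(\xi)|^2}{\xi^2+4\kappa^2}\,\frac{d\xi}{2\pi},
\]
so that, setting $h(\kappa):=\kappa\,\tr\{A^2\}=\int_\R(\xi^2+4\kappa^2)^{-1}|\hat q(\xi)|^2\,\tfrac{d\xi}{2\pi}$, the quadratic part of $\alpha$ is a fixed multiple of $h(\kappa)/\kappa$. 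As $\kappa$ sweeps $(0,\infty)$, the weight $(\xi^2+4\kappa^2)^{-1}$ visits every frequency scale relevant to $H^s$.

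To recover the norm I would integrate in $\kappa$. The elementary identity $\int_0^\infty\kappa^{2s+1}(\xi^2+4\kappa^2)^{-1}\,d\kappa\approx|\xi|^{2s}$, valid precisely for $-1<s<0$, yields $\int_0^\infty\kappa^{2s+1}h(\kappa)\,d\kappa\approx\|q\|_{\dot H^s}^2$. For $0\le s<1$ this positive kernel can only produce negative powers of $|\xi|$, so here I would instead integrate the complementary density $g(\kappa):=\|q\|_{L^2}^2-4\kappa^2 h(\kappa)=\int_\R \xi^2(\xi^2+4\kappa^2)^{-1}|\hat q|^2\,\tfrac{d\xi}{2\pi}$ against $\kappa^{2s-1}$; the associated Stieltjes-type identity converges exactly for $0<s<1$ and again recovers $\|q\|_{\dot H^s}^2$. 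Crucially, $g$ is conserved, being a combination of the $\alpha(\kappa)$-family and the classically conserved $\int q^2$. The endpoint $s=-1$, as well as the passage from homogeneous to inhomogeneous norms, is handled by also retaining $h(\kappa_0)$ at a single threshold $\kappa_0\approx1+\|q\|_{H^s}$, which by itself controls $\|q\|_{H^{-1}}^2$. (On $\R/\Z$ the frequency integrals become sums, with no change to the argument.) Assembling these pieces produces a conserved functional $\mathcal E_s(q)\approx\|q\|_{H^s}^2$.

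The main obstacle is precisely the justification for replacing $\alpha$ by its quadratic part, i.e.\ showing that the tail $\sum_{\ell\ge3}\tfrac{(-1)^{\ell+1}}{\ell}\tr\{A^\ell\}$ is a genuine perturbation of $h$. At negative regularity $q$ is merely a distribution, so the Hilbert--Schmidt and multilinear estimates bounding these higher traces are delicate, and they force $\kappa_0$ to grow with $\|q\|_{H^s}$. Granting this, conservation of each $\alpha(\kappa;q(t))$ and of $\int q^2$ gives conservation of $\mathcal E_s$, whence
\[
\|q(t)\|_{H^s}^2\approx \mathcal E_s\bigl(q(t)\bigr)=\mathcal E_s\bigl(q(0)\bigr)\approx\|q(0)\|_{H^s}^2,
\]
with comparability constants that are explicit powers of $\kappa_0\approx1+\|q\|_{H^s}$. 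Optimizing the choice of $\kappa_0$ in the remainder bound is what converts these into the asymmetric factors $\bigl(1+\|q(0)\|_{H^s}\bigr)^{-2/3}$ and $1+\|q(0)\|_{H^s}^2$ recorded in the statement, while the convergence of the $\kappa$-integrals is what confines the whole argument to $-1\le s<1$.
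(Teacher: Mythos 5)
Your overall strategy is the paper's own --- perturbation determinant, its quadratic term as a conserved proxy for $\langle q,(\xi^2+4\kappa^2)^{-1}q\rangle$, and integration over $\kappa$ to synthesize the $H^s$ norm (the paper does exactly your continuous $\kappa$-integration for $-1<s<0$, and a dyadic-difference variant of your density $g$ for $0\le s<1$). But there is a genuine gap at the foundation: you claim conservation of $\alpha(\kappa;q(t))$ is ``automatic'' because the KdV flow is an isospectral deformation of $L$. The paper explicitly exhibits this as a fallacy: unitary equivalence of the operators $-\partial^2+q(t)$ does \emph{not} force the perturbation determinant to be constant. Take $q(t,x)=v(x+t)$ with $v$ Schwartz and $v'<0$; the operators $-\partial^2+q(t)$ are unitarily equivalent for all $t$, yet $\log\det\bigl[1+(-\partial^2+q(0)+\kappa^2)^{-1/2}(q(t)-q(0))(-\partial^2+q(0)+\kappa^2)^{-1/2}\bigr]>0$ for $t>0$ and $\kappa$ large, since every term in the series is positive. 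Thus conservation must be proved directly; the paper does this (Proposition~\ref{P:KdVcons}) by differentiating the series term by term and telescoping via commutator identities. Your fallback identification of $\alpha$ with the logarithm of the transmission coefficient is also not a repair: it is itself nontrivial to establish, and on $\R/\Z$ there is no transmission coefficient at all, so your parenthetical ``no change to the argument'' on the circle fails precisely at the conservation step.

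Two further points would sink the write-up as it stands. First, your series includes the $\ell=1$ term, $\tr A=\tfrac{1}{2\kappa}\int q$, which on the line is not controlled by any $H^s$ norm with $s<0$; so your claimed comparability of $\alpha$ with its quadratic part is false as written. The paper's remedy is Hilbert's renormalized determinant $\det_2$, i.e.\ dropping $\ell=1$ from the outset (harmless for conservation since $\int q$ is separately conserved for Schwartz solutions, but it must be said). Second, ``granting'' the tail estimate conceals the actual bootstrap structure: the tail must be bounded in \emph{absolute value} after the cancellation in $g$, which requires the already-proven $s=-1$ a priori bound \eqref{Key Est}, and the resulting estimate (Proposition~\ref{P:D}) gives $D(t;\kappa)\lesssim\kappa^{-\frac32-3\sigma}\|q(0)\|_{H^\sigma}^3$ with $-1\le\sigma\le0$; integrating this against $\kappa^{2s-1}$ converges only for $s<\tfrac34+\tfrac{3\sigma}{2}\le\tfrac34$. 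Reaching $\tfrac34\le s<1$ needs the second tail bound \eqref{E:Dest2} together with $\|q(t)\|_{L^\infty}\lesssim\|q(t)\|_{B^{1/2,2}_1}$, itself controlled by the already-established Besov conclusion at $s=\tfrac12$ --- a two-stage bootstrap your sketch cannot reproduce. Finally, the exponent $-\tfrac23$ in the lower bound is not produced by optimizing $\kappa_0$ in a remainder estimate: it follows by applying the upper bound with time reversed and solving $\|q(0)\|\lesssim\|q(t)\|(1+\|q(t)\\|^2)$ for $\|q(t)\|$.
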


A number of instances of this result have appeared before.  In the torus case, this result is completely subsumed by \cite{MR2179653}.  On the real line, the case $s=-1$ was treated in \cite{MR3400442}.  There is also the work \cite{KT}, contemporaneous with our own, which covers the full range $s\geq -1$ in the line case.  We claim two particular merits for our method here: (i) it is much simpler, and (ii) it works uniformly on both the line and the circle.  Our methods also allow us to obtain a priori bounds in Besov spaces (see Section~\ref{S:3} for the definition).  Specifically, we shall prove the following:

\begin{theorem}\label{T:Bmain} Fix parameters $r$ and $s$ conforming to either of the following restrictions:
$s=-1$ and $1\leq r\leq 2$, or $-1< s < 1$ and $1\leq r\leq \infty$.  If $q$ is a Schwartz solution to \eqref{KdV} either on $\R$ or on $\R/\Z$, then
\begin{align*}
\| q(0) \|_{B^{s,2}_r} \bigl( 1 +  \| q(0) \|_{B^{s,2}_r}^2\bigr)^{-\frac23} \lesssim \| q(t) \|_{B^{s,2}_r} \lesssim \| q(0) \|_{B^{s,2}_r} \bigl( 1 +  \| q(0) \|_{B^{s,2}_r}^2\bigr).
\end{align*}
\end{theorem}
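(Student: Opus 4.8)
The starting point is the conserved quantity that will drive Theorem~\ref{T:main}: for large $\kappa>0$ one forms $\alpha(\kappa;q)$ from the suitably renormalized perturbation determinant of the Lax operator $-\partial_x^2+\kappa^2-q$, whose leading quadratic part behaves like the frequency-weighted mass $\int \frac{|\hat q(\xi)|^2}{\kappa^2+\xi^2}\,d\xi$ and whose higher-order part is controlled by higher powers of a low-regularity norm of $q$. The crucial observation is that $H^s=B^{s,2}_2$ and $B^{s,2}_r$ differ only in the outer exponent: the former is reconstructed from the conserved data by an $\ell^2$ (indeed continuous $L^2$) superposition in $\kappa$, the latter by the dyadic $\ell^r$ superposition. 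My plan is therefore to re-run the proof of Theorem~\ref{T:main} verbatim through the construction and conservation of $\alpha(\kappa;q)$, and to modify only the final, norm-reconstruction step, reproving the underlying norm-equivalence lemma with $\ell^r$ in place of $\ell^2$.

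Concretely, I would first isolate a Littlewood--Paley piece from $\alpha$. Since its quadratic part accumulates low frequencies, I would pass to a dyadic difference (or a $\kappa\,\partial_\kappa$ derivative) of $\kappa^2\alpha(\kappa;q)$ at $\kappa\sim 2^N$; the resulting Fourier multiplier peaks at $|\xi|\sim\kappa$ and decays on both sides, so this quantity is comparable to $\|P_{2^N}q\|_{L^2}^2$ up to a remainder inherited from the higher-order part of $\alpha$. Forming the $\ell^{r/2}$ sum of $2^{2Ns}$ times these pieces produces a functional $\mathcal F_r(q)$ depending only on the conserved family $\{\alpha(\cdot;q)\}$, together with a two-sided comparison $\mathcal F_r(q)\approx\|q\|_{B^{s,2}_r}^2$ valid when $\|q\|_{B^{s,2}_r}$ is small. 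Because $\mathcal F_r$ is conserved, evaluating this comparison at times $t$ and $0$ yields the theorem in the small-norm regime; the polynomial factors $(1+\|q\|_{B^{s,2}_r}^2)^{\pm}$ accommodating arbitrary data are then recovered exactly as in Theorem~\ref{T:main}, from the explicit norm-dependence of the remainder bounds. (The lower bound may alternatively be read off from the upper bound applied to the reflected solution $q(-t,-x)$, which again solves \eqref{KdV}, reflection being an isometry of $B^{s,2}_r$.)

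The substance of the argument, and the step I expect to be the main obstacle, is the control of the remainder $\alpha-\alpha^{(2)}$ in the $\ell^r$ norm. In the Hilbert-space case this enters only through a square-summable bound, which is automatic; for general $r$ I would instead estimate the $N$-th dyadic remainder by a discrete convolution, in the dyadic index, of the profile $\{2^{Ns}\|P_{2^N}q\|_{L^2}\}$ against a fixed, rapidly decaying kernel arising from the off-diagonal decay of the resolvent $(-\partial_x^2+\kappa^2)^{-1}$, and then apply Young's inequality (or Schur's test) in $\ell^r$ to dominate it by a higher power of $\|q\|_{B^{s,2}_r}$, uniformly for $1\le r\le\infty$. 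The endpoint constraints emerge at the level of the quadratic term: for $-1<s<1$ the localizing multiplier renders the $\kappa$-superposition summable for every $r$, whereas at $s=-1$ the conserved functional is intrinsically a square-sum over frequencies, so its comparison with $\|q\|_{B^{-1,2}_r}$ is two-sided only when $r\le 2$ (the borderline being $B^{-1,2}_2=H^{-1}$), which is precisely the stated restriction. Carrying out these multilinear estimates uniformly in $r$, and simultaneously on $\R$ and on $\R/\Z$, is the only genuinely new work beyond Theorem~\ref{T:main}.
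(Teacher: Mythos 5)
Your overall architecture --- conservation of $\alpha(\kappa)$, dyadic differencing in $\kappa$ to produce a multiplier peaked at $|\xi|\sim\kappa$, reassembly of the Besov norm by an $\ell^r$ superposition with Schur-test comparisons, and time reversal for the lower bound --- is exactly the paper's (Lemmas~\ref{L:Besov1}--\ref{L:Besov3}, Proposition~\ref{P:D}, Theorem~\ref{T:Besov}). But there is a genuine gap in the step you yourself identify as the main obstacle: the tail estimate. The series remainder is controlled in the paper by $\|\sqrt{R_0}\,q\,\sqrt{R_0}\|_{\mathfrak I_2}^2\,\|\sqrt{R_0}\,q\,\sqrt{R_0}\|_{\op}$, and if one bounds the operator norm by the Hilbert--Schmidt norm (as your convolution scheme implicitly does, working only with the quadratic-in-$\hat q$ kernel data), the resulting error \eqref{E:Dest1} decays no faster than $\kappa^{-3/2}$: since the resolvent symbol $(\xi^2+4\kappa^2)^{-1}$ is flat for $|\xi|\lesssim\kappa$, positive regularity of $q$ buys nothing there, which forces $\sigma\leq 0$ in \eqref{E:Dest1}. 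Weighting the error at scale $\kappa_0N$ by $N^s$ then requires $s<\tfrac34+\tfrac{3\sigma}{2}\leq\tfrac34$ for dyadic summability, so your scheme as described stops at $s<\tfrac34$ and does not prove the theorem for $\tfrac34\leq s<1$. The paper crosses this barrier with a different device: the operator-norm bound $\|\sqrt{R_0}\,q\,\sqrt{R_0}\|_{\op}\leq\kappa^{-2}\|q\|_{L^\infty}$ yields the improved error \eqref{E:Dest2} with $\kappa^{-2}$ decay, and the needed time-uniform control of $\|q(t)\|_{L^\infty}$ is obtained by bootstrapping the already-proven case of the theorem at $B^{\frac12,2}_1$ via $\|q(t)\|_{L^\infty}\lesssim\|q(t)\|_{B^{\frac12,2}_1}$. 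Nothing in your sketch supplies this extra input; moreover your expectation of a ``fixed, rapidly decaying kernel'' in the dyadic index is unjustified --- the decay of the relevant multipliers is only polynomial in the frequency ratio, which is precisely why the whole method terminates at $s=1$, as the paper's introduction notes.

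Separately, you misdiagnose the restriction to $r\leq 2$ at $s=-1$. The two-sided comparison between $\|f\|_{B^{-1,2}_r}$ and the dyadic family of conserved quadratic quantities holds for \emph{all} $1\leq r\leq\infty$ (Lemma~\ref{L:Besov2} with the weight \eqref{w-1}); the conserved functional is not ``intrinsically a square-sum'' in any sense that obstructs $r>2$. The true source of the restriction is that the entire method is anchored at $H^{-1}$: one must take $\kappa_0\sim 1+\|q(0)\|_{H^{-1}}^2$ to make the Hilbert--Schmidt norm small (Theorem~\ref{s<0}), and the error terms in \eqref{E:Dest1} are phrased in $\|q(0)\|_{H^\sigma}$ with $\sigma\in[-1,0]$, which at $s=-1$ forces $\sigma=-1$. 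Closing the estimate purely in terms of $\|q(0)\|_{B^{-1,2}_r}$ therefore requires the embedding $B^{-1,2}_r\hookrightarrow B^{-1,2}_2\cong H^{-1}$, which holds exactly when $r\leq 2$; for $-1<s<1$ the embedding into $H^{-1}$ is free for every $r$, which is why no restriction appears there. This misattribution does not break your argument within the stated range, but it means your plan to recover the large-data polynomial factors ``exactly as in Theorem~\ref{T:main}'' is incomplete as stated: you must verify at each $(s,r)$ that the norms appearing in the choice of $\kappa_0$ and in the remainder bounds are dominated by $\|q(0)\|_{B^{s,2}_r}$, and this verification is where the hypothesis of the theorem actually enters.
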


As a rather obvious corollary of this result, we see that local well-posedness in any of these Besov spaces can be immediately upgraded to global well-posedness.  We note two particular applications of this:  First, our result provides a simpler alternative to \cite{MR1969209}, which extended the local well-posedness results of \cite{MR2054622,MR2531556,MR1329387,MR2501679} to global well-posedness.  The range of Sobolev spaces so obtained ($s\geq -\frac12$ on the circle and $s\geq -\frac34$ on the line) were shown in \cite{MR2018661} to be sharp for analytic well-posedness; that is, the data-to-solution map cannot be analytic on any larger $H^s$ space.

The bounds shown in \cite{MR1969209} are not uniform in time; indeed, to transfer local well-posedness to global well-posedness, one need only show that the norm does not blow up in finite time.  We also wish to draw attention to the paper \cite{MR3292346}, which proves a priori bounds in $H^{-4/5}(\R)$ locally in time.
 
As a second application, we note that Theorem~\ref{T:Bmain}  extends to global-in-time the analytic local well-posedness result of Koch \cite[Theorem~6.6]{Koch} in the $B^{-3/4,2}_\infty(\R)$ norm, which yields the lowest regularity at which global well-posedness on the line is known at this time (irrespective of the degree of regularity of the data-to-solution map).  On the circle, however, \cite{MR2267286} shows that \eqref{KdV} is globally well-posed in $H^s$ for all $s\geq -1$.  Conversely, \cite{MR2830706} shows there is no hope of even local well-posedness for $s<-1$ in either geometry.

Let us turn now to a brief overview of our method.  The Lax pair formulation of \eqref{KdV} provides an elegant expression of the complete integrability of this equation, that transcends the distinction between the problem on the line with decaying and periodic data.  We recall the following from \cite{MR0235310}:  If
\begin{align*}
L(t) &:= -\partial_x^2 + q(t,x), \\
P(t) &:= - 4 \partial_x^3 + 3\bigl(\partial_x q(t,x) + q(t,x) \partial_x \bigr),
\end{align*}
then
\begin{align}
\text{$q(t)$ solves \eqref{KdV}} \iff \ddt L(t) = [P(t),\, L(t)].        \label{Lax}
\end{align}
In particular, if $q(t)$ is a Schwartz-space solution to \eqref{KdV} then the unitary operators $U(t)$ defined via
$$
\ddt U(t) = P(t) U(t) \qtq{with} U(0) = \textrm{Id}
$$
conjugate $L(0)$ with $L(t)$; specifically,
\begin{align}\label{unitary}
L(t) = U(t) L(0) U(t)^*.
\end{align}
Thus we may say that the KdV flow ``preserves all spectral properties of $L$''.

It is at this point that the study of the two geometries typically diverges.  In the periodic case, the spectrum (as a set) already carries an enormous amount of information, usually expressed in terms of the gap lengths.  (Even gaps of length zero can be located in the spectrum by examining harmonic capacity.)  In the decaying case, however, the spectrum (as a set) carries relatively little information.  While the location of any bound states is non-trivial information, the essential spectrum always simply fills $[0,\infty)$.  Beginning already with \cite{GGKM}, the remedy has been to consider scattering data.

Let us regress for a moment:  A symmetric matrix is uniquely determined up to unitary equivalence by its characteristic polynomial.  There is no hope of constructing a characteristic polynomial for $L(t)$ directly --- it is an unbounded operator.   Therefore, we must renormalize.  This line of thinking leads quickly to a quantity known as the perturbation determinant,
\begin{align}\label{formal pert det}
 \det\left( \bigl(-\partial_x^2 + q + \kappa^2\bigr)\Big/\bigl(-\partial_x^2 + \kappa^2\bigr) \right),
\end{align}
which formally represents the ratio of the `characteristic polynomials' of $L(t)$ and that of the Schr\"odinger operator with no potential.

To proceed further, we introduce the notation
$$
R_0:=(-\partial_x^2 + \kappa^2 )^{-1}
$$
for the resolvent of the Schr\"odinger operator with zero potential, acting on $L^2(\R)$ in the line setting or on $L^2(\R/\Z)$ when considering the circle case.  In either case, this is well-defined whenever $\Re \kappa >0$; however, for simplicity it will suffice to consider $\kappa>0$ in all that follows.  With this notation in place and taking $q$ to be a Schwartz function, the naive expression \eqref{formal pert det} can be rewritten as
$$
\det\bigl(1 + q R_0\bigr),
$$
which is well defined for all $\Re \kappa >0$, both in the sense of Fredholm \cite{MR1554993} and in the more modern Hilbert-space sense (cf. \cite{MR2154153}).  Moreover, one obtains the same function of $\kappa$ if one renormalizes on the right, rather than on the left:
$$
\det\bigl(1 + R_0 q\bigr) = \det\bigl(1 + q R_0\bigr).
$$

In order to pursue our goal of studying the low regularity problem, we must introduce two further renormalizations.  The first problem is one of singularities: the operator $qR_0$ is not bounded unless $q$ is locally square integrable.  Our remedy is to put `half' of the free resolvent on either side and so consider
$$
\det\bigl(1 + \sqrt{R_0} q \sqrt{R_0} \bigr) .
$$
As we take $\kappa>0$, $R_0$ is a positive semi-definite operator; by the square-root of $R_0$, we mean the positive semi-definite square-root.

The second problem (appearing solely in the line case) is one of decay.  If, for example, we take $q\leq 0$, then all terms in the Fredholm expansion of the determinant are negative and the leading term is
\begin{align}\label{E:trace term}
 \tr\bigl( \sqrt{R_0} q \sqrt{R_0} \bigr) = \tfrac{1}{2\kappa} \int q(x)\,dx.
\end{align}
Evidently, this will not extend to functions $q$ that are merely $L^2$ decaying.  Our remedy here is to employ a device of Hilbert \cite{Hilbert} and work with
$$
\det{}_2 \bigl(1 + \sqrt{R_0} q \sqrt{R_0} \bigr) := \det\bigl(1 + \sqrt{R_0} q \sqrt{R_0} \bigr) \exp\bigl\{-\tr\bigl(\sqrt{R_0} q \sqrt{R_0}\,\bigr)\bigr\}.
$$

As noticed already by Hilbert, this renormalization allows one to extend the notion of determinant to the class of Hilbert--Schmidt operators (cf. \eqref{HS norm}).  This is good news: a simple computation (cf. Proposition~\ref{P:free}) reveals that $\sqrt{R_0} q \sqrt{R_0}$ is Hilbert--Schmidt if and only if $q\in H^{-1}$.  In fact (and this will be important), the Hilbert--Schmidt norm of this operator is comparable to the $H^{-1}$ norm of the function $q$.

With one more transformation, which is solely for cosmetic/expository purposes, we finally arrive at our central object:
\begin{align}\label{KdValpha fake}
\alpha(\kappa;q) = -\log \det{}_2 \bigl(1 + \sqrt{R_0} q \sqrt{R_0} \bigr).
\end{align}
In fact (and this is one of the reasons for taking a logarithm), we may avoid the theory of operator determinants by simply expanding in a series:
\begin{align}\label{KdValpha}
\alpha(\kappa;q) = \sum_{\ell=2}^\infty \frac{(-1)^\ell}{\ell} \tr\Bigl\{ \bigl( \sqrt{R_0}\, q\, \sqrt{R_0}\bigr)^\ell \Bigr\}.
\end{align}
Later we will see that this series converges provided $\kappa$ is sufficiently large, because this makes the Hilbert--Schmidt norm of $\sqrt{R_0}\, q\, \sqrt{R_0}$ small.  But in this regime, we may reasonably conflate this series with its first term, which by Proposition~\ref{P:free} is comparable to $\| q \|_{H^{-1}}^2$.   In this way, conservation of $\alpha(\kappa;q(t))$ under the KdV flow guarantees global control on the $H^{-1}$ norm of the solution.  This is precisely how we will prove Theorem~\ref{T:main} in the case $s=-1$; see Section~\ref{S:2} for details.

Although we have led the reader to expect that $\alpha(\kappa,q(t))$ is conserved under the KdV flow, this does not seem to follow elementarily from \eqref{Lax} or \eqref{unitary}.   Indeed, an example discussed in Section~\ref{S:2} provides an obstruction that any such abstract argument would need to circumvent.  Instead, we give a simple direct proof that $\alpha(\kappa,q(t))$ is conserved, based on matching terms in the derivative of the expansion \eqref{KdValpha}.

An alternate approach to showing that the (renormalized) perturbation determinant is conserved would be to connect it to the conserved quantities used more traditionally, namely, the transmission coefficient (for decaying data) and the discriminant (for periodic data).  The arguments of \cite{MR0044404}, for example, show that the (unrenormalized) perturbation determinant coincides with the reciprocal of the transmission coefficient; however, we contend that these arguments (and indeed even the definition of the transmission coefficient) are rather more complicated than what we present here.

As we outlined above, the $s=-1$ case of Theorem~\ref{T:main} follows by considering $\alpha(\kappa;q(t))$ at a single (sufficiently large) value of $\kappa$.  Theorem~\ref{T:Bmain} and the remaining cases of Theorem~\ref{T:main} rely on combining several values of $\kappa$.  The simplest example of this is Theorem~\ref{T:main} in the regime $-1<s<0$, which follows from our argument at $s=-1$ simply though the integral \eqref{secIntegral}.  (See also the proof of Corollary~\ref{C:neg} for the closely related argument applicable to Besov norms for the same range of $s$.)  The heuristic principle at play here is that $\alpha(\kappa)$ captures the $L^2$ norm of the part of $q$ living at frequencies $|\xi| \lesssim \kappa$.  It is then simply a matter of expressing the relevant Sobolev or Besov norms as combinations of such quantities.

The argument just sketched cannot extend directly to $s>0$, because in this regime high-frequencies are more heavily weighed than low frequencies.  To overcome this obstacle, we consider a properly weighted difference of $\alpha(\kappa)$ and $\alpha(2\kappa)$ that (heuristically at least) captures the $L^2$ norm of the part of $q$ living at frequencies $|\xi|\sim \kappa$.  These can then be combined to yield conservation laws comparable to the Sobolev and Besov norms of interest.

One small hiccup associated to the passage to $s>0$ is that it no longer suffices to dismiss the tail of the series \eqref{KdValpha} as being merely a small percentage of the leading term.  Taking differences, requires us to estimate it in absolute value.

The limitation to $s<1$ stems from a breakdown of the crude heuristic outlined above: frequencies outside the regime $|\xi|\sim \kappa$ do contribute to the difference and this contribution becomes unacceptable once $s\geq 1$.  To pass to higher values of $s$, one must use more sophisticated differencing involving more sample points and the simplicity of our method begins to erode.  We have no plans to pursue this direction.

The NLS and mKdV models mentioned at the beginning of this paper actually fit under a common umbrella: they admit a Lax pair with the same operator $L(t)$ (but different operators $P(t)$).  The fact that this operator acts on vector-valued functions is of no consequence to our method.  There are, however, two meaningful changes that we must discuss.

The first is a simplification: the leading term in the series defining the perturbation determinant is already quadratic in $q$ and so there is no need to use renormalized determinants.

The second change is that the series \eqref{AKNS alpha} involves a non-selfadjoint operator and concomitant with this, the leading term no longer dominates the series in any self-evident way.  In this regard, these problems are more closely analogous to the discussion of KdV for $s>0$ appearing in Section~\ref{S:3}, rather than the simpler treatment of $-1\leq s<0$ in Section~\ref{S:2}.

Our principal result for NLS and mKdV on Sobolev spaces $H^{s}$ is the following:

\begin{theorem}\label{T:AKNS Hs}
Fix $-\tfrac12 < s <0$ and let $q(t)$ be a Schwartz solution to \eqref{NLS} or \eqref{mKdV}.  Then
\begin{align*}
\| q(t) \|_{H^s} \lesssim  \| q(0) \|_{H^s} \Bigl( 1 + \| q(0) \|_{H^s}^2 \smash{ \Bigr)^{\frac{|s|}{1-2|s|}} }.
\end{align*}
\end{theorem}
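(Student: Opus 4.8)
My plan is to adapt the Section~\ref{S:3} treatment of KdV at positive regularity (rather than the simpler Section~\ref{S:2} argument), since \eqref{NLS} and \eqref{mKdV} share the AKNS $L$-operator and hence the same perturbation determinant $\alpha(\kappa;q)$ and expansion \eqref{AKNS alpha}; because the whole argument uses only this common structure, I would treat both equations at once. Two structural features organize the proof. First, the potential enters $L$ only off-diagonally, so the operator whose powers are traced in \eqref{AKNS alpha} is block off-diagonal with blocks $A=\sqrt{R_0}\,q\,\sqrt{\tilde R_0}$ and $B=\sqrt{\tilde R_0}\,r\,\sqrt{R_0}$ built from the first-order free resolvents (with $r=\mp\bar q$ for NLS and its real reduction for mKdV); consequently every odd power has vanishing trace, the series runs over even powers, and the leading term $\alpha_2(\kappa;q)=\tr(AB)$ is already quadratic, which is why no renormalized determinant is needed. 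Second, a computation in the spirit of Proposition~\ref{P:free} should identify $\alpha_2$ with a positive-definite quadratic form comparable to
\begin{equation*}
\int \frac{|\hat q(\xi)|^2}{\sqrt{\xi^2+\kappa^2}}\,d\xi,
\end{equation*}
whose finiteness is exactly $q\in H^{-1/2}$; this is the origin of the restriction $s>-\tfrac12$. I would take as given (from the term-matching argument used for \eqref{KdValpha}, adapted to each $P$) that $\alpha(\kappa;q(t))$ is conserved along both flows wherever the series converges.

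To recover $\|q\|_{H^s}$ I would combine these conserved quantities over $\kappa$. For $-\tfrac12<s<0$ one has $-1<2s<0$, and the elementary identity
\begin{equation*}
\int_0^\infty \frac{\kappa^{2s}}{\sqrt{\xi^2+\kappa^2}}\,d\kappa \approx |\xi|^{2s}
\end{equation*}
shows $\int_0^\infty\kappa^{2s}\alpha_2(\kappa;q)\,d\kappa\approx\|q\|_{\dot H^s}^2$, the negative-$s$ analogue of the combinations in Corollary~\ref{C:neg}. Since the series is available only for $\kappa$ bounded below, I would fix a threshold $\kappa_0\geq1$ and split at frequency $\kappa_0$: the part $|\xi|\gtrsim\kappa_0$ is captured by $\int_{\kappa_0}^\infty\kappa^{2s}\alpha_2(\kappa;q)\,d\kappa$, while the part $|\xi|\lesssim\kappa_0$ is controlled by the single quantity $\kappa_0\,\alpha_2(\kappa_0;q)$, the passage from $L^2$ mass to $H^s$ mass below $\kappa_0$ being free because $s<0$. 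Choosing
\begin{equation*}
\kappa_0\approx\max\bigl\{1,\ \|q(0)\|_{H^s}^{2/(1-2|s|)}\bigr\},
\end{equation*}
namely the value below which $\alpha_2$ ceases to be small, using $\alpha_2(\kappa;q)\lesssim\kappa^{-1-2s}\|q\|_{H^s}^2$ for the low-frequency term and the reconstruction bound for the high-frequency integral, conservation of $\alpha$ (together with $\alpha\approx\alpha_2$ for $\kappa\geq\kappa_0$, see below) should yield
\begin{equation*}
\|q(t)\|_{H^s}^2 \lesssim \kappa_0\,\alpha_2(\kappa_0;q(0))+\int_{\kappa_0}^\infty\kappa^{2s}\alpha_2(\kappa;q(0))\,d\kappa \lesssim \kappa_0^{2|s|}\,\|q(0)\|_{H^s}^2.
\end{equation*}
As $\kappa_0^{2|s|}\approx(1+\|q(0)\|_{H^s}^2)^{2|s|/(1-2|s|)}$, taking square roots reproduces the stated exponent $\tfrac{|s|}{1-2|s|}$.

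The step that makes this work, and what I expect to be the main obstacle, is controlling the remainder $\alpha-\alpha_2=\sum_{k\geq2}\tfrac{\pm1}{k}\tr\bigl((AB)^k\bigr)$ in \emph{absolute value}: unlike defocusing KdV there is no self-adjointness or sign forcing the leading term to dominate the series. I would use $A,B\in\I_2$ and the ideal bound $|\tr((AB)^k)|\leq(\|A\|_{\I_2}\|B\|_{\I_2})^k$ to make the remainder $O\bigl((\|A\|_{\I_2}\|B\|_{\I_2})^2\bigr)$ once $\|A\|_{\I_2}\|B\|_{\I_2}\leq\tfrac12$; since both Hilbert--Schmidt norms are comparable to $\alpha_2(\kappa;q)^{1/2}$, this is exactly the regime $\alpha_2(\kappa;q)\lesssim1$, i.e.\ $\kappa\gtrsim\|q\|_{H^s}^{2/(1-2|s|)}$, which is what dictates the choice of $\kappa_0$. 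The real difficulty is that the threshold for convergence at time $t$ depends on $\|q(t)\|_{H^s}$, the very quantity being estimated, so this is not a one-shot inequality but a continuity/bootstrap argument: because $q$ is Schwartz, $t\mapsto\|q(t)\|_{H^s}$ is finite and continuous, so one would posit the desired bound with a doubled constant on a maximal interval, use it to guarantee $\alpha\approx\alpha_2$ at $\kappa=\kappa_0$ throughout, and then close the estimate to improve the constant. This bootstrap, rather than any single estimate, is what promotes the $t=0$ comparison into the global-in-time bound.
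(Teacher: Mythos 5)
Your overall architecture---the conserved $\alpha$ of \eqref{AKNS alpha} with its quadratic leading term $\alpha_2$, a combination over $\kappa\geq\kappa_0$ to rebuild $\|q\|_{H^s}$, a quartic tail bound, and a continuity/bootstrap with $\kappa_0\approx(1+\|q(0)\|_{H^s}^2)^{1/(1-2|s|)}$---is essentially the paper's proof of Theorem~\ref{T:AKNS Besov} (which contains the $H^s$ case as $r=2$), with your continuous $\kappa$-integral playing the role of the paper's dyadic sum over $\kappa=\kappa_0N$ and the norm $Z_{\kappa_0}$. But the step you yourself identify as the crux contains a genuine error: you assert that $\|A\|_{\mathfrak{I}_2}\|B\|_{\mathfrak{I}_2}$ is comparable to $\alpha_2(\kappa;q)^{1/2}$, equivalently that $\alpha_2\approx\int|\hat q(\xi)|^2(\xi^2+\kappa^2)^{-1/2}\,d\xi$. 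Neither is true. By Lemma~\ref{L:AKNS tr}, $\alpha_2(\kappa;q)=\int\frac{2\kappa|\hat q(\xi)|^2}{4\kappa^2+\xi^2}\,d\xi$---the same weight as in the KdV case, finite for every $q\in H^{-1}$---whereas by Lemma~\ref{L:AKNS I2} the Hilbert--Schmidt norm carries the weight $\log\bigl(4+\tfrac{\xi^2}{\kappa^2}\bigr)(4\kappa^2+\xi^2)^{-1/2}$. For $\hat q$ concentrated at $|\xi|\sim R\kappa$ the ratio of the two quadratic quantities is $\sim R\log R$, which is unbounded; in particular, smallness of $\alpha_2$ gives neither convergence of the series \eqref{AKNS alpha} nor the remainder bound $|\alpha-\alpha_2|\lesssim\alpha_2^2$, and your bootstrap cannot close as written: conservation controls $\alpha$ and hence (modulo the tail) $\alpha_2(t)$, but smallness of $\alpha_2(t)$ says nothing about the Hilbert--Schmidt norm at time $t$, which is what the convergence condition \eqref{k small} demands. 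This is precisely the non-selfadjoint difficulty the introduction warns about: the leading term no longer dominates the series in any self-evident way, and the two quadratic objects you conflated are genuinely different.

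The gap is repairable, and the repair is the paper's key computation: the explicit kernel formula \eqref{AKNS I2}, whose logarithmic weight yields (cf.~\eqref{11:25}--\eqref{11:26}) $\bigl\|(\kappa-\partial)^{-1/2}q(\kappa+\partial)^{-1/2}\bigr\|_{\mathfrak{I}_2}^2\lesssim\kappa^{2|s|-1}\|q\|_{H^s}^2$ for $\kappa\geq\kappa_0$, with the extra gain $(\kappa_0/\kappa)^{1-2|s|}$ relative to the scale-$\kappa_0$ norm. This estimate holds exactly because $\log(4+u^2)(1+u^2)^{|s|}/\sqrt{4+u^2}$ is bounded iff $|s|<\tfrac12$, and it---not finiteness of $\alpha_2$, which as noted persists down to $H^{-1}$---is the true source of the restriction $s>-\tfrac12$ in the proof. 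Your reconstruction step survives the correction (with the true weight $\tfrac{2\kappa}{4\kappa^2+\xi^2}$, the identity $\int_{\kappa_0}^\infty\kappa^{2s}\tfrac{2\kappa}{4\kappa^2+\xi^2}\,d\kappa\approx(\xi^2+\kappa_0^2)^s$ holds for all $-1<s<0$); once you bound the tail by $\mathrm{HS}^4\lesssim\kappa^{4|s|-2}\kappa_0^{-4|s|}\|q\|_{Z_{\kappa_0}}^4$ and run the bootstrap on $\|q(t)\|$ itself---so that Hilbert--Schmidt smallness at time $t$ comes from the posited bound rather than from $\alpha_2$---your argument closes and reproduces the paper's proof. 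A lesser omission: you take conservation of $\alpha$ as given; the paper proves it by direct term-matching (Propositions~\ref{P:NLS dot} and~\ref{P:HmKdV dot}), handling real \eqref{mKdV} via the observation that its real-valued solutions solve \eqref{HmKdV}.
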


We also obtain an analogous result in  Besov spaces; see Theorem~\ref{T:AKNS Besov}.    Independently of us, Koch and Tataru \cite{KT} proved an analogue of Theorem~\ref{T:AKNS Hs} for all $s>-\frac12$ in the line case, by a method diverging sharply from our own.   

 The restriction to $s> -\frac12$ is necessary here, because solutions to the cubic NLS can undergo arbitrarily large inflation of the $H^{-1/2}$ norm starting from arbitrarily small data; see \cite{CarlesKappeler,Kishimoto,Oh:inflate}.   Tempted by the fact that this endpoint is forbidden, we have investigated how closely we might approach it by introducing logarithmic terms into the definition of Besov and Sobolev norms.  The results of these investigations are given in Theorem~\ref{T:XY}.

The paper is organised as follows:  The remainder of the introduction is devoted to the introduction of our preferred notations and some background on operator traces. In Section~\ref{S:2} we prove Theorem~\ref{T:main} for $-1\leq s <0$.  As we have explained, this can be done in a very simple self-contained way.

Section~\ref{S:3} begins with the introduction of Besov spaces and then proceeds to the proof of Theorem~\ref{T:Bmain} along the lines laid out above.  The treatment of mKdV and NLS comprises Section~\ref{S:4}.

The arguments of Sections~\ref{S:2} and~\ref{S:3} were worked out during the authors' stay at MSRI in the Fall of 2015 and were first presented at that time, \cite{K:MSRI}.  The arguments of Section~\ref{S:4} were first presented in \cite{K:Bonn}, albeit only to prove Theorem~\ref{T:AKNS Hs} above.

\subsection*{Acknowedgements}  This material is based on work supported by the National Science Foundation under Grant No. 0932078000 while the authors were in residence at the Mathematical Sciences Research Institute in Berkeley, California, during the Fall 2015 semester.  R. K. was additionally supported by NSF grant DMS-1600942 and M. V. by NSF grant DMS-1500707.

\subsection{Notation}
Our conventions for the Fourier transform are as follows:
\begin{align*}
\hat f(\xi) = \tfrac{1}{\sqrt{2\pi}} \int_\R e^{-i\xi x} f(x)\,dx  \qtq{so} f(x) = \tfrac{1}{\sqrt{2\pi}} \int_\R e^{i\xi x} \hat f(\xi)\,d\xi
\end{align*}
for functions on the line and
\begin{align*}
\hat f(\xi) = \int_0^1 e^{- i\xi x} f(x)\,dx \qtq{so} f(x) = \sum_{\xi\in 2\pi\Z} \hat f(\xi) e^{i\xi x}
\end{align*}
for functions on the circle $\R/\Z$.  Concomitant with this, we define
\begin{align*}
\| f\|_{H^{s}(\R)}^2 = \int_\R |\hat f(\xi)|^2 (1+|\xi|^2)^s \,d\xi
\end{align*}
and
\begin{align*}
\| f\|_{H^{s}(\R/\Z)}^2 = \sum_{\xi\in 2\pi\Z} (1+\xi^2)^s |\hat f(\xi)|^2.
\end{align*}

\subsection{Trace and Determinant}

For an operator $A$ on $L^2(\R)$ with continuous integral kernel $K(x,y)$, one may define the trace via
$$
\tr(A) = \int  K(x,x) \,dx,
$$
as was done already by Fredholm.  This definition is somewhat at odds with the trace-ideal theory: (a) trace-class operators need not have continuous kernels and (b) an operator may have a continuous kernel of compact support, yet not be trace class.  See, for example, the discussion on pages~24 and~128 of \cite{MR2154153}.  Nonetheless, this definition is viable for operators that arise as the product of (two or more) Hilbert--Schmidt operators, even though their kernel may not be continuous.  Such operators are automatically of trace class; moreover, the integral over the diagonal has a unique interpretation.  In particular, if $A$ is Hilbert-Schmidt with kernel $K(x,y)\in L^2(\R\times\R)$, then
$$
\tr(A^2) = \iint_{\R^2} K(x,y)K(y,x) \,dx\,dy
$$
is uniquely determined despite the fact that the kernel $K$ is only determined almost everywhere.  Analogously,
\begin{equation}\label{HS norm}
\| A \|_{\mathfrak I_2}^2 =  \tr(A^* A) = \iint_{\R^2} | K(x,y)|^2 \,dx\,dy.
\end{equation}

The estimates in the following lemma may be regarded as special cases of some of the most rudimentary results in the theory of trace ideals (cf. \cite{MR2154153}); nonetheless, we have elected to include proofs for completeness and since the particular results we need admit elementary proofs consonant with the overall spirit of this paper (emphasizing integral kernels rather than operators).

\begin{lemma}\label{L:dumb}
Let $A_i$ denote Hilbert-Schmidt operators on $L^2(\R)$ with integral kernels $K_i(x,y)$.  Then
\begin{align}\label{dumb norm}
\| A_i \|_{\op} &\leq \| A_i \|_{\mathfrak I_2} \\
| \tr( A_1\cdots A_\ell ) | &\leq \prod_{i=1}^\ell \| A_i \|_{\mathfrak I_2}\quad\text{for all integers $\ell\geq 2$.} \label{dumb holder}
\end{align}
\end{lemma}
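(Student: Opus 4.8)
The plan is to prove both estimates by direct manipulation of integral kernels, reducing everything to the Cauchy--Schwarz inequality on $L^2(\R)$ and on $L^2(\R^2)$; no operator-theoretic machinery is needed. I would prove \eqref{dumb norm} first, since the operator-norm bound is the engine driving \eqref{dumb holder}. For \eqref{dumb norm}, given $f\in L^2(\R)$ I write $(A_i f)(x) = \int K_i(x,y)f(y)\,dy$ and apply Cauchy--Schwarz in $y$ to get $|(A_i f)(x)|^2 \leq \|f\|_{L^2}^2 \int |K_i(x,y)|^2\,dy$; integrating in $x$ and recalling \eqref{HS norm} yields $\|A_i f\|_{L^2}\leq \|A_i\|_{\mathfrak I_2}\|f\|_{L^2}$, which is exactly \eqref{dumb norm}.

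For \eqref{dumb holder} I would assemble two ingredients. The first is the case $\ell=2$: the reasoning given in the excerpt for $\tr(A^2)$ applies verbatim to the product $A_1A_2$ of two Hilbert--Schmidt operators, giving $\tr(A_1A_2) = \iint K_1(x,y)K_2(y,x)\,dx\,dy$, whence Cauchy--Schwarz on $L^2(\R^2)$ produces $|\tr(A_1A_2)|\leq \|A_1\|_{\mathfrak I_2}\|A_2\|_{\mathfrak I_2}$. The second ingredient is the submultiplicativity bound $\|AB\|_{\mathfrak I_2}\leq \|A\|_{\op}\|B\|_{\mathfrak I_2}$: for each fixed $z$ the kernel $K_{AB}(x,z)=\int K_A(x,y)K_B(y,z)\,dy$ equals $(Ag_z)(x)$ with $g_z(y):=K_B(y,z)$, so the estimate underlying \eqref{dumb norm} gives $\int |K_{AB}(x,z)|^2\,dx \leq \|A\|_{\op}^2\int|K_B(y,z)|^2\,dy$, and integrating in $z$ finishes it.

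With these in hand the general case follows by grouping and iteration. Setting $Y:=A_2\cdots A_\ell$, which is Hilbert--Schmidt, the $\ell=2$ bound gives $|\tr(A_1\cdots A_\ell)|=|\tr(A_1 Y)|\leq \|A_1\|_{\mathfrak I_2}\|Y\|_{\mathfrak I_2}$. Peeling off factors one at a time via submultiplicativity yields $\|A_2\cdots A_\ell\|_{\mathfrak I_2}\leq \|A_2\|_{\op}\cdots\|A_{\ell-1}\|_{\op}\|A_\ell\|_{\mathfrak I_2}$, and finally \eqref{dumb norm} replaces each $\|A_i\|_{\op}$ by $\|A_i\|_{\mathfrak I_2}$, giving the product $\prod_{i=1}^\ell \|A_i\|_{\mathfrak I_2}$.

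The estimates themselves are immediate once set up, so the only point demanding genuine care — and the step I expect to have to justify rather than compute — is the measure-theoretic bookkeeping flagged in the discussion preceding the lemma: that the composed kernels are well defined almost everywhere, that the integrands lie in $L^1(\R^2)$ so that Fubini applies, and that the diagonal integral representing $\tr(A_1 Y)$ is indeed the trace of the (automatically trace-class) product of the two Hilbert--Schmidt operators $A_1$ and $Y$. Granting these routine points, all inequalities reduce to Cauchy--Schwarz as described.
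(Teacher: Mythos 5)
Your proof is correct, and it runs on the same fuel as the paper's --- integral kernels plus Cauchy--Schwarz, with \eqref{dumb norm} used to convert operator norms of interior factors into Hilbert--Schmidt norms --- but it is packaged differently. The paper proves \eqref{dumb holder} in one shot: it writes the trace as the cyclic multi-integral, views it as $\int \langle K_1(x_1,\cdot),\, A_2\cdots A_{\ell-1} K_\ell(\cdot,x_1)\rangle\,dx_1$, and applies Cauchy--Schwarz once, peeling off $A_1$ and $A_\ell$ symmetrically while the whole middle block is measured in operator norm (see \eqref{not super dumb}); no auxiliary lemmas are needed, and the only Fubini-type regrouping is the one already sanctioned by the paper's definition of the trace for products of Hilbert--Schmidt operators. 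You instead factor the argument through two reusable ingredients --- the $\ell=2$ case and the submultiplicativity bound $\|AB\|_{\mathfrak I_2}\leq \|A\|_{\op}\|B\|_{\mathfrak I_2}$ --- and then peel factors off one side via the grouping $\tr(A_1 Y)$ with $Y=A_2\cdots A_\ell$. Both routes terminate at the identical chain $|\tr(A_1\cdots A_\ell)|\leq \|A_1\|_{\mathfrak I_2}\|A_\ell\|_{\mathfrak I_2}\prod_{i=2}^{\ell-1}\|A_i\|_{\op}$. Your version buys two independently useful facts at the cost of one extra verification you correctly flag: that $\tr(A_1Y)$, computed from the composed kernel $K_Y$, agrees with the defining multi-integral. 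This is indeed routine --- absolute convergence of the full multi-integral follows by running your own Cauchy--Schwarz estimates on the kernels $|K_i|$, after which Tonelli--Fubini justifies the regrouping, and the paper's preamble (trace of a product of two Hilbert--Schmidt operators via the diagonal integral) covers the interpretation of $\tr(A_1Y)$ --- so there is no gap, merely a point the paper's one-step estimate never has to isolate.
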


\begin{proof}
That the Hilbert--Schmidt norm bounds the operator norm follows trivially from the Cauchy--Schwarz inequality:
$$
\biggl| \iint f(x) K(x,y) g(y)\,dx\,dy \biggr|\leq \| K\|_{L^2(\R^2)} \| f\otimes g\|_{L^2(\R^2)} =\| A\|_{\mathfrak I_2} \|f\|_{L^2} \|g\|_{L^2}.
$$

By definition,
\begin{align*}
\tr( A_1\cdots A_\ell ) &= \int\!\!\cdots\!\!\int K_1(x_1,x_2)K_2(x_2,x_3)\cdots K_\ell(x_\ell,x_1)\,dx_\ell\cdots dx_1 \\
&= \int \bigl\langle K(x_1,\cdot), A_2\cdots A_{\ell-1} K(\cdot,x_1)\bigr\rangle \,dx_1.
\end{align*}
Applying the Cauchy--Schwarz inequality, we may then deduce that
\begin{equation}
\begin{aligned}
\text{LHS\eqref{dumb holder}} &\leq \| A_2\cdots A_{\ell-1}\|_\op \int  \| K_1(x_1,z)\|_{L^2_z} \| K_\ell (z, x_1)\|_{L^2_z} dx_1 \\
&\leq \| A_2\cdots A_{\ell-1}\|_\op \| A_1 \|_{\mathfrak I_2} \| A_\ell \|_{\mathfrak I_2}.
\end{aligned}
\label{not super dumb}
\end{equation}
The estimate \eqref{dumb holder} now follows from \eqref{dumb norm}.
\end{proof}

As we shall see in the next section, the $s=-1$ case of Theorem~\ref{T:main} essentially comprises the specialization of the following lemma to a particular choice of kernel.

\begin{lemma}\label{L:C1}
Let $t\mapsto A(t)$ define a $C^1$ curve in $\mathfrak{I}_2$.  Suppose
\begin{align*}
\bigl\| A(t_0) \bigr\|_{\mathfrak I_2} < \tfrac13.
\end{align*}
Then there is a closed neighborhood $I$ of $t_0$ on which the series
\begin{align}\label{Gen alpha}
\alpha(t) := \sum_{\ell=2}^\infty \frac{(-1)^\ell}{\ell} \tr\bigl\{ A(t)^\ell \bigr\}
\end{align}
converges and defines a $C^1$ function with
 \begin{align}\label{Gen alpha dot}
\tfrac{d\ }{dt} \alpha(t) := \sum_{\ell=2}^\infty (-1)^\ell \tr\bigl\{ A(t)^{\ell-1} \tfrac{d\ }{dt} A(t) \bigr\}.
\end{align}
Moreover, if $A(t)$ is self-adjoint, then
\begin{align}\label{comp size}
\tfrac13 \| A(t) \|^2_{\mathfrak I_2} \leq \alpha(t) \leq \tfrac23 \| A(t) \|^2_{\mathfrak I_2} \quad\text{for all $t\in I$.}
\end{align}
\end{lemma}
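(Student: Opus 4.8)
The plan is to establish the three assertions in turn, using Lemma~\ref{L:dumb} as the only analytic input. First I would localize. Since $t\mapsto A(t)$ is continuous into $\mathfrak I_2$ and $\|A(t_0)\|_{\mathfrak I_2}<\tfrac13$, I may choose a compact neighborhood $I$ of $t_0$ on which $\theta:=\sup_{t\in I}\|A(t)\|_{\mathfrak I_2}<\tfrac13$. On $I$, the estimate \eqref{dumb holder} with all factors equal to $A(t)$ gives $|\tr\{A(t)^\ell\}|\le\theta^\ell$, so the general term of \eqref{Gen alpha} is bounded by $\theta^\ell/\ell$. As $\sum_{\ell\ge2}\theta^\ell/\ell<\infty$, the Weierstrass $M$-test yields uniform (indeed absolute) convergence of the series defining $\alpha$ on $I$.

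For the $C^1$ claim I would exploit that, for each fixed $\ell\ge2$, the map $(B_1,\dots,B_\ell)\mapsto\tr(B_1\cdots B_\ell)$ is a bounded $\ell$-linear form on $\mathfrak I_2$, by \eqref{dumb holder}. Composing a bounded multilinear form with the $C^1$ curve $t\mapsto A(t)$ produces a $C^1$ scalar function whose derivative is given by the product rule; after invoking cyclicity of the trace, each of the $\ell$ resulting terms equals $\tr\{A(t)^{\ell-1}\dot A(t)\}$, so that $\ddt\tr\{A(t)^\ell\}=\ell\,\tr\{A(t)^{\ell-1}\dot A(t)\}$ and hence the $\ell$-th summand of \eqref{Gen alpha} has derivative $(-1)^\ell\tr\{A(t)^{\ell-1}\dot A(t)\}$. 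Setting $M:=\sup_{t\in I}\|\dot A(t)\|_{\mathfrak I_2}<\infty$ (finite by continuity of $\dot A$ on the compact set $I$) and applying \eqref{dumb holder} once more bounds this derivative by $M\theta^{\ell-1}$. Since $\sum_{\ell\ge2}M\theta^{\ell-1}<\infty$, the differentiated series \eqref{Gen alpha dot} converges uniformly on $I$, and the standard theorem on term-by-term differentiation of uniformly convergent series certifies that $\alpha$ is $C^1$ with the stated derivative.

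For the comparison \eqref{comp size} I would pass to the spectrum. When $A(t)$ is self-adjoint it is a compact self-adjoint operator with real eigenvalues $\{\lambda_j\}$ obeying $\sum_j\lambda_j^2=\|A(t)\|_{\mathfrak I_2}^2$, and $\tr\{A(t)^\ell\}=\sum_j\lambda_j^\ell$ for every $\ell\ge2$. Because each $|\lambda_j|\le\|A(t)\|_{\op}\le\theta<1$ (using \eqref{dumb norm}), the double sum $\sum_{\ell}\sum_j\ell^{-1}|\lambda_j|^\ell$ is finite, so Fubini permits interchanging the order of summation to obtain the closed form
\begin{align*}
\alpha(t)=\sum_j\Bigl(\lambda_j-\log(1+\lambda_j)\Bigr).
\end{align*}
Thus \eqref{comp size} reduces to the scalar inequality $\tfrac13 x^2\le x-\log(1+x)\le\tfrac23 x^2$, valid for $|x|\le\tfrac13$, applied with $x=\lambda_j$ and summed over $j$.

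The bulk of the work — and the point at which the hypothesis $\|A(t_0)\|_{\mathfrak I_2}<\tfrac13$ is actually consumed — is this last scalar inequality, so I expect the calibration of constants, rather than any conceptual difficulty, to be the main obstacle. Writing $x-\log(1+x)=\sum_{\ell\ge2}\frac{(-1)^\ell}{\ell}x^\ell=\tfrac12 x^2\bigl(1-\tfrac23 x+\tfrac12 x^2-\cdots\bigr)$, one checks that the ratio $g(x):=\bigl(x-\log(1+x)\bigr)/x^2$ is strictly decreasing on $(-1,\infty)$ (its derivative has the sign of $-x^2/(1+x)^2$ after clearing $x^3$), so on $[-\tfrac13,\tfrac13]$ it attains its extreme values at the endpoints. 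The binding constraint is the upper bound at $x=-\tfrac13$, where $g(-\tfrac13)=9\bigl(\log\tfrac32-\tfrac13\bigr)\approx0.65<\tfrac23$, while at $x=+\tfrac13$ one finds $g(\tfrac13)\approx0.41>\tfrac13$; the threshold $\tfrac13$ is precisely what keeps the upper ratio below $\tfrac23$. Summing over $j$ then yields \eqref{comp size}.
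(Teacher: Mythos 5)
Your proof is correct, and your treatment of convergence and of term-by-term differentiation is essentially the paper's (the paper is terser, disposing of the differentiation step by citing the standard theorem on uniformly convergent series of derivatives; your multilinearity argument fills in the same content). Where you take a genuinely different route is the comparison \eqref{comp size}: the paper never diagonalizes. It simply estimates the tail in absolute value, $\bigl|\alpha(t)-\tfrac12\tr\{A(t)^2\}\bigr|\leq\sum_{\ell\geq3}\tfrac1\ell\|A(t)\|_{\mathfrak I_2}^\ell\leq\tfrac16\|A(t)\|_{\mathfrak I_2}^2$, using \eqref{dumb holder} and $\|A(t)\|_{\mathfrak I_2}\leq\tfrac13$ on $I$, and then reads off \eqref{comp size} from the identity $\tr\{A^2\}=\|A\|_{\mathfrak I_2}^2$, which is the only place self-adjointness enters. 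Your route --- spectral theorem for compact self-adjoint operators, a Fubini interchange, the closed form $\alpha(t)=\sum_j\bigl(\lambda_j-\log(1+\lambda_j)\bigr)$ (i.e., $-\log{\det}_2(1+A)$ evaluated on the spectrum), and the scalar inequality $\tfrac13x^2\leq x-\log(1+x)\leq\tfrac23x^2$ on $[-\tfrac13,\tfrac13]$ --- is valid as written: the ratio $g(x)=(x-\log(1+x))/x^2$ is indeed strictly decreasing on $(-1,\infty)$, and your endpoint values $g(-\tfrac13)\approx0.649<\tfrac23$ and $g(\tfrac13)\approx0.411>\tfrac13$ check out. What your version buys is sharpness and insight: it shows the constants $\tfrac13,\tfrac23$ have slack at the threshold, and identifies the binding case as eigenvalues near $-\tfrac13$, explaining the asymmetry of the two bounds. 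What the paper's version buys is economy and robustness of method: it avoids spectral theory altogether, in keeping with the paper's deliberately elementary, kernel-level style, and its tail-in-absolute-value estimate is precisely the device that is recycled later where diagonalization is unavailable --- in Proposition~\ref{P:D} and throughout Section~\ref{S:4}, where the underlying operator is not self-adjoint and \eqref{comp size} itself fails.
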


\begin{proof}
We choose as $I$ any interval containing $t_0$ on which
\begin{align}\label{I hype}
\bigl\| A(t) \bigr\|_{\mathfrak I_2} \leq \tfrac13.
\end{align}

The estimate \eqref{dumb holder} shows that the series \eqref{Gen alpha} and \eqref{Gen alpha dot} converge as soon as
$$
\bigl\| A(t) \bigr\|^2_{\mathfrak I_2} < 1,
$$
and so throughout the interval $I$.  Because of our stronger hypothesis \eqref{I hype}, we may even conclude
\begin{equation}
 \Bigl| \alpha(t) -  \tfrac12\tr\Bigl\{ A(t)^2 \Bigr\} \Bigr| \leq  \sum_{\ell=3}^\infty \tfrac{1}{\ell} \| A(t) \|^\ell_{\mathfrak I_2} \leq \tfrac16 \| A(t) \|^2_{\mathfrak I_2}.
\end{equation}
Equation \eqref{comp size} is just a recapitulation of this.

The uniform convergence exhibited above also guarantees that $\alpha(t)$ is differentiable and that the series \eqref{Gen alpha dot} converges to its derivative; see, for example \cite[\S9.10]{MR0344384}.
\end{proof}

\section{First conservation laws for KdV}\label{S:2}

\begin{prop}\label{P:free}  The free resolvent admits the following explicit kernel:
\begin{align}
R_0(x,y;-\kappa^2) &= \tfrac{1}{2\kappa} e^{-\kappa|x-y|} \quad\text{on $\R$} \label{R resolvent}\\
R_0(x,y;-\kappa^2) &= \tfrac{1}{2\kappa} (1-e^{-\kappa})^{-1} \bigl[ e^{-\kappa|x-y|} + e^{-\kappa+\kappa|x-y|}\bigr]\quad\text{on $\R/\Z$} \label{S resolvent}
\end{align}
where, in the circle case, $|x-y|=\dist(x-y,\Z)$.  Correspondingly,
\begin{align}
\Bigl\| \sqrt{R_0}\, q\, \sqrt{R_0} \Bigr\|^2_{\mathfrak I_2(\R)} &= \frac1\kappa \int \frac{|\hat q(\xi)|^2}{\xi^2+4\kappa^2}\,d\xi \label{R I2}\\
\Bigl\| \sqrt{R_0}\, q\, \sqrt{R_0} \Bigr\|^2_{\mathfrak I_2(\R/\Z)} &= \frac{2 e^{-\kappa}}{(1-e^{-\kappa})^2} \frac{|\hat q(0)|^2}{4\kappa^2} \notag\\
&\quad + \frac{1-e^{-2\kappa}}{\kappa(1-e^{-\kappa})^2}\sum_{\xi\in2\pi\Z} \frac{|\hat q(\xi)|^2}{\xi^2+4\kappa^2} \label{C I2}
\end{align}
for any $q\in H^{-1}$.
\end{prop}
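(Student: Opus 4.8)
The plan is to first pin down the resolvent kernels \eqref{R resolvent} and \eqref{S resolvent}, and then feed them into a trace computation for the Hilbert--Schmidt norms. For the line, the kernel of $R_0=(-\partial_x^2+\kappa^2)^{-1}$ is the Green's function: for fixed $y$ it solves $(-\partial_x^2+\kappa^2)G=\delta_y$, so off the diagonal it is a combination of $e^{\pm\kappa x}$; imposing decay as $|x|\to\infty$, continuity at $x=y$, and the unit jump in $\partial_x G$ forced by the delta produces $\tfrac1{2\kappa}e^{-\kappa|x-y|}$ (equivalently, this is the inverse Fourier transform of the symbol $(\xi^2+\kappa^2)^{-1}$). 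On $\R/\Z$ I would obtain \eqref{S resolvent} by periodizing, summing the whole-line kernel over lattice translates, $\tfrac1{2\kappa}\sum_{n\in\Z}e^{-\kappa|x-y-n|}$ (with $|\cdot|$ the absolute value on $\R$), and evaluating the two resulting geometric series (one over $n\le 0$, one over $n\ge 1$). The common ratio $e^{-\kappa}$ makes each series sum to a multiple of $(1-e^{-\kappa})^{-1}$, which is exactly the prefactor in \eqref{S resolvent}; the invariance of the bracketed expression under $|x-y|\mapsto 1-|x-y|$ then confirms that it depends only on $\dist(x-y,\Z)$.

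For the norms, the key reduction is to avoid the (inexplicit) kernel of $\sqrt{R_0}$. Writing $A=\sqrt{R_0}\,q\,\sqrt{R_0}$ and using that $\sqrt{R_0}$ is bounded and self-adjoint (since $\kappa>0$ makes $R_0$ positive) while $q$ acts as a bounded multiplication operator when $q$ is Schwartz, I would compute
\[
\|A\|_{\mathfrak{I}_2}^2=\tr(A^*A)=\tr\bigl(\sqrt{R_0}\,\bar q\,R_0\,q\,\sqrt{R_0}\bigr)=\tr\bigl(\bar q\,R_0\,q\,R_0\bigr),
\]
the last step being cyclicity of the trace. This is legitimate because $\bar q R_0$ and $qR_0$ are each Hilbert--Schmidt: their kernels, e.g.\ $\bar q(x)R_0(x,y)$, lie in $L^2$ thanks to the exponential decay of $R_0(x,y)$ in $|x-y|$ against the decay of $q$, so $\bar q R_0 q R_0$ is a product of Hilbert--Schmidt operators and the trace is the integral over the diagonal. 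Reading off the kernel of this product (cf.\ the discussion preceding \eqref{HS norm}) and using $R_0(x,y)=R_0(y,x)\in\R$ gives
\[
\|A\|_{\mathfrak{I}_2}^2=\iint \bar q(x)\,q(y)\,R_0(x,y)^2\,dx\,dy .
\]
For $q\in H^{-1}$ the identity then follows by approximation once the right-hand side is seen to be finite.

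From here the two geometries diverge only in how one evaluates this double integral. On the line $R_0(x,y)^2=\tfrac1{4\kappa^2}e^{-2\kappa|x-y|}$, so the integral is a convolution against $e^{-2\kappa|\cdot|}$; Parseval together with $\widehat{e^{-2\kappa|\cdot|}}(\xi)\propto(\xi^2+4\kappa^2)^{-1}$ yields \eqref{R I2} immediately. On the circle $R_0(x,y)^2$ is a function of $u=x-y$ alone, so Parseval on $\R/\Z$ reduces the norm to $\sum_{\xi\in2\pi\Z}|\hat q(\xi)|^2\,c(\xi)$, where $c(\xi)=\int_0^1 e^{-i\xi u}R_0(u)^2\,du$ are the Fourier coefficients of the squared kernel.

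Computing these coefficients is the main point, and the only delicate step. Squaring the bracket in \eqref{S resolvent} produces three terms: $e^{-2\kappa|u|}$, $e^{-2\kappa+2\kappa|u|}$, and a \emph{constant} cross term $2e^{-\kappa}$. Against $e^{-i\xi u}$ the constant integrates to zero for every nonzero $\xi\in2\pi\Z$ and contributes only at $\xi=0$; this is exactly the origin of the isolated first term in \eqref{C I2}. For the two genuine exponentials I would evaluate $\int_{-1/2}^{1/2}\cos(\xi u)\,e^{\mp 2\kappa|u|}\,du$ in closed form; at $\xi=2\pi k$ the boundary contributions each carry a factor $(-1)^k$, but on adding the two integrals these alternating signs cancel, leaving the clean numerator $1-e^{-2\kappa}$ over the denominator $\xi^2+4\kappa^2$. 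Combining this with the common prefactor $\tfrac1{4\kappa^2}(1-e^{-\kappa})^{-2}$ reproduces the sum in \eqref{C I2}. I expect the only real bookkeeping to be this cancellation of the $(-1)^k$ terms together with the separate accounting of the zero mode; once both are handled, \eqref{C I2} falls out.
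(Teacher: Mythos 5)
Your proposal is correct and follows essentially the same route as the paper: both derive \eqref{S resolvent} by periodizing the line kernel (method of images), reduce the Hilbert--Schmidt norm to $\iint \bar q(x)\,q(y)\,R_0(x,y)^2\,dx\,dy$ for Schwartz $q$, pass to the Fourier side, and extend to $q\in H^{-1}$ by approximation. The only cosmetic difference is in how the squared kernel is evaluated: the paper uses the pointwise identities \eqref{R Rsq} and \eqref{C Rsq}, recognizing $R_0(x,y;-\kappa^2)^2$ as a multiple of $R_0(x,y;-4\kappa^2)$ (plus a constant in the circle case, which produces the isolated zero-mode term), whereas you compute the Fourier coefficients of the squared kernel by direct integration --- your cancellation of the $(-1)^k$ boundary terms is exactly the Fourier-side verification of \eqref{C Rsq}.
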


\begin{proof}
Let us first consider the line case. The formula \eqref{R resolvent} is well-known and can be easily confirmed.  This formula shows us that
\begin{align}\label{R Rsq}
[ R_0(x,y;-\kappa^2) ]^2 &= \tfrac{1}{\kappa} R_0(x,y;-4\kappa^2)
\end{align}
and correspondingly,
\begin{align*}
\Bigl\| \sqrt{R_0}\, q\, \sqrt{R_0} \Bigr\|^2_{\mathfrak I_2(\R)} &= \frac1{\kappa} \iint q(x)R_0(x,y;-4\kappa^2)q(y)\,dx\,dy = \text{RHS\eqref{R I2}},
\end{align*}
at least for Schwartz $q$.  The result for general $q$ follows by approximation.

The identity \eqref{S resolvent} is easily verified.  One quick way of deriving this formula is from \eqref{R resolvent} via the method of images, according to which,
$$
R_0(x,y;-\kappa^2) = \sum_{\ell\in\Z} \tfrac{1}{2\kappa} e^{-\kappa|x-y-\ell|} \quad\text{on $\R/\Z$}.
$$

The analogue of \eqref{R Rsq} in the circle setting is
\begin{align}\label{C Rsq}
[ R_0(x,y;-\kappa^2) ]^2 &= \frac{1-e^{-2\kappa}}{\kappa(1-e^{-\kappa})^2} R_0(x,y;-4\kappa^2) + \frac{e^{-\kappa}}{2\kappa^2(1-e^{-\kappa})^2},
\end{align}
from which \eqref{C I2} readily follows.
\end{proof}

To facilitate treating the line and circle cases simultaneously, we recast the preceding exact formulae in a simpler form:

\begin{corollary}\label{C:I2} If $q\in H^{-1}$ on $\R$ or $\R/\Z$ and  $\kappa\geq 1$, then
\begin{gather}\label{H-1kappa}
\tfrac{1}{\kappa}  \bigl\langle q, (-\partial_x^2+4\kappa^2)^{-1} q\bigr\rangle \leq   \bigl\| \sqrt{R_0}\, q\, \sqrt{R_0}\, \bigr\|^2_{\mathfrak I_2} \leq \tfrac{5}{\kappa} \bigl\langle q, (-\partial_x^2+4\kappa^2)^{-1} q\bigr\rangle
\end{gather}
and consequently,
\begin{gather}
\label{H-1normal}
\tfrac1{4\kappa^3} \| q \|_{H^{-1}}^2 \leq   \bigl\| \sqrt{R_0}\, q\, \sqrt{R_0}\, \bigr\|^2_{\mathfrak I_2} \leq \tfrac{5}{\kappa} \| q \|_{H^{-1}}^2.
\end{gather}
\end{corollary}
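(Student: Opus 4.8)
The plan is to derive Corollary~\ref{C:I2} directly from the exact Hilbert--Schmidt formulae \eqref{R I2} and \eqref{C I2} of Proposition~\ref{P:free}, treating the two geometries in parallel by exhibiting uniform (in $\kappa\geq 1$) upper and lower bounds on the $\kappa$-dependent prefactors. First I would observe that on the line, \eqref{R I2} already reads precisely as $\tfrac1\kappa\langle q,(-\partial_x^2+4\kappa^2)^{-1}q\rangle$, since the Fourier multiplier of $(-\partial_x^2+4\kappa^2)^{-1}$ is $(\xi^2+4\kappa^2)^{-1}$; thus \eqref{H-1kappa} holds on $\R$ with both constants equal to $1$, comfortably inside the claimed range $[1,5]$. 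So the real content of \eqref{H-1kappa} is the circle case.

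For the circle, I would rewrite the paired quadratic form $\langle q,(-\partial_x^2+4\kappa^2)^{-1}q\rangle$ on the Fourier side as $\sum_{\xi\in2\pi\Z}|\hat q(\xi)|^2/(\xi^2+4\kappa^2)$, so that comparing it to \eqref{C I2} reduces to bounding the two scalar coefficients appearing there. The coefficient of the sum over $\xi\neq 0$ is $(1-e^{-2\kappa})/[\kappa(1-e^{-\kappa})^2]$, which equals $(1+e^{-\kappa})/[\kappa(1-e^{-\kappa})]$; multiplying by $\kappa$, I would check that the function $(1+e^{-\kappa})/(1-e^{-\kappa})$ is decreasing in $\kappa$ and lies in $(1,(1+e^{-1})/(1-e^{-1})]$ for $\kappa\geq 1$, with the upper value numerically below $5$. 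The extra $\xi=0$ term in \eqref{C I2}, namely $\tfrac{2e^{-\kappa}}{(1-e^{-\kappa})^2}\cdot\tfrac{|\hat q(0)|^2}{4\kappa^2}$, is nonnegative, so it only helps the lower bound; for the upper bound I must absorb it into the $\xi=0$ summand $|\hat q(0)|^2/(4\kappa^2)$ of the paired form, which amounts to verifying that $\tfrac1\kappa + \tfrac{2e^{-\kappa}}{(1-e^{-\kappa})^2}\cdot\tfrac1\kappa \cdot\tfrac{1}{(\text{adjust})}$ stays below $5/\kappa$; concretely, comparing the total $\xi=0$ coefficient in \eqref{C I2} against $\tfrac5\kappa\cdot\tfrac{1}{4\kappa^2}$ and checking the resulting elementary inequality for $\kappa\geq 1$. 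The lower bound $\tfrac1\kappa$ follows since each coefficient exceeds $\tfrac1\kappa$ times the matching coefficient of the paired form (discarding the nonnegative $\xi=0$ bonus term).

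Having established \eqref{H-1kappa}, I would deduce \eqref{H-1normal} by comparing the weight $(\xi^2+4\kappa^2)^{-1}$ against the $H^{-1}$ weight $(1+|\xi|^2)^{-1}$, uniformly in $\kappa\geq 1$. For the upper bound in \eqref{H-1normal}, since $\kappa\geq 1$ gives $\xi^2+4\kappa^2\geq \xi^2+4\geq 4(1+\xi^2)/1\geq \ldots$, I would note $\xi^2+4\kappa^2\geq 1+\xi^2$ (as $4\kappa^2\geq 4\geq 1$ and $\xi^2\geq\xi^2$), whence $(\xi^2+4\kappa^2)^{-1}\leq (1+\xi^2)^{-1}$ and thus $\langle q,(-\partial_x^2+4\kappa^2)^{-1}q\rangle\leq \|q\|_{H^{-1}}^2$, yielding the constant $5/\kappa$. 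For the lower bound, I would use $\xi^2+4\kappa^2\leq 4\kappa^2(1+\xi^2)$ (valid since $4\kappa^2\geq 1$ and $\xi^2\leq 4\kappa^2\xi^2$), so $(\xi^2+4\kappa^2)^{-1}\geq \tfrac{1}{4\kappa^2}(1+\xi^2)^{-1}$, giving $\langle q,(-\partial_x^2+4\kappa^2)^{-1}q\rangle\geq \tfrac{1}{4\kappa^2}\|q\|_{H^{-1}}^2$ and hence the constant $\tfrac{1}{4\kappa^3}$. The main obstacle I anticipate is purely bookkeeping: pinning down the numerical constants in the circle case so that the same pair $[1,5]$ works uniformly for all $\kappa\geq1$, in particular confirming that the worst case occurs at the endpoint $\kappa=1$ and that the resulting elementary exponential inequalities indeed respect the bound $5$; none of this is deep, but it requires care to state the monotonicity claims correctly rather than relying on asymptotics.
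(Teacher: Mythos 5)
Your proposal is correct and is precisely the argument the paper intends: Corollary~\ref{C:I2} is stated there without proof as a direct recasting of the exact formulae \eqref{R I2} and \eqref{C I2}, and your coefficient bookkeeping checks out, since on the line the two sides of \eqref{H-1kappa} agree identically, while on the circle the sum coefficient $\tfrac{1+e^{-\kappa}}{\kappa(1-e^{-\kappa})}$ exceeds $\tfrac1\kappa$ and the worst-case total $\xi=0$ coefficient, attained at $\kappa=1$ by monotonicity, is $\tfrac{2e^{-1}}{(1-e^{-1})^2}+\tfrac{1+e^{-1}}{1-e^{-1}}\approx 4.01<5$. Your deduction of \eqref{H-1normal} from the pointwise multiplier inequalities $1+\xi^2\leq \xi^2+4\kappa^2\leq 4\kappa^2(1+\xi^2)$ for $\kappa\geq1$ is likewise the intended route (the abandoned fragment ``$\xi^2+4\geq 4(1+\xi^2)$'' in your draft is false but harmless, as you replace it with the correct inequality).
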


We are now ready to show conservation of the logarithm of the renormalized perturbation determinant:

\begin{prop}\label{P:KdVcons}
Let $q(t)$ be a Schwartz solution to KdV.  Then
\begin{equation*}
\ddt \alpha(\kappa;q(t)) = 0
\end{equation*}
for all $\kappa$ obeying $\kappa \geq 1 + 45 \| q(t) \|_{H^{-1}}^2$.
\end{prop}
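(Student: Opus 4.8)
My plan is to differentiate the series \eqref{KdValpha} term by term and then exhibit an exact cancellation forced by the structure of \eqref{KdV}. I would begin by applying Lemma~\ref{L:C1} to the curve $A(t)=\sqrt{R_0}\,q(t)\,\sqrt{R_0}$. This curve is $C^1$ in $\mathfrak{I}_2$ because $q$ is a Schwartz solution, and the required smallness holds since the upper bound in \eqref{H-1normal} gives $\|A(t)\|_{\mathfrak{I}_2}^2\le\tfrac5\kappa\|q(t)\|_{H^{-1}}^2<\tfrac19$ as soon as $\kappa\ge 1+45\|q(t)\|_{H^{-1}}^2$; the hypothesis $\kappa\ge1$ needed for Corollary~\ref{C:I2} is then automatic. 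Lemma~\ref{L:C1} supplies formula \eqref{Gen alpha dot}, and using cyclicity of the trace (Lemma~\ref{L:dumb}) together with $\sqrt{R_0}\sqrt{R_0}=R_0$ I would recast it as
\[
\ddt\alpha(\kappa;q(t))=\sum_{\ell=2}^\infty(-1)^\ell\tr\bigl\{(R_0q)^{\ell-1}R_0\,\dot q\bigr\},
\]
into which I substitute $\dot q=-q'''+6qq'$.

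The crux is to show this sum vanishes by matching the dispersive contributions (those carrying $-q'''$) against the nonlinear ones (those carrying $6qq'=3(q^2)'$). Expanding each trace as an iterated integral of kernels, the factor $\dot q$ occupies a single cyclic variable whose only other occurrences are the two free resolvents flanking its slot. In the dispersive term at level $\ell$ I would integrate by parts three times in that variable, transferring $\partial_x^3$ onto this pair of resolvents. The one analytic input is the identity $R_0^{-1}=-\partial_x^2+\kappa^2$, that is,
\[
\partial_x^2 R_0(x,y)=\kappa^2R_0(x,y)-\delta(x-y),
\]
which holds verbatim in both geometries and is the sole place the explicit kernels of Proposition~\ref{P:free} enter. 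Whenever two or three derivatives land on one resolvent, the resulting $\delta$ (or $\delta'$) collapses an adjacent resolvent; this contracts the level-$\ell$ dispersive term into a level-$(\ell-1)$ expression in which two potentials become neighbors and one is differentiated --- precisely a $(q^2)'$ factor --- reproducing, with opposite sign, the nonlinear term at level $\ell-1$. The lowest dispersive term ($\ell=2$) has no partner and must vanish on its own, which it does because $\partial_x^3[R_0(x,y)^2]$ is antisymmetric in $x\leftrightarrow y$ while $q(x)q(y)$ is symmetric; the analogous smooth remainders at higher $\ell$ cancel for the same reason. Summing the two series, everything cancels.

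Since the displayed identity for $\partial_x^2R_0$ is identical on $\R$ and on $\R/\Z$, this computation dispatches both geometries at once. The main obstacle I anticipate is the bookkeeping of the matching: tracking with what multiplicities and signs the $\delta$-contractions of each level-$\ell$ dispersive term reproduce the level-$(\ell-1)$ nonlinear term, and confirming that the non-contracted remainders genuinely cancel rather than merely being of lower order. Interwoven with this is a question of legitimacy: the bare operators $R_0\dot q$, $\partial_x$, and $(L+\kappa^2)^{-1}$ are not of trace class, so every appeal to cyclicity and to integration by parts must be organized so that only products of at least two Hilbert--Schmidt operators (governed by Lemma~\ref{L:dumb}) or absolutely convergent diagonal integrals ever appear; the strict bound $\|A(t)\|_{\mathfrak{I}_2}<\tfrac13$ furnishes the absolute convergence that justifies rearranging the resulting double series.

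As a consistency check one may instead sum before differentiating: $\sum_{\ell\ge2}(-1)^\ell(R_0q)^{\ell-1}R_0=R_0-(L+\kappa^2)^{-1}$ with $L=-\partial_x^2+q$, whence $\ddt\alpha=\int[R_0(x,x)-G(x,x)]\dot q\,dx$ for $G=(L+\kappa^2)^{-1}$. The constant diagonal of $R_0$ pairs with $\dot q$ to zero, and the remaining integral against the diagonal Green's function $g(x)=G(x,x)$ vanishes upon invoking its classical third-order equation $g'''=4(q+\kappa^2)g'+2q'g$. This route is tidy but buries the same cancellation inside that ODE, so I would present the termwise matching as the primary argument.
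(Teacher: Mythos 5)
Your plan is, in substance, the paper's own proof. The paper likewise invokes Lemma~\ref{L:C1} via \eqref{H-1normal} (with the same arithmetic: $\tfrac5\kappa\|q(t)\|_{H^{-1}}^2<\tfrac19$ once $\kappa\geq 1+45\|q(t)\|_{H^{-1}}^2$), reduces to the telescoping identity \eqref{E:telescope} matching the dispersive term at level $\ell$ against the nonlinear term at level $\ell-1$, and treats the leftover lowest-order trace separately. Its expansion of $-q'''=-[\partial,[\partial,[\partial,q]]]$ followed by cycling the trace is exactly your triple integration by parts with $\partial_x^2R_0(x,y)=\kappa^2R_0(x,y)-\delta(x-y)$, written in operator shorthand --- the paper says as much immediately after the proof. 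Your disposal of the unpaired $\ell=2$ dispersive term by oddness of $\partial_x^3\bigl[R_0(x,y)^2\bigr]$ is correct and is an equivalent substitute for the paper's route, which instead telescopes it down to $\tr\bigl\{R_0\,6qq'\bigr\}$ and kills that via \eqref{E:eyepiece}, i.e.\ constancy of the free Green's function on the diagonal together with $6qq'$ being a complete derivative.

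The one claim that is wrong as stated --- and it is precisely the point you flagged as your main obstacle --- is that the non-contracted remainders at higher $\ell$ ``cancel for the same reason'' as at $\ell=2$. They do not: the level-$\ell$ remainder is $-4\kappa^2\tr\bigl\{(R_0q)^{\ell-1}R_0[\partial,q]\bigr\}=-4\kappa^2\tr\bigl\{(R_0q)^{\ell-1}R_0\,q'\bigr\}$, and for $\ell\geq3$ its integrand is \emph{symmetric}, not antisymmetric, under reversal of the cyclic chain of variables; the reflection $x\mapsto-x$ only shows that this functional is odd under $q(x)\mapsto q(-x)$, which yields nothing for general $q$. The correct mechanism is the paper's: commute $\partial$ past $R_0$ and cycle the trace, so that the two halves of the commutator coincide and the difference is the trace of the zero operator; equivalently, $\tr\bigl\{(R_0q)^{\ell-1}R_0\,q'\bigr\}$ is $\tfrac1\ell$ times the $s$-derivative at $s=0$ of $\tr\bigl\{(R_0\,q(\cdot+s))^{\ell}\bigr\}$, which is constant by translation invariance of $R_0$. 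With that repair your termwise matching closes in both geometries. Your consistency check via $R_0-(L+\kappa^2)^{-1}$ is also sound, but needs one more line than you allow it: substituting $g'''=4(q+\kappa^2)g'+2q'g$ into $-\int g\,\dot q\,dx$ leaves the residue $4\kappa^2\int qg'\,dx$, which is not manifestly zero; it vanishes because integrating the same ODE over $\R$ (or the circle) gives $4\int qg'+2\int q'g=0$, and combined with $\int q'g=-\int qg'$ this forces $\int qg'=0$.
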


\begin{remark}
As the perturbation determinant is an analytic function of $\kappa$ in the upper half-plane, constancy extends to this whole region.
\end{remark}

\begin{proof}
The bounds \eqref{H-1normal} show that Lemma~\ref{L:C1} applies.  Thus
\begin{align}\label{dt KdValpha}
\frac{d\ }{dt} \alpha(\kappa;q(t)) = \sum_{\ell=2}^\infty (-1)^\ell \tr\Bigl\{ \bigl( \sqrt{R_0}\, q\, \sqrt{R_0}\bigr)^{\ell-1} \sqrt{R_0}\, \tfrac{dq}{dt}\, \sqrt{R_0} \Bigr\}.
\end{align}

In view of the above and the fact that $q(t)$ is Schwartz, it suffices to show the following:
\begin{align}
\tr\Bigl\{(R_0 q)^{l-1} R_0  \bigl(-q''')\Bigr\}
	 &= \tr\Bigl\{(R_0 q)^{l-2} R_0\bigl(6 q q'\bigr)\Bigr\}, \quad\forall \ell\geq 2\label{E:telescope}\\
\tr\bigl\{  R_0 \bigl(6 q q'\bigr)\bigr\}&=0, \label{E:eyepiece}
\end{align}
which is what we will do.

Just as $q'=[\partial,q]$, so
\begin{align*}
-q''' &= -[\partial,[\partial,[\partial,q]]] = -[\partial,\partial^2 q + q\partial^2 - 2\partial q\partial] \\
&= (-\partial^2+\kappa^2)q'+ q' (-\partial^2+\kappa^2) - 2(-\partial^2+\kappa^2)q\partial +  2\partial q(-\partial^2+\kappa^2) \\
&\quad - 4\kappa^2[\partial,q].
\end{align*}
Substituting this into LHS\eqref{E:telescope} and cycling the trace yields
\begin{align*}
\text{LHS\eqref{E:telescope}} &= \tr\Bigl\{ (R_0 q)^{l-2} R_0\bigl(2qq'+2[\partial,q^2])\Bigr\}
    - 4\kappa^2 \tr\Bigl\{ (R_0 q)^{l-1} R_0  [\partial,q] \Bigr\} \\
&= \text{RHS\eqref{E:telescope}} - 0.
\end{align*}
To see that the second trace vanishes, one should commute $R_0$ and $\partial$ and then cycle the trace. One is left taking the trace of the zero operator.

Writing the trace in \eqref{E:eyepiece} as an integral of the kernel over the diagonal, the requisite vanishing follows from the constancy of $G_0(x,x)$ and the fact that $6qq'$ is a complete derivative.
\end{proof}

We have several other proofs of this proposition, but felt this one is the most elementary.  While we speak of commutators and cycling the trace, this merely represents a compact means of expressing more elementary operations, such as integration by parts and the application of Fubini's theorem.

It is tempting to believe that this proposition might follow simply from \eqref{unitary} by some abstract means, or more generally, that (when defined) the perturbation determinant between unitarily equivalent operators is unity.  This is not true.  We will now demonstrate this fallacy in a manner relevant to the KdV hierarchy with step-like initial data:  Let $v(x)$ be a function for which $v'(x)<0$ and belongs to Schwartz class and define
$$
q(t,x) = v(x+t),
$$
which is the evolution associated to the (commuting) Hamiltonian $\int \frac12|q|^2\,dx$.  Then despite the fact that the Schr\"odinger operators
$$
-\partial^2 + q(t,x)
$$
are unitarily equivalent for all values of $t$, the associated perturbation determinant obeys
\begin{align*}
\log\det\bigl[ 1 + (-\partial^2 + q(0) + \kappa^2)^{-1/2}(q(t)-q(0))(-\partial^2 + q(0) + \kappa^2)^{-1/2}\bigr] >0
\end{align*}
for $t>0$ and $\kappa$ sufficiently large, because every term in the associated series is positive.

The foregoing leads immediately to the following special cases of Theorem~\ref{T:main}; we will take up the remaining cases and the study of Besov norms in the next section.

\begin{theorem}\label{s<0} Let $q(t)$ be a Schwartz solution to KdV on $\R$ or $\R/\Z$.  Then
\begin{equation}\label{Key Est}
\Bigl\| \sqrt{R_0}\, q(t)\, \sqrt{R_0} \Bigr\|^2_{\mathfrak I_2} \leq 2 \Bigl\| \sqrt{R_0}\, q(0)\, \sqrt{R_0} \Bigr\|^2_{\mathfrak I_2} < \tfrac19
\end{equation}
for all $\kappa\geq 1 + 90 \| q(0) \|_{H^{-1}}^2$.  Moreover,
$$
\| q(t) \|_{H^{s}} \lesssim  \bigl(1 + \| q(0) \|_{H^{-1}}^2 \bigr)^{|s|} \| q(0) \|_{H^{s}},
$$
which shows that Theorem~\ref{T:main} holds for $-1\leq s<0$.
\end{theorem}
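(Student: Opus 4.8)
The plan is to prove the Key Estimate \eqref{Key Est} first, by a continuity (bootstrap) argument resting on the conservation of $\alpha$ proved in Proposition~\ref{P:KdVcons}, and then to deduce the Sobolev bounds by integrating \eqref{Key Est} against an appropriate power of $\kappa$. Throughout, fix $\kappa\geq\kappa_0:=1+90\|q(0)\|_{H^{-1}}^2$ and abbreviate $A(t)=\sqrt{R_0}\,q(t)\,\sqrt{R_0}$ and $f(t)=\bigl\|A(t)\bigr\|_{\mathfrak I_2}^2$. From \eqref{H-1normal} and the definition of $\kappa_0$ one computes
\begin{align*}
f(0)\leq\tfrac{5}{\kappa}\|q(0)\|_{H^{-1}}^2\leq\frac{5\|q(0)\|_{H^{-1}}^2}{1+90\|q(0)\|_{H^{-1}}^2}<\tfrac{1}{18}.
\end{align*}
Since $q(\cdot)$ is a Schwartz solution, $t\mapsto q(t)$ is continuous into $H^{-1}$ and hence, by Corollary~\ref{C:I2}, $A(\cdot)$ is a $C^1$ curve in $\mathfrak I_2$ and $t\mapsto f(t)$ is continuous. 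I would then let $I$ be the connected component of $\{t:f(t)<\tfrac19\}$ containing $0$; it is open and nonempty.

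On $I$ we have $\|A(t)\|_{\mathfrak I_2}<\tfrac13$, so Lemma~\ref{L:C1} guarantees that $\alpha(\kappa;q(t))$ is a $C^1$ function of $t$ whose derivative is the convergent series \eqref{Gen alpha dot}, namely \eqref{dt KdValpha}. The algebraic trace identities \eqref{E:telescope}--\eqref{E:eyepiece}, established in the proof of Proposition~\ref{P:KdVcons} and valid for every Schwartz $q$, force this derivative to vanish; hence $\alpha(\kappa;q(t))=\alpha(\kappa;q(0))$ throughout $I$. This is the one step demanding care: conservation must be applied on $I$, where only $f$—and not the full $H^{-1}$ norm—is controlled. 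The resolution is that \eqref{dt KdValpha} already vanishes as soon as its defining series converges, which holds once $f(t)<1$; the stronger threshold in Proposition~\ref{P:KdVcons} is merely a convenient sufficient condition. Feeding this into the comparison \eqref{comp size} gives $\tfrac13 f(t)\leq\alpha(\kappa;q(0))\leq\tfrac23 f(0)$, so $f(t)\leq 2f(0)<\tfrac19$ on $I$. This strict bound prevents $f$ from attaining $\tfrac19$, so $I$ has no finite endpoint and $I=\R$. Thus $f(t)\leq 2f(0)<\tfrac19$ for every $t$ and every $\kappa\geq\kappa_0$, which is precisely \eqref{Key Est}.

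For the Sobolev bound with $-1<s<0$, the heuristic is that $\kappa\mapsto f(t,\kappa)$ resolves $q(t)$ by frequency, so that the $H^s$ norm is recovered by integrating in $\kappa$. Using \eqref{H-1kappa}, Tonelli, and the elementary weight computation $\int_{\kappa_0}^\infty\frac{\kappa^{2s+1}}{\xi^2+4\kappa^2}\,d\kappa$ being comparable (with $s$-dependent constants) to $(\kappa_0^2+\xi^2)^s$, one obtains
\begin{align*}
\int_{\kappa_0}^\infty\kappa^{2s+2}f(t,\kappa)\,d\kappa\sim\int|\widehat{q(t)}(\xi)|^2(\kappa_0^2+\xi^2)^s\,d\xi,
\end{align*}
with the integral in $\xi$ replaced by a sum over $2\pi\Z$ in the circle case, the $\xi=0$ term matching the low-frequency behaviour. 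Since $\kappa_0\geq 1$ and $s<0$, the weight obeys $\kappa_0^{2s}(1+\xi^2)^s\leq(\kappa_0^2+\xi^2)^s\leq(1+\xi^2)^s$. Integrating \eqref{Key Est} against $\kappa^{2s+2}\,d\kappa$ over $[\kappa_0,\infty)$ and inserting these two-sided bounds yields $\kappa_0^{2s}\|q(t)\|_{H^s}^2\lesssim\|q(0)\|_{H^s}^2$; recalling $\kappa_0\sim 1+\|q(0)\|_{H^{-1}}^2$ gives exactly the claimed inequality.

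The endpoint $s=-1$ must be handled separately, since the $\kappa$-integral above diverges there; but it follows immediately from \eqref{Key Est} at the single value $\kappa=\kappa_0$ together with \eqref{H-1normal}, which give $\|q(t)\|_{H^{-1}}^2\lesssim\kappa_0^2\|q(0)\|_{H^{-1}}^2$, i.e.\ the stated bound with exponent $|s|=1$. The main obstacle is the continuity argument of the second paragraph—specifically, justifying conservation of $\alpha$ on the bootstrap interval $I$ where only $f$ is under control; once that is in place, the $\kappa$-integration and the weight asymptotics are routine for each fixed $s\in(-1,0)$.
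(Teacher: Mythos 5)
Your proposal is correct and takes essentially the same route as the paper: the same bootstrap via conservation of $\alpha$ and \eqref{comp size} to get \eqref{Key Est}, the single-$\kappa$ evaluation with \eqref{H-1normal} at $s=-1$, and integration against $\kappa^{1+2s}\,d\kappa$ with \eqref{secIntegral} for $-1<s<0$; your careful justification that constancy of $\alpha$ on the bootstrap interval needs only convergence of the series (the threshold in Proposition~\ref{P:KdVcons} being merely sufficient) is a faithful, and welcome, expansion of what the paper compresses into ``a simple continuity argument.'' The only item you omit is the paper's one-line observation that the lower bound in Theorem~\ref{T:main} then follows by time-translation symmetry.
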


\begin{proof}
In view of \eqref{H-1normal}, our hypothesis on $\kappa$ guarantees that
$$
\bigl\| \sqrt{R_0}\, q(0)\, \sqrt{R_0}\, \bigr\|^2_{\mathfrak I_2} < \tfrac{1}{18} < \tfrac19
$$
and so Lemma~\ref{L:C1} applies.  Moreover, by conservation of $\alpha$ we then have
$$
\bigl\| \sqrt{R_0}\, q(t)\, \sqrt{R_0}\, \bigr\|^2_{\mathfrak I_2} \leq 3\alpha(\kappa; q(t)) \leq 2\bigl\| \sqrt{R_0}\, q(0)\, \sqrt{R_0} \,\bigr\|^2_{\mathfrak I_2} < \tfrac19
$$
in a neighborhood of $t=0$. A simple continuity argument then completes the proof of \eqref{Key Est}.

Choosing $\kappa=\kappa_0:=1 + 90 \| q(0) \|_{H^{-1}}^2$ in \eqref{Key Est} and invoking Corollary~\ref{C:I2}, it follows that
$$
\| q(t) \|_{H^{-1}}^2 \leq  40 \kappa_0^2 \| q(0) \|_{H^{-1}}^2.
$$
This yields the upper bound in Theorem~\ref{T:main} in the case $s=-1$.  The lower bound follows from this together with time translation symmetry.

Let us now address the case $-1<s<0$.  From \eqref{H-1kappa} and \eqref{Key Est}, we deduce
$$
\bigl\langle q(t), (-\partial_x^2+4\kappa^2)^{-1} q(t)\bigr\rangle \leq 10 \bigl\langle q(0), (-\partial_x^2+4\kappa^2)^{-1} q(0)\bigr\rangle \qquad\forall \kappa\geq \kappa_0.
$$
Integrating both sides against the measure $\kappa^{1+2s} \,d\kappa$ over the interval $[\kappa_0,\infty)$ and using the relation
\begin{align}\label{secIntegral}
(\xi^2+\kappa_0^2)^s \sim \int_{\kappa_0}^\infty  \frac{1}{\xi^2+4\kappa^2} \kappa^{1+2s} \,d\kappa
\end{align}
(which holds with absolute constants depending only on $s$), we deduce that
$$
\bigl\langle q(t), (-\partial_x^2+4\kappa_0^2)^{s} q(t)\bigr\rangle \lesssim \bigl\langle q(0), (-\partial_x^2+4\kappa_0^2)^{s} q(0)\bigr\rangle
$$
and hence that
$$
\| q(t) \|_{H^{s}}^2 \lesssim \kappa_0^{2|s|}  \| q(0) \|_{H^{s}}^2 \lesssim \bigl(1 + \| q(0) \|_{H^{-1}}^2 \bigr)^{2|s|} \| q(0) \|_{H^{s}}^2.
$$
The upper bound in Theorem~\ref{T:main} follows immediately from this; the lower bound can then deduced by invoking time translation symmetry.
\end{proof}

\section{Conservation of other norms}\label{S:3}

In this section we treat two main topics: Sobolev spaces at positive regularity and $L^2$-based Besov norms.

\begin{definition}\label{D:Besov}
Given $s\in\R$ and $1\leq r\leq \infty$, we define the \emph{Besov norm}
\begin{align*}
\| f \|_{B^{s,2}_r} = \biggl[ \| \hat f(\xi) \|^r_{L^2(|\xi|\leq 1)} + \sum_{N\in\NN}  N^{rs} \| \hat f(\xi) \|^r_{L^2(N< |\xi|\leq 2N)} \biggr]^{1/r},
\end{align*}
with the usual interpretation when $r=\infty$.  Here the sum in $N$ is taken over $\NN:=\{1,2,4,8,16,\ldots\}$.  In the line case, $L^2$ refers to integration against Lebesgue measure; in the circle case, we use counting measure on $2\pi\Z$.
\end{definition}

As we are working with $L^2$-based Besov norms, replacement of the sharp Fourier cutoffs used above by regular Littlewood--Paley projections yields an equivalent norm (cf. Lemmas~\ref{L:Besov1},~\ref{L:Besov2}, and~\ref{L:Besov3} below).  We have elected to use sharp cutoffs in this paper in order to keep the presentation more elementary.

We first obtain bounds on Besov norms in the range $-1<s<0$, which requires only very simple modifications to the ideas presented already.  The key is to connect such Besov norms to the quantities in \eqref{H-1kappa} and then invoke \eqref{Key Est}.  The first step is covered by the following lemma:

\begin{lemma}\label{L:Besov1}
Fix $-1<s<0$, $r\in[1,\infty]$, and $\kappa_0\geq 1$. For any Schwartz function $f$ on $\R$ or $\R/\Z$,
\begin{align}\label{E:Besov1}
\| f \|_{B^{s,2}_r} \sim \biggl[ \sum_{N\in\NN} N^{rs} \Bigl\langle f, \tfrac{\kappa_0^2 N^2}{-\partial_x^2 + 4\kappa_0^2 N^2} f\Bigr\rangle ^{r/2} \biggr]^{1/r}
\end{align}
with implicit constants depending only on $s$ and $\kappa_0$.
\end{lemma}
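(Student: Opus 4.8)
The plan is to reduce the claimed equivalence to two elementary ingredients: a pointwise description of the Fourier multiplier $\phi_N(\xi):=\frac{\kappa_0^2N^2}{\xi^2+4\kappa_0^2N^2}$, and a pair of weighted (Hardy-type) summation inequalities. Throughout I write $g(\xi)=|\hat f(\xi)|^2$ and decompose frequency space into the block $|\xi|\le 1$ together with the dyadic blocks $M<|\xi|\le 2M$, $M\in\NN$; set $c_0=\|\hat f\|_{L^2(|\xi|\le 1)}^2$ and $c_M=\|\hat f\|_{L^2(M<|\xi|\le 2M)}^2$, so that $\|f\|_{B^{s,2}_r}^r=c_0^{r/2}+\sum_M M^{rs}c_M^{r/2}$. (In the circle case all integrals become sums over $2\pi\Z$; the argument is insensitive to this.) It suffices to prove the equivalence of $r$-th powers.

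First I would record the pointwise bound. Since $\phi_N$ is positive and decreasing in $|\xi|$, a direct check gives $\phi_N(\xi)\sim 1$ for $|\xi|\le 2\kappa_0 N$ and $\phi_N(\xi)\sim \kappa_0^2N^2/\xi^2$ for $|\xi|>2\kappa_0 N$; moreover $\phi_N$ varies by at most a bounded factor across any one dyadic block. Hence, writing $a_N:=\bigl\langle f,\tfrac{\kappa_0^2N^2}{-\partial_x^2+4\kappa_0^2N^2}f\bigr\rangle=\int\phi_N\,g$, summing the block contributions yields
\[
a_N\sim L_N+T_N,\qquad L_N:=c_0+\!\!\sum_{M\le\kappa_0 N}\!\!c_M,\quad T_N:=\kappa_0^2N^2\!\!\sum_{M>\kappa_0 N}\!\!\frac{c_M}{M^2},
\]
with constants depending only on $\kappa_0$. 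The lower bound in \eqref{E:Besov1} is then immediate: for each dyadic $M$ choose $N\sim M/\kappa_0$, so that $a_N\gtrsim c_M$ (and $a_1\gtrsim c_0$ since $\kappa_0\ge1$), whence $N^{rs}a_N^{r/2}\gtrsim M^{rs}c_M^{r/2}$; as the correspondence $M\mapsto N$ has bounded multiplicity, summing recovers $\|f\|_{B^{s,2}_r}^r$.

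The substance is the upper bound, for which — using $a_N^{r/2}\lesssim L_N^{r/2}+T_N^{r/2}$ — it suffices to prove the two summation inequalities $\sum_N N^{rs}L_N^{r/2}\lesssim\|f\|_{B^{s,2}_r}^r$ and $\sum_N N^{rs}T_N^{r/2}\lesssim\|f\|_{B^{s,2}_r}^r$. Passing to logarithmic variables $N=2^n$, $M=2^m$, setting $k_0=\log_2\kappa_0\ge0$ and $\beta_m=2^{ms}c_{2^m}^{1/2}$ (with $\beta_0=c_0^{1/2}$), both reduce to applying a discrete one-sided operator to $(\beta_m^2)$. For the low-frequency sum the kernel is $2^{2s(n-m)}\mathbf 1[m\le n+k_0]$, summable in $\ell^1$ because $s<0$; for the tail it is $2^{-2(1+s)(m-n)}\mathbf 1[m>n+k_0]$, summable because $1+s>0$. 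I would then split on $r$: for $1\le r\le2$ the elementary inequality $\bigl(\sum a_i\bigr)^{r/2}\le\sum a_i^{r/2}$ reduces each estimate to interchanging the order of summation and summing a geometric series, while for $2\le r\le\infty$ the same kernels, lying in $\ell^1$, act boundedly on $\ell^{r/2}$ by Young's convolution inequality. Adding the $c_0$ contribution, which costs only the convergent factor $\sum_N N^{rs}$, completes the upper bound.

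The step I expect to be the main obstacle is the tail inequality for $T_N$: unlike a sharp low-pass cutoff, the resolvent multiplier has a genuine $|\xi|^{-2}$ tail, so high frequencies leak into every $a_N$, and one must verify that the weight $N^{rs}$ (with $s<0$) together with the exponent $r/2$ still sums these contributions to something controlled by the Besov norm. This is precisely where the hypothesis $s>-1$ enters, through summability of the tail kernel; the low-frequency inequality, by contrast, uses only $s<0$.
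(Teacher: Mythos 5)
Your proof is correct and takes essentially the same route as the paper's: both directions reduce, after Plancherel and decomposition into the same dyadic blocks, to the boundedness on $\ell^r(\NN)$ of the same one-sided discrete kernel, with $s<0$ controlling the low-frequency contribution and $1+s>0$ controlling the resolvent tail $\kappa_0^2N^2/\xi^2$ --- exactly the obstacle you flag. The only difference is presentational: the paper linearizes via the subadditivity of the square root and runs a single Schur test (row/column sum bounds) covering all $1\le r\le\infty$ at once, whereas you keep the quadratic form and split into $r\le 2$ (subadditivity of $t\mapsto t^{r/2}$ plus Fubini) and $r\ge 2$ (Young's inequality), which is equivalent in content and yields the same $\kappa_0^{|s|}$-type dependence of the constants.
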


\begin{proof}
By Plancherel,
$$
\| \hat f(\xi) \|^2_{L^2(|\xi|\leq N)} \leq 5 \kappa_0^2 N^2 \bigl\langle f, (-\partial_x^2 + 4\kappa_0^2 N^2)^{-1} f\bigr\rangle
$$
and consequently,
\begin{align}
\text{LHS\eqref{E:Besov1}} \leq \sqrt{5} \cdot \text{RHS\eqref{E:Besov1}}.
\label{E:Besov1a}
\end{align}

Towards the other direction, we note that for $N\in\NN$,
\begin{align*}
\kappa_0^2 N^2 \bigl\langle f,\, & (-\partial_x^2 + 4\kappa_0^2 N^2)^{-1} f\bigr\rangle \\
&\leq \| \hat f(\xi) \|^2_{L^2(|\xi|\leq 1)} + \sum_{M\in\NN}  \tfrac{\kappa_0^2N^2}{M^2 + 4\kappa_0^2 N^2} \| \hat f(\xi) \|^2_{L^2(M< |\xi|\leq 2M)} \\
&\leq \biggl[ \| \hat f(\xi) \|_{L^2(|\xi|\leq 1)} + \sum_{M\in\NN}  \tfrac{\kappa_0N}{\sqrt{M^2 + 4\kappa_0^2 N^2}} \| \hat f(\xi) \|_{L^2(M< |\xi|\leq 2M)} \biggr]^2,
\end{align*}
from which it follows that
\begin{align*}
\text{RHS\eqref{E:Besov1}} \leq \biggl\| N^s  \| \hat f(\xi) \|_{L^2(|\xi|\leq 1)}
	+ \! \sum_{M\in\NN} \! \tfrac{\kappa_0 N^{1+s}}{\sqrt{M^2 + 4\kappa_0^2 N^2}} \| \hat f(\xi) \|_{L^2(M<|\xi|\leq 2M)} \biggr\|_{\ell^r(\NN)} .
\end{align*}
In this way, the proof of the remaining inequality is reduced to showing that a certain matrix defines a bounded operator on $\ell^r$.  To verify this, we use Schur's test: The row sums are bounded by
$$
N^s + \sum_{M\in\NN}  \tfrac{\kappa_0N^{1+s} M^{-s}}{\sqrt{M^2 + 4\kappa_0^2 N^2}} \lesssim_s 1 + \kappa_0^{-s} \lesssim_s \kappa_0^{-s}
$$
uniformly in $N$, while we bound the column sums by
$$
\sum_{N\in\NN} N^s\lesssim_s 1 \qtq{and} \sum_{N\in\NN} \tfrac{\kappa_0N^{1+s} M^{-s}}{\sqrt{M^2 + 4\kappa_0^2 N^2}} \lesssim_s \kappa_0^{-s}
$$
uniformly in $M$.  Correspondingly,
\begin{align}
\text{RHS\eqref{E:Besov1}} \lesssim_s \kappa_0^{|s|} \cdot \text{LHS\eqref{E:Besov1}},
\label{E:Besov1b}
\end{align}
which completes the proof of \eqref{E:Besov1}.
\end{proof}

\begin{corollary}\label{C:neg}
Fix $-1<s<0$ and $r\in[1,\infty]$.  For any Schwartz solution $q(t)$ to KdV on $\R$ or $\R/\Z$, we have
$$
\| q(t) \|_{B^{s,2}_r} \lesssim \| q(0) \|_{B^{s,2}_r} \bigl( 1 + \| q(0) \|_{H^{-1}}^2\bigr)^{|s|} \lesssim \| q(0) \|_{B^{s,2}_r} \bigl( 1 + \| q(0) \|_{B^{s,2}_r}^2\bigr)^{|s|}.
$$
\end{corollary}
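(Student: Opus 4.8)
The plan is to run the $-1<s<0$ argument of Theorem~\ref{s<0} frequency-shell by frequency-shell, feeding the family of quadratic forms controlled by \eqref{Key Est} into the characterization of the Besov norm supplied by Lemma~\ref{L:Besov1}. Throughout I set $\kappa_0 := 1 + 90\| q(0)\|_{H^{-1}}^2\geq 1$, the threshold appearing in Theorem~\ref{s<0}, and I abbreviate
\[
R(f) := \biggl[ \sum_{N\in\NN} N^{rs} \Bigl\langle f, \tfrac{\kappa_0^2 N^2}{-\partial_x^2 + 4\kappa_0^2 N^2} f\Bigr\rangle^{r/2} \biggr]^{1/r},
\]
so that Lemma~\ref{L:Besov1} reads $\|f\|_{B^{s,2}_r}\sim R(f)$.

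The first step is to establish the shell-by-shell monotonicity
\[
\Bigl\langle q(t), \tfrac{\kappa_0^2 N^2}{-\partial_x^2 + 4\kappa_0^2 N^2} q(t)\Bigr\rangle \leq 10\,\Bigl\langle q(0), \tfrac{\kappa_0^2 N^2}{-\partial_x^2 + 4\kappa_0^2 N^2} q(0)\Bigr\rangle \qquad\text{for every } N\in\NN.
\]
The key observation is that for each dyadic $N\geq 1$ the value $\kappa = \kappa_0 N$ satisfies $\kappa\geq\kappa_0$, so \eqref{Key Est} is available at this $\kappa$. Chaining the lower bound of \eqref{H-1kappa} at time $t$, then \eqref{Key Est}, then the upper bound of \eqref{H-1kappa} at time $0$ (all at $\kappa=\kappa_0 N$) leaves a clean constant $10$, the two reciprocal weights $\tfrac1{\kappa_0 N}$ from \eqref{H-1kappa} cancelling; multiplying through by $\kappa_0^2 N^2$ gives the displayed inequality. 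Raising to the power $r/2$, weighting by $N^{rs}$, summing over $N$, and taking $r$-th roots then yields $R(q(t))\leq \sqrt{10}\,R(q(0))$.

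The final assembly invokes the two directions of Lemma~\ref{L:Besov1} \emph{asymmetrically}, and this is the one point requiring real care. At time $t$ I would use the cheap inequality \eqref{E:Besov1a}, namely $\|q(t)\|_{B^{s,2}_r}\leq \sqrt5\,R(q(t))$, which carries no power of $\kappa_0$; at time $0$ I would use \eqref{E:Besov1b}, namely $R(q(0))\lesssim_s \kappa_0^{|s|}\|q(0)\|_{B^{s,2}_r}$. Combining the three bounds gives $\|q(t)\|_{B^{s,2}_r}\lesssim_s \kappa_0^{|s|}\|q(0)\|_{B^{s,2}_r}$, and since $\kappa_0\lesssim 1+\|q(0)\|_{H^{-1}}^2$ this is exactly the first claimed inequality. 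Had I instead spent a factor $\kappa_0^{|s|}$ at \emph{both} times I would have landed on the exponent $2|s|$, so it is essential to invoke the $\kappa_0$-free direction for $q(t)$; this is the main (and essentially the only) obstacle.

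It remains to pass from $H^{-1}$ to the Besov norm in the middle term, i.e.\ to prove the second inequality. This is the standard embedding $B^{s,2}_r\hookrightarrow H^{-1}$, valid for all $r\in[1,\infty]$ precisely because $s>-1$: writing $a_N = \|\hat q(0)\|_{L^2(N<|\xi|\leq 2N)}$ and $a_0=\|\hat q(0)\|_{L^2(|\xi|\leq 1)}$, one has $\|q(0)\|_{H^{-1}}^2\sim a_0^2 + \sum_N N^{-2}a_N^2 = a_0^2 + \sum_N N^{-2(1+s)}(N^s a_N)^2$, and since $1+s>0$ the weights $N^{-2(1+s)}$ are summable over $\NN$, whence $\sum_N N^{-2(1+s)}(N^s a_N)^2\lesssim_s \sup_N (N^s a_N)^2\leq \|q(0)\|_{B^{s,2}_r}^2$. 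Thus $\|q(0)\|_{H^{-1}}\lesssim_s \|q(0)\|_{B^{s,2}_r}$, and because $|s|<1$ this upgrades $(1+\|q(0)\|_{H^{-1}}^2)^{|s|}$ to $(1+\|q(0)\|_{B^{s,2}_r}^2)^{|s|}$ up to an $s$-dependent constant, completing the proof.
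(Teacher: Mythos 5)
Your proof is correct and follows essentially the same route as the paper's: the shell-by-shell bound at $\kappa=\kappa_0 N$ obtained by chaining \eqref{H-1kappa} and \eqref{Key Est} (with the same constant $10$), the asymmetric use of \eqref{E:Besov1a} at time $t$ and \eqref{E:Besov1b} at time $0$ to spend the factor $\kappa_0^{|s|}$ only once, and the embedding $B^{s,2}_r\hookrightarrow H^{-1}$ for the final inequality. The only difference is that you spell out the embedding and the bookkeeping of constants, which the paper dismisses as following readily from the definitions.
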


\begin{proof}
Choosing $\kappa_0=1+90\|q(0)\|_{H^{-1}}^2$, Theorem~\ref{s<0} and \eqref{H-1kappa} together imply
$$
\bigl\langle q(t), (-\partial_x^2 + 4\kappa_0^2 N^2)^{-1} q(t)\bigr\rangle \leq 10 \bigl\langle q(0), (-\partial_x^2 + 4\kappa_0^2 N^2)^{-1} q(0)\bigr\rangle
$$
for all $N\geq 1$. Summing in $N\in\NN$ as in \eqref{E:Besov1} and applying \eqref{E:Besov1a} and \eqref{E:Besov1b} to the left and right sides, respectively, then yields
$$
\| q(t) \|_{B^{s,2}_r} \lesssim \| q(0) \|_{B^{s,2}_r} \bigl( 1 +  90 \| q(0) \|_{H^{-1}}^2\bigr)^{|s|}.
$$
The result now follows since $B^{s,2}_{r\vphantom{2}} \hookrightarrow B^{-1,2}_2 \cong H^{-1}$ as follows readily from the definition.
\end{proof}

Lemma~\ref{L:Besov1} does not extend to Besov norms with $s=-1$ because the function
\begin{align*}
\xi \mapsto \frac{\kappa^2}{\xi^2 + 4\kappa^2}
\end{align*}
decays too slowly as $\xi\to\infty$.  On the other hand, to extend the argument into the region $s\geq 0$, we would need a function decaying suitably as $\xi\to 0$.

Our chosen remedy for these problems is to take linear combinations of these function at different values of $\kappa$ to cancel any undesirable behavior.  This approach generates one new problem, namely, that \eqref{Key Est} does not extend to such linear combinations (which necessarily have both positive and negative coefficients), because \eqref{comp size} does not extend to such a setting.  As we will see in due course, the cure for this second new problem is to bound the difference between $\alpha(t)$ and the first term in the series \eqref{Gen alpha} in a different way --- one which exploits Theorem~\ref{s<0}.  Let us begin with the analogue of Lemma~\ref{L:Besov1} for the case $s=-1$.

\begin{lemma}\label{L:Besov2}
Fix $1\leq r\leq \infty$, $\kappa_0\geq 1$, and
\begin{align}\label{w-1}
w(\xi;\kappa) = 4 \frac{(\kappa/2)^2}{\xi^2 + \kappa^2} - \frac{\kappa^2}{\xi^2 + 4\kappa^2} = \frac{3\kappa^4}{4(\xi^2 + \kappa^2)(\xi^2 + 4\kappa^2)}.
\end{align}
Then for any Schwartz function $f$ on $\R$ or $\R/\Z$,
\begin{align}\label{E:Besov2}
\| f \|_{B^{-1,2}_r}  \sim \biggl[ \sum_{N\in\NN} N^{-r} \bigl\langle f, \,w(-i\partial_x,\kappa_0 N) f\bigr\rangle^{r/2} \biggr]^{1/r}.
\end{align}
\end{lemma}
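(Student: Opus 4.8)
The plan is to establish \eqref{E:Besov2} as a two-sided estimate, following the template set by Lemma~\ref{L:Besov1}: one inequality is a cheap pointwise comparison on each frequency band, while the reverse is a Schur-test argument. The geometry of the problem is governed by the elementary bounds
\begin{align*}
w(\xi;\kappa)\gtrsim 1 \quad\text{for } |\xi|\leq\kappa, \qquad\text{and}\qquad w(\xi;\kappa)\leq \tfrac{3\kappa^4}{4(\xi^2+\kappa^2)^2}\quad\text{for all }\xi,
\end{align*}
both immediate from the explicit formula in \eqref{w-1}. The key difference from Lemma~\ref{L:Besov1} is that $w$ decays like $|\xi|^{-4}$ rather than $|\xi|^{-2}$; this extra decay is precisely what repairs the divergence flagged in the remark following Lemma~\ref{L:Besov1} and allows us to reach $s=-1$. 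As in Proposition~\ref{P:free}, $\langle f, w(-i\partial_x,\kappa)f\rangle$ is $\int w(\xi;\kappa)|\hat f(\xi)|^2\,d\xi$ on the line and the corresponding sum over $\xi\in 2\pi\Z$ on the circle; the two settings are treated by identical estimates, so I suppress the distinction below.

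For the bound of the left side of \eqref{E:Besov2} by the right, I would use only the lower bound on $w$. Since $\kappa_0\geq 1$ gives $N\leq\kappa_0 N$, we have $w(\xi;\kappa_0N)\gtrsim 1$ on $\{|\xi|\leq N\}$, whence $\|\hat f\|_{L^2(|\xi|\leq N)}\lesssim\langle f, w(-i\partial_x,\kappa_0N)f\rangle^{1/2}$. Multiplying by $N^{-1}$ and taking the $\ell^r(\NN)$ norm bounds $\bigl[\sum_N N^{-r}\|\hat f\|^r_{L^2(|\xi|\leq N)}\bigr]^{1/r}$ by the right side of \eqref{E:Besov2}. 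Finally, each Besov building block is recovered from a ball: $\|\hat f\|_{L^2(N<|\xi|\leq 2N)}\leq\|\hat f\|_{L^2(|\xi|\leq 2N)}$, and reindexing $N\mapsto 2N$ (permissible since $(2N)^{-r}\sim N^{-r}$), together with the $N=1$ term handling $\{|\xi|\leq 1\}$, shows that $\|f\|_{B^{-1,2}_r}$ is dominated by the right side of \eqref{E:Besov2}.

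For the reverse inequality I would decompose the frequency integral into the bands $\{|\xi|\leq 1\}$ and $\{M<|\xi|\leq 2M\}$, $M\in\NN$, and on each band replace $w$ by its supremum. Using $w(\xi;\kappa_0N)\lesssim\frac{(\kappa_0N)^4}{(M^2+(\kappa_0N)^2)^2}$ on the $M$-th band (and $\lesssim 1$ on the lowest band) and then extracting a square root via subadditivity, one arrives at
\begin{align*}
\langle f, w(-i\partial_x,\kappa_0N)f\rangle^{1/2}\lesssim \|\hat f\|_{L^2(|\xi|\leq 1)} + \sum_{M\in\NN}\tfrac{(\kappa_0N)^2}{M^2+(\kappa_0N)^2}\|\hat f\|_{L^2(M<|\xi|\leq 2M)}.
\end{align*}
After multiplying by $N^{-1}$ and writing $\|\hat f\|_{L^2(M<|\xi|\leq 2M)} = M\cdot\bigl(M^{-1}\|\hat f\|_{L^2(M<|\xi|\leq 2M)}\bigr)$ to match the Besov weights, the assertion that the right side of \eqref{E:Besov2} is $\lesssim\|f\|_{B^{-1,2}_r}$ reduces to the boundedness on $\ell^r(\NN)$ of the matrix with entries $N^{-1}$ (acting on the $\{|\xi|\leq 1\}$ component) and $\frac{\kappa_0^2 NM}{M^2+\kappa_0^2N^2}$.

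The crux --- and the step I expect to demand the most care --- is verifying this matrix bound by Schur's test, exactly in the spirit of \eqref{E:Besov1b}. Splitting each dyadic sum at the peak $M\sim\kappa_0N$ and summing the two resulting geometric series shows that both the row sums, $N^{-1}+\sum_M\frac{\kappa_0^2NM}{M^2+\kappa_0^2N^2}\lesssim\kappa_0$ uniformly in $N$, and the column sums, $\sum_N N^{-1}=2$ and $\sum_N\frac{\kappa_0^2NM}{M^2+\kappa_0^2N^2}\lesssim\kappa_0$ uniformly in $M$, are finite. It is here that the $|\xi|^{-4}$ decay of $w$ is indispensable: it renders the tail $M\gg\kappa_0N$ of the row sum (equivalently, the entries $\sim\kappa_0^2N/M$) summable, whereas the slower decay underlying Lemma~\ref{L:Besov1} produces an $O(1)$ summand and hence a divergent row sum at $s=-1$. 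Schur's test then yields an operator norm $\lesssim\kappa_0$ on every $\ell^r$, $1\leq r\leq\infty$, completing the proof.
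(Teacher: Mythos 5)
Your proposal is correct and takes essentially the same approach as the paper, whose proof is just a compressed ``as in Lemma~\ref{L:Besov1}'': the easy direction via the pointwise lower bound $w(\xi;\kappa_0N)\geq\tfrac{3}{40}$ on $\{|\xi|\leq N\}$, and the reverse via band-by-band suprema and square-root subadditivity, reducing to Schur's test for the matrix with entries $\tfrac{\kappa_0^2NM}{M^2+4\kappa_0^2N^2}$, with row and column sums $\lesssim\kappa_0$ exactly as you compute. Your diagnosis that the quartic decay of $w$ is what repairs the divergence at $s=-1$ also matches the paper's remark preceding the lemma.
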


\begin{proof}
As in the proof of Lemma~\ref{L:Besov1},
$$
\| \hat f(\xi) \|^2_{L^2(|\xi|\leq N)} \leq \tfrac{40}3 \langle f, \,w(-i\partial_x,\kappa_0 N) f\bigr\rangle
$$
and consequently,
\begin{align}
\text{LHS\eqref{E:Besov2}} \leq \bigl(\tfrac{40}3 \bigr)^{1/2} \cdot \text{RHS\eqref{E:Besov2}}.
\label{E:Besov2a}
\end{align}
Continuing to argue as in that proof shows
\begin{align*}
\text{RHS\eqref{E:Besov2}}  \lesssim_r \biggl\|\tfrac1N \| \hat f(\xi) \|_{L^2(|\xi|\leq 1)}
	+ \! \sum_{M\in\NN} \! \tfrac{\kappa_0^2 N}{M^2 + 4\kappa_0^2 N^2} \| \hat f(\xi) \|_{L^2(M<|\xi|\leq 2M)} \biggr\|_{\ell^r(\NN)}
\end{align*}
and then that
\begin{align}
\text{RHS\eqref{E:Besov2}} \lesssim_r \kappa_0 \cdot \text{LHS\eqref{E:Besov2}}.
\label{E:Besov2b}
\end{align}
This completes the proof of the lemma.
\end{proof}

Repeating the same arguments one more time reveals the following:

\begin{lemma}\label{L:Besov3}
Fix $1\leq r\leq \infty$ and $-1<s<1$ and define
\begin{align}\label{w>0}
w(\xi;\kappa) = \frac{\kappa^2}{\xi^2 + 4\kappa^2} - \frac{(\kappa/2)^2}{\xi^2 + \kappa^2} = \frac{3\kappa^2\xi^2}{(\xi^2 + \kappa^2)(\xi^2 + 4\kappa^2)}.
\end{align}
Then
\begin{align}
\| f \|_{B^{s,2}_r} \lesssim_s \|f\|_{H^{-1}} +  \kappa_0 \biggl( \sum_{N\in\NN} N^{rs} \langle f, \,w(-i\partial_x,\kappa_0 N) f\bigr\rangle^{r/2} \biggr)^{1/r}
\label{E:Besov3a}
\end{align}
and
\begin{align}
\biggl( \sum_{N\in\NN} N^{rs} \langle f, \,w(-i\partial_x,\kappa_0 N) f\bigr\rangle^{r/2} \biggr)^{1/r} \lesssim \kappa_0^{-s} \| f \|_{B^{s,2}_r}
\label{E:Besov3b}
\end{align}
uniformly for $\kappa_0\geq 1$.
\end{lemma}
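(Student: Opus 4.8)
The plan is to mimic the proofs of Lemmas~\ref{L:Besov1} and~\ref{L:Besov2}, the only genuinely new feature being that the weight \eqref{w>0} now decays at \emph{both} ends of the frequency axis; this is what forces the restriction $-1<s<1$. The essential input is a pair of pointwise bounds on $w(\xi;\kappa_0 N)$. Writing $\kappa=\kappa_0 N\geq N$, an elementary estimate gives the lower bound $w(\xi;\kappa_0 N)\gtrsim \kappa_0^{-2}$ throughout the block $N<|\xi|\leq 2N$. In the opposite direction, on a block $M<|\xi|\leq 2M$ one has $w(\xi;\kappa_0 N)\lesssim \kappa_0^2 N^2 M^2 (M^2+\kappa_0^2 N^2)^{-2}$, while on $|\xi|\leq 1$ one has $w(\xi;\kappa_0 N)\lesssim \kappa_0^{-2}N^{-2}$.

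To prove \eqref{E:Besov3a} I would use the lower bound: on the block $N<|\xi|\leq 2N$ it yields $\|\hat f\|_{L^2(N<|\xi|\leq 2N)}^2\lesssim \kappa_0^2\langle f,w(-i\partial_x,\kappa_0 N)f\rangle$. Since $w$ is too small near $\xi=0$ to see the block $|\xi|\leq 1$, that block must be accounted for separately; but trivially $\|\hat f\|_{L^2(|\xi|\leq 1)}\lesssim \|f\|_{H^{-1}}$. Summing the dyadic blocks in $\ell^r$ and adjoining the low-frequency term via $(A+B)^{1/r}\lesssim A^{1/r}+B^{1/r}$ produces exactly \eqref{E:Besov3a}.

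The substantive inequality is \eqref{E:Besov3b}. Here I would expand $\langle f,w(-i\partial_x,\kappa_0 N)f\rangle$ as the integral of $w(\xi;\kappa_0 N)|\hat f(\xi)|^2$, split it into the Littlewood--Paley blocks of Definition~\ref{D:Besov}, and apply the upper bounds above in each. Taking square roots (using subadditivity of $t\mapsto\sqrt t$) and multiplying by $N^s$ bounds $N^s\langle f,w(-i\partial_x,\kappa_0 N)f\rangle^{1/2}$ by $\kappa_0^{-1}N^{s-1}\|\hat f\|_{L^2(|\xi|\leq 1)}$ plus $\sum_{M}\tfrac{\kappa_0 N^{1+s}M}{M^2+\kappa_0^2 N^2}\|\hat f\|_{L^2(M<|\xi|\leq 2M)}$. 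Writing $b_M=M^s\|\hat f\|_{L^2(M<|\xi|\leq 2M)}$, so that $\|f\|_{B^{s,2}_r}$ controls both $\|(b_M)\|_{\ell^r}$ and the $|\xi|\leq 1$ block, the claim reduces to showing that the matrix
$$
S_{N,M}=\frac{\kappa_0 N^{1+s}M^{1-s}}{M^2+\kappa_0^2 N^2}, \qquad N,M\in\NN,
$$
is bounded on $\ell^r(\NN)$ with norm $\lesssim \kappa_0^{-s}$, uniformly in $\kappa_0\geq 1$; the separate $|\xi|\leq 1$ contribution is handled by the convergence of $\sum_N N^{(s-1)r}$ (valid since $s<1$) together with $\kappa_0^{-1}\leq \kappa_0^{-s}$.

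The heart of the matter, and the step I expect to be most delicate, is the Schur test for $S$. Setting $\kappa=\kappa_0 N$, the row sum is $\kappa N^s\sum_M M^{1-s}(M^2+\kappa^2)^{-1}\sim \kappa N^s\cdot\kappa^{-1-s}=\kappa_0^{-s}$, where the dyadic sum is comparable to $\kappa^{-1-s}$ precisely because $1-s>0$ controls the terms $M\lesssim\kappa$ and $-1-s<0$ controls the terms $M\gtrsim\kappa$. The column sum is $\kappa_0 M^{1-s}\sum_N N^{1+s}(M^2+\kappa_0^2N^2)^{-1}\sim \kappa_0 M^{1-s}\cdot\kappa_0^{-1-s}M^{s-1}=\kappa_0^{-s}$, the dyadic sum being comparable to $\kappa_0^{-1-s}M^{s-1}$ because $1+s>0$ and $s-1<0$. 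Thus both row and column sums are $\lesssim\kappa_0^{-s}$ uniformly, and the restriction $-1<s<1$ is exactly what makes both pairs of geometric sums converge with these exponents. Since $S$ is then bounded on $\ell^1$ and on $\ell^\infty$ with norm $\lesssim\kappa_0^{-s}$, interpolation yields the same bound on every $\ell^r$, which completes \eqref{E:Besov3b}.
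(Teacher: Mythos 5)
Your proposal is correct and is precisely the argument the paper intends: its ``proof'' of Lemma~\ref{L:Besov3} is the one-line remark that one repeats the arguments of Lemmas~\ref{L:Besov1} and~\ref{L:Besov2}, and you have carried out exactly that repetition --- per-block pointwise bounds on $w(\xi;\kappa_0 N)$, separate treatment of the $|\xi|\leq 1$ block via $\|f\|_{H^{-1}}$, reduction to the matrix $S_{N,M}$ on $\ell^r(\NN)$, and a Schur test whose row and column sums are $\lesssim_s \kappa_0^{-s}$ exactly when $-1<s<1$. All the details check out, including the lower bound $w\gtrsim\kappa_0^{-2}$ on the diagonal block and the convergence of $\sum_N N^{(s-1)r}$ with $\kappa_0^{-1}\leq\kappa_0^{-s}$ for the low-frequency contribution.
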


As mentioned earlier, the second ingredient in our argument is to use Theorem~\ref{s<0} to upgrade \eqref{Key Est} to a two-sided estimate.  This is encapsulated in the following proposition:

\begin{prop}\label{P:D}
Let $q$ be a Schwartz solution of KdV and let
\begin{align*}
D(t;\kappa):= \kappa^3 \biggl| \tr\Bigl\{ \bigl( \sqrt{R_0}\, q(t)\, \sqrt{R_0}\;\!\bigr)^2 \Bigr\} - \tr\Bigl\{ \bigl( \sqrt{R_0}\, q(0)\, \sqrt{R_0}\;\!\bigr)^2 \Bigr\} \biggr|.
\end{align*}
Then for any $\kappa\geq 1 + 90 \| q(0) \|_{H^{-1}}^2$,
\begin{align}\label{E:Dest1}
D(t;\kappa) \lesssim  \kappa^{-\frac32 -3\sigma}  \|q(0)\|_{H^{\sigma}}^3 \qtq{for all}  -1 \leq \sigma \leq 0.
\end{align}
Moreover, 
\begin{align}\label{E:Dest2}
D(t;\kappa) \lesssim \kappa^{-2} \Bigl[ \|q(t)\|_{L^\infty} + \|q(0)\|_{L^\infty} \Bigr] \|q(0)\|_{L^2}^2 .
\end{align}
\end{prop}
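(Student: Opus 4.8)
The plan is to exploit the conservation of $\alpha(\kappa;q(t))$ in order to trade the difference of the \emph{quadratic} leading terms for a difference of the higher-order tails of the series \eqref{Gen alpha}, which are then straightforward to estimate. Writing $A(t):=\sqrt{R_0}\,q(t)\,\sqrt{R_0}$ and separating the $\ell=2$ term, Lemma~\ref{L:C1} expresses
$$
\alpha(\kappa;q(t)) = \tfrac12\tr\bigl\{A(t)^2\bigr\} + \tau(t), \qquad \tau(t) := \sum_{\ell=3}^\infty \frac{(-1)^\ell}{\ell}\tr\bigl\{A(t)^\ell\bigr\}.
$$
Since $\kappa\geq 1+90\|q(0)\|_{H^{-1}}^2$, the bound \eqref{Key Est} guarantees $\|A(t)\|_{\I_2}<\tfrac13$ for all $t$, so Proposition~\ref{P:KdVcons} applies and $\alpha(\kappa;q(t))$ is constant in time. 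The two quadratic terms therefore differ only through the tails, whence
$$
D(t;\kappa) = \kappa^3\bigl|\tr\{A(t)^2\} - \tr\{A(0)^2\}\bigr| = 2\kappa^3\,|\tau(0)-\tau(t)| \leq 2\kappa^3\bigl(|\tau(t)| + |\tau(0)|\bigr).
$$
Everything now reduces to estimating a single tail $|\tau(s)|$, and the two claimed bounds correspond to two ways of distributing norms across the factors of $A$ inside $\tr\{A^\ell\}$.

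For \eqref{E:Dest1} I would measure the tail purely in Hilbert--Schmidt norm. Using \eqref{dumb holder} together with $\|A(s)\|_{\I_2}<\tfrac13$, the geometric tail is dominated by its leading cubic term, giving $|\tau(s)|\leq \tfrac12\|A(s)\|_{\I_2}^3$. It remains to convert this into an $H^\sigma$ bound. From \eqref{H-1kappa} we have $\|A(s)\|_{\I_2}^2 \lesssim \tfrac1\kappa\langle q(s),(-\partial_x^2+4\kappa^2)^{-1}q(s)\rangle$, and the elementary pointwise inequality
$$
\frac{(1+\xi^2)^{|\sigma|}}{\xi^2+4\kappa^2} \lesssim_\sigma \kappa^{2|\sigma|-2} \qquad(|\sigma|\leq 1,\ \kappa\geq 1),
$$
verified by splitting at $|\xi|\sim\kappa$, yields $\|A(s)\|_{\I_2}^2 \lesssim \kappa^{-3-2\sigma}\|q(s)\|_{H^\sigma}^2$. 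Feeding $s=0,t$ into the displayed bound for $D$ and invoking \eqref{Key Est} once more to replace $\|A(t)\|_{\I_2}$ by $\|A(0)\|_{\I_2}$ produces $D(t;\kappa)\lesssim \kappa^3\|A(0)\|_{\I_2}^3 \lesssim \kappa^{-3/2-3\sigma}\|q(0)\|_{H^\sigma}^3$, as desired.

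For \eqref{E:Dest2} I would instead peel off the $L^\infty$ norm via the operator norm. The trace-H\"older estimate \eqref{not super dumb} gives $|\tr\{A^\ell\}|\leq \|A\|_{\op}^{\ell-2}\|A\|_{\I_2}^2$, and summing the resulting geometric series (again using $\|A\|_{\op}\leq\|A\|_{\I_2}<\tfrac13$) yields $|\tau(s)|\lesssim \|A(s)\|_{\op}\,\|A(s)\|_{\I_2}^2$. Now $\|A(s)\|_{\op}\leq \|\sqrt{R_0}\|_{\op}^2\,\|q(s)\|_{L^\infty} = \kappa^{-2}\|q(s)\|_{L^\infty}$, while \eqref{H-1kappa} combined with the crude bound $(\xi^2+4\kappa^2)^{-1}\leq (4\kappa^2)^{-1}$ gives $\|A(s)\|_{\I_2}^2\lesssim \kappa^{-3}\|q(s)\|_{L^2}^2$. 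Hence $|\tau(s)|\lesssim \kappa^{-5}\|q(s)\|_{L^\infty}\|q(s)\|_{L^2}^2$; substituting into the bound for $D$ and using conservation of the $L^2$ norm under KdV to replace $\|q(t)\|_{L^2}$ by $\|q(0)\|_{L^2}$ delivers precisely \eqref{E:Dest2}.

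The conceptual crux --- and the only step that is not a routine estimate --- is the first one: recognizing that conservation of the renormalized determinant lets one replace the hard-to-control difference of quadratic terms by the small, higher-order tail $\tau$. Once this identity is in hand, both bounds follow by simply choosing whether to measure the surplus factors of $A$ in $\I_2$ (which optimizes the power of $\kappa$ against low regularity) or in operator norm (which extracts an $L^\infty$ factor). The one point requiring care is confirming that the stated range $\kappa\geq 1+90\|q(0)\|_{H^{-1}}^2$ keeps $\|A(t)\|_{\I_2}<\tfrac13$ uniformly in $t$, so that both the conservation of $\alpha$ and the geometric summation of the tails are justified for all time; this is exactly the content of \eqref{Key Est}.
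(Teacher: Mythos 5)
Your proposal is correct and follows essentially the same route as the paper: conservation of $\alpha(\kappa;q(t))$ (justified at these $\kappa$ by \eqref{Key Est}) reduces the difference of quadratic terms to the cubic-and-higher tails, which are then summed geometrically via \eqref{dumb holder}/\eqref{not super dumb}, with the surplus factor measured in $\mathfrak{I}_2$ for \eqref{E:Dest1} and in operator norm via $\bigl\|\sqrt{R_0}\,q\,\sqrt{R_0}\bigr\|_{\op}\leq\kappa^{-2}\|q\|_{L^\infty}$ for \eqref{E:Dest2}. The only (harmless) deviation is that at the last step of \eqref{E:Dest2} you invoke $L^2$-conservation under KdV to replace $\|q(t)\|_{L^2}$ by $\|q(0)\|_{L^2}$, whereas the paper instead uses \eqref{Key Est} to put all Hilbert--Schmidt factors at time zero from the outset.
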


\begin{proof}
From \eqref{KdValpha}, we obtain
\begin{align*}
D(t;\kappa) &\leq 2\kappa^3|\alpha(\kappa;q(t))- \alpha(\kappa;q(0))| \\
&\qquad + \sum_{\ell=3}^\infty \tfrac{2\kappa^3}{\ell} \biggl| \tr\Bigl\{ \bigl( \sqrt{R_0}\, q(t)\, \sqrt{R_0}\;\!\bigr)^\ell \Bigr\} - \tr\Bigl\{ \bigl( \sqrt{R_0}\, q(0)\, \sqrt{R_0}\;\!\bigr)^\ell \Bigr\} \biggr|.
\end{align*}
Proposition~\ref{P:KdVcons} guarantees that $\alpha(\kappa;q(t))$ is conserved in time.  Thus, by \eqref{not super dumb}, \eqref{dumb norm}, and \eqref{Key Est},
\begin{align}\label{knee}
D(t;\kappa) &\leq  \tfrac{4\kappa^3}{3} \bigl\| \sqrt{R_0}\, q(0)\, \sqrt{R_0}\;\!\bigr\|_{\mathfrak I_2}^2 \\
&\quad\ \ \times \Bigl[ \bigl\| \sqrt{R_0}\, q(t)\, \sqrt{R_0}\;\!\bigr\|_\op + \bigl\| \sqrt{R_0}\, q(0)\, \sqrt{R_0}\;\!\bigr\|_\op\Bigr] \sum_{\ell=3}^\infty \bigl(\tfrac13\bigr)^{\ell-3}. \notag
\end{align}
To deduce \eqref{E:Dest1} from here, we may bound the operator norm by the Hilbert--Schmidt norm and apply \eqref{Key Est} and then \eqref{H-1kappa} to obtain
\begin{align*}
D(t;\kappa) &\lesssim \kappa^{3/2}\bigl\langle q(0), (-\partial_x^2 + 4\kappa^2)^{-1} q(0)\bigr\rangle^{3/2},
\end{align*}
from which \eqref{E:Dest1} follows immediately. To obtain \eqref{E:Dest2}, we employ the bound
\begin{equation*}
\bigl\| \sqrt{R_0}\, q\, \sqrt{R_0}\;\!\bigr\|_\op \leq \kappa^{-2} \|q\|_{L^\infty},
\end{equation*}
which is merely the product of the norms of each operator.
\end{proof}

\begin{theorem}\label{T:Besov}
Let $q$ be a Schwartz solution of KdV.  Then
\begin{align}\label{E:BT}
\| q(t) \|_{B^{s,2}_r} \lesssim \| q(0) \|_{B^{s,2}_r} \bigl( 1 + \| q(0) \|_{B^{s,2}_r}^2\bigr),
\end{align}
both when $s=-1$ and $1\leq r\leq 2$ and  when $-1<s<1$ and $1\leq r\leq \infty$.
\end{theorem}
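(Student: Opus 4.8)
The plan is to reduce everything to the frequency-localized quadratic forms $W(t;\kappa):=\langle q(t),\,w(-i\partial_x,\kappa)q(t)\rangle$ appearing in Lemmas~\ref{L:Besov2} and~\ref{L:Besov3}, and to control their variation in $t$ using the conservation of $\alpha$ as quantified by Proposition~\ref{P:D}. The bridge is a purely algebraic observation: writing $\Theta(\kappa;q):=\langle q,\tfrac{\kappa^2}{-\partial_x^2+4\kappa^2}q\rangle$, the identity $\tfrac{(\kappa/2)^2}{\xi^2+\kappa^2}=\tfrac{(\kappa/2)^2}{\xi^2+4(\kappa/2)^2}$ shows that each weight $w(\cdot;\kappa)$ in \eqref{w-1} and \eqref{w>0} is a fixed linear combination of $\Theta(\kappa;\cdot)$ and $\Theta(\kappa/2;\cdot)$; concretely $W(t;\kappa)=4\Theta(\tfrac\kappa2;q(t))-\Theta(\kappa;q(t))$ in the case $s=-1$ and $W(t;\kappa)=\Theta(\kappa;q(t))-\Theta(\tfrac\kappa2;q(t))$ for $-1<s<1$. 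On the line $\Theta(\kappa;q)=\kappa^3\,\tr\{(\sqrt{R_0}q\sqrt{R_0})^2\}$ exactly, while on the circle the two differ only by the conserved quantity $|\hat q(0)|^2$ and a bounded factor (cf. \eqref{R I2}, \eqref{C I2}); in either geometry one obtains
\begin{align*}
\bigl|W(t;\kappa)-W(0;\kappa)\bigr|\lesssim D(t;\kappa)+D(t;\tfrac\kappa2).
\end{align*}
This is where conservation enters, since $D$ measures exactly the failure of the quadratic term to be conserved.

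Throughout I set $\kappa_0:=2\bigl(1+90\|q(0)\|_{H^{-1}}^2\bigr)$, chosen so that both $\kappa_0 N$ and $\kappa_0 N/2$ exceed the threshold of Theorem~\ref{s<0} and Proposition~\ref{P:D} for every $N\in\NN$. Abbreviating $Y:=\|q(0)\|_{B^{s,2}_r}$, I treat three regimes. For $-1<s<0$ there is nothing new: the claim is precisely Corollary~\ref{C:neg}, since $(1+Y^2)^{|s|}\le 1+Y^2$. For $s=-1$ (so $1\le r\le 2$) I apply Lemma~\ref{L:Besov2} to $q(t)$, the inequality \eqref{E:Besov2a} carrying an absolute constant. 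Using $W(t;\kappa_0N)\le W(0;\kappa_0N)+|W(t)-W(0)|$ and $\sqrt{a+b}\le\sqrt a+\sqrt b$, the main term is $\lesssim_r\kappa_0\|q(0)\|_{B^{-1,2}_r}$ by \eqref{E:Besov2b}, which is $\lesssim(1+Y^2)Y$ since $\kappa_0\sim 1+\|q(0)\|_{H^{-1}}^2$ and $B^{-1,2}_r\hookrightarrow H^{-1}$ for $r\le 2$. The error is $\bigl\|N^{-1}[D(t;\kappa_0N)+D(t;\kappa_0N/2)]^{1/2}\bigr\|_{\ell^r(\NN)}$; estimating $D$ by \eqref{E:Dest1} with $\sigma=-1$ produces a summable series (the $N$-weight is $N^{-1/4}$) bounded by $\kappa_0^{3/4}\|q(0)\|_{H^{-1}}^{3/2}\lesssim(1+Y^2)^{3/4}Y^{3/2}$, itself $\lesssim(1+Y^2)Y$. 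This gives \eqref{E:BT} at $s=-1$.

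The substantive case is $0\le s<1$. Applying \eqref{E:Besov3a} to $q(t)$, the term $\|q(t)\|_{H^{-1}}$ is controlled by $(1+Y^2)Y$ via Theorem~\ref{s<0} and the embedding $B^{s,2}_r\hookrightarrow H^{-1}$; after the same $\sqrt{a+b}$ step the remainder splits into a main part bounded using \eqref{E:Besov3b} by $\kappa_0^{1-s}Y\lesssim(1+Y^2)Y$ (here $1-s\le 1$ is what forces $s\ge0$, explaining why the negative range is handled separately), plus the error $\mathcal E:=\kappa_0\bigl\|N^{s}[D(t;\kappa_0N)+D(t;\kappa_0N/2)]^{1/2}\bigr\|_{\ell^r(\NN)}$. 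For $0\le s<\tfrac34$ I estimate $D$ by \eqref{E:Dest1} with $\sigma\in(-\tfrac13,0)$ chosen according to $s$: the condition $\sigma>\tfrac{2s}{3}-\tfrac12$ makes the series $\sum_N N^{r(s-3/4-3\sigma/2)}$ summable, while $\sigma\ge-\tfrac13$ keeps the $\kappa_0$-power in check, yielding $\mathcal E\lesssim(1+Y^2)^{1/4-3\sigma/2}Y^{3/2}\lesssim(1+Y^2)Y$. For $\tfrac12<s<1$ I instead use \eqref{E:Dest2}; since $s>\tfrac12$ gives $B^{s,2}_r\hookrightarrow L^\infty$, the factors $\|q(t)\|_{L^\infty}+\|q(0)\|_{L^\infty}$ are $\lesssim\|q(t)\|_{B^{s,2}_r}+Y$, the prefactor $\kappa_0$ cancels the $\kappa^{-1}$ in $E_N^{1/2}$, and the series $\sum_N N^{r(s-1)}$ is summable precisely because $s<1$, so $\mathcal E\lesssim(\|q(t)\|_{B^{s,2}_r}+Y)^{1/2}Y$. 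The two ranges overlap and cover all of $[0,1)$.

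The main obstacle is closing the estimate for $s$ near $1$: there \eqref{E:Dest1} ceases to be summable, and the only available substitute \eqref{E:Dest2} reintroduces $\|q(t)\|_{L^\infty}$, hence $\|q(t)\|_{B^{s,2}_r}$ itself, on the right. I would resolve this by a Young/continuity bootstrap: writing $Z:=\|q(t)\|_{B^{s,2}_r}$ (finite, as $q(t)$ is Schwartz), the estimate has the schematic form $Z\lesssim(1+Y^2)Y+Z^{1/2}Y$, and $Z^{1/2}Y\le\tfrac12 Z+CY^2$ lets me absorb $\tfrac12 Z$ into the left side, leaving $Z\lesssim(1+Y^2)Y$. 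The remaining tasks — pinning down the exact constant in the $\Theta$-identity on the circle, verifying the admissibility of each $\sigma$, and checking the summability thresholds — are routine.
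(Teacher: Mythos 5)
Your proposal is correct and is, in skeleton, the paper's own proof: the same reduction through Lemmas~\ref{L:Besov2} and~\ref{L:Besov3}, the same dyadic family $\kappa=\kappa_0 N$, and the same control of the time-variation of the quadratic forms via Proposition~\ref{P:D}, using \eqref{E:Dest1} with a choice of $\sigma$ obeying $s<\tfrac34+\tfrac{3\sigma}{2}$ in the lower range and \eqref{E:Dest2} near $s=1$; your checks of the summability thresholds and of the admissible window $\sigma\in(-\tfrac13,0)$ are accurate. Two local choices differ from the paper, both legitimately. First, on the circle the paper invokes the estimate \eqref{C diff}, an additive $O(e^{-\kappa/2}\|q(0)\|_{H^{-1}}^2)$ error, whereas you observe that the extra term in \eqref{C I2} is a multiple of $|\hat q(0)|^2$, which is itself conserved (it is the first conservation law of KdV), and that the remaining discrepancy is only a multiplicative factor $1+O(e^{-\kappa})$; this is a clean, arguably tidier, variant. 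Second, and more substantively, in the top range the paper bounds $\|q(t)\|_{L^\infty}\lesssim\|q(t)\|_{B^{1/2,2}_1}$ and then controls this at time $t$ by \emph{initial} data using the already-established case $s=\tfrac12$, $r=1$ of the theorem, so its inequality is closed from the outset and in fact yields the slightly stronger bound $\|q(0)\|_{B^{s,2}_r}\bigl(1+\|q(0)\|_{L^2}^2\bigr)$; you instead keep $\|q(t)\|_{L^\infty}\lesssim Z:=\|q(t)\|_{B^{s,2}_r}$ for $s>\tfrac12$ and close by absorbing the $Z^{1/2}Y$ term, which is valid because $Z$ is finite for Schwartz solutions and the inequality holds pointwise in $t$, so no continuity argument is required. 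Your doubling of $\kappa_0$ to keep $\kappa_0 N/2$ above the threshold of Proposition~\ref{P:D} is a detail the paper leaves implicit (its weights $w(\cdot;\kappa)$ are exactly your combinations of $\Theta(\kappa)$ and $\Theta(\kappa/2)$), and your overlapping split $[0,\tfrac34)\cup(\tfrac12,1)$ versus the paper's split at $s=\tfrac34$ is immaterial.
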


\begin{proof}
This result has already been proven when $s=-1$ and $r=2$, as well as when $-1<s<0$ and $1\leq r\leq\infty$; see Theorem~\ref{s<0} and Corollary~\ref{C:neg}, respectively.  We will only consider the remaining cases here.

Throughout the proof we assume that
$$
\kappa \geq \kappa_0 = 1 + 90 \| q(0) \|_{H^{-1}}^2.
$$
We will also make repeated use of the following:
\begin{equation}\label{C diff}
\biggl| \kappa^3 \tr\Bigl\{ \bigl( \sqrt{R_0}\, q(t)\, \sqrt{R_0}\;\!\bigr)^2 \Bigr\}  - \bigl\langle q(t), \tfrac{ \kappa^2}{-\partial_x^2 + 4\kappa^2} q(t)\bigr\rangle \biggr| \lesssim e^{-\kappa/2} \| q(0) \|_{H^{-1}}^2,
\end{equation}
which follows from \eqref{R I2}, \eqref{C I2}, and \eqref{Key Est}.  Indeed, this difference is zero in the line case.

Let us begin with the cases $s=-1$ and $1\leq r<2$.  Choosing $w$ as in \eqref{w-1}, setting $\sigma=-1$ in \eqref{E:Dest1} and applying \eqref{C diff}, we have
\begin{align*}
\bigl\langle q(t), \,w(-i\partial_x,\kappa) q(t)\bigr\rangle  &\leq \bigl\langle q(0), \,w(-i\partial_x,\kappa) q(0) \bigr\rangle + O\bigl( e^{-\kappa/2} \|q(0)\|_{H^{-1}}^2 \bigr) \\
&\qquad + O\bigl( \kappa^{3/2} \|q(0)\|_{H^{-1}}^3 \bigr).
\end{align*}
But then by \eqref{E:Besov2a} and \eqref{E:Besov2b},
\begin{align*}
\| q(t) \|_{B^{-1,2}_r} &\lesssim \kappa_0 \| q(0) \|_{B^{-1,2}_r} + \|q(0)\|_{H^{-1}} + \kappa_0^{3/4} \|q(0)\|_{H^{-1}}^{3/2} \\
&\lesssim \| q(0) \|_{B^{-1,2}_r} \bigl( 1 + \| q(0) \|_{H^{-1}}^2 \bigr),
\end{align*}
from which \eqref{E:BT} follows.

Consider now the case $0\leq s<\frac34$ and $r$ arbitrary.  Proceeding directly as above, but choosing $w$ as in \eqref{w>0} yields
\begin{align*}
\bigl\langle q(t), \,w(-i\partial_x,\kappa) q(t)\bigr\rangle  &\leq \bigl\langle q(0), \,w(-i\partial_x,\kappa) q(0) \bigr\rangle + O\bigl( e^{-\kappa/2} \|q(0)\|_{H^{-1}}^2 \bigr) \\
&\qquad + O\bigl( \kappa^{-\frac32-3\sigma} \|q(0)\|_{H^\sigma}^3 \bigr),
\end{align*}
provided $-1\leq \sigma\leq 0$.  As $s<\frac34$, we may choose $\sigma\in[-1,0]$ so that $s<\tfrac34+\tfrac{3\sigma}2$, which allows us to sum the contribution of the last error term.  In this way, \eqref{E:Besov3a} and \eqref{E:Besov3b} yield
\begin{align*}
\| q(t) \|_{B^{s,2}_r} &\lesssim \|q(t)\|_{H^{-1}} + \kappa_0^{1-s} \| q(0) \|_{B^{s,2}_r} + \|q(0)\|_{H^{-1}} + \kappa_0^{\frac14-\frac{3\sigma}2} \|q(0)\|_{H^\sigma}^{3/2} \\
&\lesssim \| q(0) \|_{B^{s,2}_r} \bigl( 1 + \| q(0) \|_{H^{-1}}^2\bigr) + \kappa_0^{\frac14-\frac{3\sigma}2} \|q(0)\|_{H^\sigma}^{3/2}.
\end{align*}
(Note the use of the $s=-1$, $r=2$ result here.)  It is relatively easy to deduce \eqref{E:BT} from here.

To address the remaining case $\frac34\leq s<1$, we combine the preceding argument with \eqref{E:Dest2} and the estimate
$$
\| q(t) \|_{L^\infty} \lesssim \| \hat q(t) \|_{L^1} \lesssim \| q(t) \|_{B^{\frac12,2}_1} \lesssim  \| q(0) \|_{B^{\frac12,2}_1} \bigl( 1 + \| q(0) \|_{H^{-1}}^2\bigr).
$$
Proceeding in the manner just described, we see that
\begin{align*}
\bigl\langle q(t), \,w(-i\partial_x,\kappa) q(t)\bigr\rangle  &\leq \bigl\langle q(0), \,w(-i\partial_x,\kappa) q(0) \bigr\rangle + O\bigl( e^{-\kappa/2} \|q(0)\|_{L^2}^2 \bigr) \\
&\qquad + O\Bigl( \kappa^{-2} \| q(0) \|_{B^{\frac12,2}_1} \| q(0) \|_{L^2}^2 \bigl( 1 + \| q(0) \|_{H^{-1}}^2\bigr)\Bigr)
\end{align*}
and hence that
\begin{align*}
\| q(t) \|_{B^{s,2}_r} &\lesssim  \| q(0) \|_{B^{s,2}_r} \bigl( 1 + \| q(0) \|_{H^{-1}}^2\bigr) \\
&\qquad  + \| q(0) \|_{B^{\frac12,2}_1}^{1/2}  \| q(0) \|_{L^2} \bigl( 1 + \| q(0) \|_{H^{-1}}^2\bigr)^{1/2} \\
&\lesssim \| q(0) \|_{B^{s,2}_r} \bigl( 1 + \| q(0) \|_{L^2}^2\bigr),
\end{align*}
which is evidently stronger than the claimed \eqref{E:BT}.
\end{proof}

\section{AKNS/ZS examples}\label{S:4}

Many completely integrable PDE admit a Lax pair of the following form:
$$
\tfrac{d\ }{dt}  L(t;\kappa) = [P(t;\kappa), L(t;\kappa)] \qtq{with}
	L(t;\kappa) = \begin{bmatrix}-\partial+\kappa & i q(x) \\ \mp i\bar q(x) & -\partial-\kappa \end{bmatrix}
$$
and some operator pencil $P(t;\kappa)$.  The names AKNS and ZS originate in the surnames of the authors of \cite{MR0450815} and \cite{ZS}, respectively.  Examples of models that lie within this framework include the cubic nonlinear Schr\"odinger equation
\begin{align}
- i \tfrac{d\ }{dt} q &= -q'' \pm 2 |q|^2 q \tag{NLS}\label{NLS},
\end{align}
the modified KdV equation of Hirota \cite{Hirota}
\begin{align}
\tfrac{d\ }{dt} q &= - q''' \pm 6 |q|^2 q'   \tag{HmKdV}\label{HmKdV},
\end{align}
as well as the Sasa--Satsuma mKdV equation \cite{MR1104387}, and the sin-Gordon and sinh-Gordon equations.  Note that the Hirota and Sasa--Satsuma equations describe the evolution of a \emph{complex}-valued field.  They are distinct generalizations of the traditional mKdV equation
\begin{equation}
\tfrac{d\ }{dt} q = - q''' \pm 6 q^2 q',   \tag{mKdV}\label{mKdV}
\end{equation}
which is posed for a real-valued field.  We do not give explicit expressions for the operator pencils $P(t;\kappa)$ related to any of these models, because they will play no role in what follows.  Rather, we proceed as we did in the KdV case and give a direct proof of the conservation of the perturbation determinant.  For simplicity, we will restrict the exposition to the line case.

By analogy with what has gone before, we define
\begin{align}\label{AKNS alpha}
\alpha(\kappa;q) := \Re \sum_{\ell=1}^\infty \frac{(\mp 1)^{\ell-1}}{\ell} \tr\Bigl\{\bigl[(\kappa-\partial)^{-1/2} q (\kappa+\partial)^{-1} \bar q (\kappa-\partial)^{-1/2} \bigr]^\ell\Bigr\},
\end{align}
which formally represents
$$
\pm \Re \log\det\left( \begin{bmatrix} (-\partial+\kappa)^{-1} & 0 \\ 0 & (-\partial-\kappa)^{-1} \end{bmatrix}
	\begin{bmatrix}-\partial+\kappa & i q(x) \\ \mp i\bar q(x) & -\partial-\kappa \end{bmatrix} \right).
$$
Note that we do not renormalize the determinant here --- it is already quadratic in $q$.  The operators $(\kappa\pm\partial)^{-1}$ appearing here are defined via the Fourier transform and so exist for every $\kappa>0$.  We further define their square-roots via
$$
[(\kappa\pm\partial)^{-1/2} f]\widehat{\ }(\xi) = \frac1{\sqrt{\kappa \pm i \xi}} \,\hat f(\xi),
$$
where the complex square-root is determined via continuity and $\sqrt{\kappa}>0$.

Our first task is to guarantee convergence of the series defining $\alpha(\kappa)$ for $\kappa$ sufficiently large. This follows from our next lemma, because the operators
$$
 (\kappa-\partial)^{-1/2} q (\kappa+\partial)^{-1/2} \qtq{and} (\kappa+\partial)^{-1/2} \bar q (\kappa-\partial)^{-1/2}
$$
are intertwined by the unitary operator $U:f(x)\to f(-x)$, up to the replacement of $q(x)$ by $\bar q(-x)$.

\begin{lemma}\label{L:AKNS I2}
For $\kappa>0$ and $q(x)$ Schwartz,
\begin{equation}\label{AKNS I2}
\Bigl\|(\kappa-\partial)^{-1/2} q (\kappa+\partial)^{-1/2} \Bigr\|^2_{\mathfrak I_2(\R)} \approx \int \log\bigl(4 + \tfrac{\xi^2}{\kappa^2}\bigr)\frac{|\hat q(\xi)|^2\,d\xi}{\sqrt{4\kappa^2 + \xi^2}}.
\end{equation}
Here the symbol $\approx$ indicates that the ratio of the two sides lies between two positive absolute constants.
\end{lemma}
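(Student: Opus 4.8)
The plan is to pass to the Fourier side, where the Hilbert--Schmidt norm becomes a transparent double integral, and then to reduce the claim to a two-sided estimate on a single convolution integral. First I would record that the operators $(\kappa\mp\partial)^{-1/2}$ act as Fourier multipliers with symbols $(\kappa\mp i\xi)^{-1/2}$, whose modulus squared is $(\kappa^2+\xi^2)^{-1/2}$. Conjugating the operator $A:=(\kappa-\partial)^{-1/2} q (\kappa+\partial)^{-1/2}$ by the (unitary) Fourier transform turns multiplication by $q$ into convolution against $\hat q$; concretely,
\[
\widehat{Af}(\xi)=\tfrac{1}{\sqrt{2\pi}}\,(\kappa-i\xi)^{-1/2}\int_\R \hat q(\xi-\eta)\,(\kappa+i\eta)^{-1/2}\,\hat f(\eta)\,d\eta .
\]
Since the Fourier transform preserves the Hilbert--Schmidt norm, reading off the kernel of this operator and using \eqref{HS norm} gives
\[
\bigl\|A\bigr\|_{\mathfrak{I}_2}^2 = \frac{1}{2\pi}\iint_{\R^2}\frac{|\hat q(\xi-\eta)|^2}{\sqrt{\kappa^2+\xi^2}\,\sqrt{\kappa^2+\eta^2}}\,d\xi\,d\eta .
\]

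Next I would change variables $\zeta=\xi-\eta$ and carry out the $\eta$-integration for each fixed $\zeta$, which recasts the right-hand side as $\tfrac{1}{2\pi}\int_\R |\hat q(\zeta)|^2\,I(\zeta)\,d\zeta$ with
\[
I(\zeta)=\int_{\R}\frac{d\eta}{\sqrt{\kappa^2+(\eta+\zeta)^2}\,\sqrt{\kappa^2+\eta^2}} .
\]
Comparing with the right-hand side of \eqref{AKNS I2}, the lemma is therefore equivalent to the pointwise comparison $I(\zeta)\approx \log\bigl(4+\zeta^2/\kappa^2\bigr)\big/\sqrt{4\kappa^2+\zeta^2}$. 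The substitution $\eta=\kappa u$ removes $\kappa$ entirely: writing $a=\zeta/\kappa$ one finds $I(\zeta)=\kappa^{-1}J(a)$ with $J(a)=\int_\R (1+u^2)^{-1/2}(1+(u+a)^2)^{-1/2}\,du$, and it remains only to prove
\[
J(a)\approx \frac{\log(4+a^2)}{\sqrt{4+a^2}} .
\]

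This last estimate is the main obstacle, although it is entirely elementary. Note that $J$ is even (substitute $u\mapsto -u-a$), so I may assume $a\ge 0$; moreover $J=g*g$ for the even function $g(u)=(1+u^2)^{-1/2}$, so Cauchy--Schwarz gives $J(a)\le J(0)=\|g\|_{L^2}^2=\pi$. Together with the trivial lower bound $J(a)\ge \int_{-1}^1 g(u)g(u+a)\,du\gtrsim (1+a)^{-1}$, this settles the regime $a\lesssim 1$, where the right-hand side above is also comparable to a constant. For $a\gg 1$ I would split $\R$ into the three regions $\{|u|\le a/2\}$, $\{|u+a|\le a/2\}$, and the remainder; the symmetry $u\mapsto -u-a$ makes the first two contribute equally. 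On $\{|u|\le a/2\}$ the factor $\sqrt{1+(u+a)^2}$ is comparable to $a$, so this piece is comparable to $a^{-1}\int_{-a/2}^{a/2}(1+u^2)^{-1/2}\,du\approx a^{-1}\log a$, which furnishes both the upper and the lower bound of the right order.

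Finally, on the remaining region $\{|u|>a/2\}\cap\{|u+a|>a/2\}=\{u>a/2\}\cup\{u<-3a/2\}$ both factors are $\gtrsim |u|$ (for $u<-3a/2$ one uses $|u+a|=|u|-a>|u|/3$), so the integrand is $\lesssim |u|^{-2}$ and this piece contributes only $O(a^{-1})$, which is negligible against $a^{-1}\log a$. Since $\log(4+a^2)/\sqrt{4+a^2}\approx a^{-1}\log a$ for large $a$, combining the three regions yields $J(a)\approx a^{-1}\log a$ in this regime, completing the comparison and hence the proof of the lemma.
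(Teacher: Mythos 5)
Your proposal is correct and follows essentially the same route as the paper: Plancherel reduces the Hilbert--Schmidt norm to the double integral $\frac{1}{2\pi}\iint |\hat q(\xi-\eta)|^2 \bigl[(\kappa^2+\xi^2)(\kappa^2+\eta^2)\bigr]^{-1/2}\,d\xi\,d\eta$, after which everything rests on the two-sided estimate for the self-convolution of $(1+u^2)^{-1/2}$, which you verify by splitting into regions near and away from the singularities --- exactly the mechanism the paper uses (its split $|x|<2|y|$ versus $|x|>2|y|$ in the symmetric variables $\xi=\kappa(x+y)$, $\eta=\kappa(x-y)$ corresponds to your three-piece decomposition in the shifted variable). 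The only differences are organizational --- your rescaling to $J(a)$ and the explicit small-$a$ treatment via Cauchy--Schwarz --- and your write-up fills in details the paper leaves to the reader.
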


\begin{proof}
By Plancherel,
$$
\text{LHS\eqref{AKNS I2}} = \frac{1}{2\pi} \iint \frac{|\hat q(\xi-\eta)|^2}{\sqrt{(\kappa^2+\xi^2)(\kappa^2+\eta^2)}}\,d\xi\,d\eta .
$$

On the other hand,
$$
\int \frac{dx}{\sqrt{1+(x+y)^2}\sqrt{1+(x-y)^2}} \approx \frac{ \log(4+4y^2) }{\sqrt{1+y^2}},
$$
as one readily sees by breaking the region of integration into the pieces $|x|<2|y|$ and $|x|>2|y|$.
Note that the logarithmic growth originates from the former piece.  The lemma now follows by choosing $\xi=\kappa(x+y)$ and $\eta=\kappa(x-y)$.
\end{proof}

In view of the above, the series \eqref{AKNS alpha} converges (geometrically) and can be differentiated term-by-term as soon as $\kappa$ is large enough so that
\begin{align}\label{k small}
\int \log\bigl(4 + \tfrac{\xi^2}{\kappa^2}\bigr)\frac{|\hat q(\xi)|^2\,d\xi}{\sqrt{4\kappa^2 + \xi^2}} < c,
\end{align}
for some absolute constant $c>0$.

Unlike the KdV case, our basic operator here is not self-adjoint.  Thus the norm above does not tell us the size of the leading term in the power series \eqref{AKNS alpha}.  To fill this void, we prove the following:

\begin{lemma}\label{L:AKNS tr}
For $\kappa>0$ and $q(x)$ Schwartz,
\begin{equation}\label{AKNS tr}
\Re \tr\bigl\{(\kappa-\partial)^{-1} q (\kappa+\partial)^{-1} \bar q \bigr\} = \int \frac{2\kappa |\hat q(\xi)|^2\,d\xi }{4\kappa^2 + \xi^2}.
\end{equation}
\end{lemma}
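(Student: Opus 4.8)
The plan is to compute the trace directly from integral kernels, in keeping with the spirit of the paper. The two factors $(\kappa\mp\partial)^{-1}$ are Fourier multipliers whose Green's functions on $\R$ are elementary: solving $(\kappa-\partial)G=\delta$ directly (or evaluating $\tfrac1{2\pi}\int e^{i(x-y)\xi}(\kappa-i\xi)^{-1}\,d\xi$ by residues) gives the kernel $e^{\kappa(x-y)}\mathbf 1_{\{x<y\}}$ for $(\kappa-\partial)^{-1}$ and, symmetrically, $e^{-\kappa(x-y)}\mathbf 1_{\{x>y\}}$ for $(\kappa+\partial)^{-1}$. The operator inside the trace is a product of the Hilbert--Schmidt operators controlled in Lemma~\ref{L:AKNS I2}, hence of trace class, so I may evaluate the trace as the integral of its kernel over the diagonal.

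Next I would multiply these kernels together. The multiplication operators $q$ and $\bar q$ have kernels $q(x)\delta(x-y)$ and $\bar q(x)\delta(x-y)$, so the operator $(\kappa-\partial)^{-1} q (\kappa+\partial)^{-1}\bar q$ has kernel $\bigl[\int e^{\kappa(x-z)}\mathbf 1_{\{x<z\}}\,q(z)\,e^{-\kappa(z-y)}\mathbf 1_{\{z>y\}}\,dz\bigr]\bar q(y)$. Setting $y=x$ and integrating over $x$, the two exponentials combine so that the product of Green's functions collapses to $e^{2\kappa(x-z)}\mathbf 1_{\{x<z\}}=e^{-2\kappa(z-x)}\mathbf 1_{\{x<z\}}$, leaving
\[
\tr\bigl\{(\kappa-\partial)^{-1} q (\kappa+\partial)^{-1}\bar q\bigr\} = \iint_{x<z} e^{-2\kappa(z-x)}\,q(z)\,\bar q(x)\,dx\,dz.
\]

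Finally I would take the real part. Conjugating and relabeling $x\leftrightarrow z$ turns the region $\{x<z\}$ into $\{x>z\}$, so that $\Re\,\tr$ symmetrizes into $\tfrac12\iint e^{-2\kappa|x-z|}q(z)\bar q(x)\,dx\,dz$; the absolute value appears precisely because the bare trace sees only half of the plane. Recognizing this as $\tfrac12\int \bar q\,(g* q)\,dx$ with $g(u)=e^{-2\kappa|u|}$ and applying Plancherel together with the standard transform $\int e^{-2\kappa|u|}e^{-i\xi u}\,du=\tfrac{4\kappa}{4\kappa^2+\xi^2}$ yields $\int \tfrac{2\kappa|\hat q(\xi)|^2}{4\kappa^2+\xi^2}\,d\xi$, which is RHS\eqref{AKNS tr}.

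The computation is entirely routine, so I do not expect a genuine obstacle; the only step requiring any care is the role of the real part, which is what converts the one-sided kernel $e^{-2\kappa(z-x)}\mathbf 1_{\{x<z\}}$ into the symmetric $e^{-2\kappa|x-z|}$ and thereby produces the clean multiplier. As a check (and an alternative that stays closer to the frequency-side computation in the preceding lemma), one may instead expand the trace by Plancherel in Fourier variables and reduce it to the single contour integral $\int_\R \frac{d\zeta}{(\kappa-i\zeta-i\mu)(\kappa+i\zeta)}=\frac{2\pi}{2\kappa-i\mu}$ (one simple pole in the upper half-plane), after which taking the real part of $\int \frac{|\hat q(\mu)|^2}{2\kappa-i\mu}\,d\mu$ again gives the stated formula.
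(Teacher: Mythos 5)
Your proof is correct and follows essentially the same route as the paper's: the same one-sided kernels for $(\kappa\mp\partial)^{-1}$, the same evaluation of the trace as $\iint_{x<z}e^{2\kappa(x-z)}q(z)\bar q(x)\,dx\,dz$, the same symmetrization via the real part to produce $\tfrac12\iint e^{-2\kappa|x-z|}q(z)\bar q(x)\,dx\,dz$, and a Plancherel computation that is exactly the content of the paper's appeal to Proposition~\ref{P:free}. Your frequency-side contour-integral verification is a nice independent check, but it does not change the fact that the argument coincides with the paper's.
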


\begin{proof}
It is easy to verify that for $\kappa>0$, the operator $(\kappa-\partial)^{-1}$ admits the following integral kernel
$$
k_-(x,y) = \begin{cases} e^{\kappa(x-y)} & \text{: if $x<y$} \\ 0 & \text{: otherwise,} \end{cases}
$$
while its adjoint $(\partial+\kappa)^{-1}$ admits the kernel
$$
k_+(x,y) = \begin{cases} e^{-\kappa(x-y)} & \text{: if $x>y$} \\ 0 & \text{: otherwise.} \end{cases}
$$
From this and Proposition~\ref{P:free}, we deduce that
\begin{align*}
\text{LHS\eqref{AKNS tr}}  &= \Re \iint_{x<y} e^{2\kappa(x-y)} q(y)\bar q(x)\,dx\,dy \\
&= \tfrac12 \iint e^{-2\kappa|x-y|} q(y)\bar q(x)\,dx\,dy  = \text{RHS\eqref{AKNS tr}},
\end{align*}
which completes the proof of the lemma.
\end{proof}

The next two propositions guarantee the constancy of the perturbation determinant along the flows generated by \eqref{NLS} and \eqref{HmKdV}, respectively.

\begin{prop}[Conservation of $\alpha$ for NLS]\label{P:NLS dot} Let $q(t,x)$ denote a Schwartz-space solution to \eqref{NLS}.  Then for $\kappa$ large enough so that \eqref{k small} holds,
\begin{equation*}
\ddt \alpha(\kappa;q(t)) = 0.
\end{equation*}
\end{prop}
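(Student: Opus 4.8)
The plan is to mimic the proof of Proposition~\ref{P:KdVcons}: differentiate the series term by term and then exhibit an algebraic cancellation between the dispersive and nonlinear parts of the evolution. Since \eqref{k small} guarantees that the series \eqref{AKNS alpha} converges geometrically and may be differentiated term by term, I would first record that derivative in a usable form. Writing $R_\pm := (\kappa\pm\partial)^{-1}$ and using cyclicity of the trace to fold the outer square-roots back in, the $\ell$-th summand becomes $\tr\{(R_- q R_+ \bar q)^\ell\}$. Applying the product rule, the $\ell$ copies of $q$ contribute equally and the $\ell$ copies of $\bar q$ contribute equally, so the factor $\tfrac1\ell$ cancels and
\begin{align*}
\ddt \alpha = \Re\sum_{\ell=1}^\infty (\mp1)^{\ell-1}\Bigl[ \tr\{(R_- q R_+ \bar q)^{\ell-1} R_- \dot q R_+ \bar q\} + \tr\{(R_- q R_+ \bar q)^{\ell-1} R_- q R_+ \dot{\bar q}\}\Bigr].
\end{align*}
Into this I substitute the NLS equation in the form $\dot q = -i q'' \pm 2i|q|^2 q$ and its conjugate, splitting $\ddt\alpha$ into a dispersive contribution (from $q''$ and $\bar q''$) and a nonlinear one (from $|q|^2 q$ and $|q|^2\bar q$).

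The engine of the dispersive part is the operator identity
\begin{align*}
R_- q'' R_+ = 4\kappa^2 R_- q R_+ - 2\kappa\bigl(q R_+ + R_- q\bigr) - \bigl(q' R_+ + R_- q'\bigr),
\end{align*}
which I would derive by writing $q'' = [\partial,[\partial,q]]$ and repeatedly using $\partial = \kappa - (\kappa-\partial) = (\kappa+\partial) - \kappa$ to absorb each derivative into the flanking resolvent (exactly as $-q'''$ was rewritten in the proof of Proposition~\ref{P:KdVcons}); the analogous identity for $R_+ \bar q'' R_-$ follows by the symmetry $\partial\mapsto-\partial$, $q\mapsto\bar q$. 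The leading pieces $4\kappa^2 R_- q R_+$ and $4\kappa^2 R_+ \bar q R_-$ preserve the operator structure and, after recognising that $\tr\{(R_+\bar q R_- q)^\ell\} = \tr\{(R_- q R_+\bar q)^\ell\}$, contribute $-4i\kappa^2$ and $+4i\kappa^2$ times the same trace; hence they cancel at each order $\ell$. This cancellation is the first place the non-self-adjoint, two-resolvent structure pays off and it has no counterpart in the KdV argument.

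What remain are the ``unbalanced'' terms $-2\kappa(qR_+ + R_- q) - (q'R_+ + R_- q')$ (and their conjugates), each carrying one fewer resolvent than a full block. The plan is to show these telescope: a degree count shows the dispersive leftover at order $\ell$ has the same number of factors as the nonlinear insertion $R_-\,|q|^2 q\, R_+$ at order $\ell-1$, and I expect a telescoping identity---the analogue of \eqref{E:telescope}---equating them up to an explicit constant. The signs then conspire, the crucial relative sign being supplied by the $\mp$ in the NLS nonlinearity, so that when summed against the coefficients $(\mp1)^{\ell-1}$ the telescoped dispersive terms cancel the explicit nonlinear contributions at every order (this is why the argument runs identically in the focusing and defocusing cases). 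Only a boundary term at the lowest order survives, and this I expect to vanish by an eyepiece-type identity analogous to \eqref{E:eyepiece}: the $q'$ pieces assemble into $\int (|q|^2)'\,dx = 0$, while the $\kappa$ pieces cancel between the $q$- and $\bar q$-derivative contributions once the explicit kernels of $R_\pm$ from Lemma~\ref{L:AKNS tr} are used.

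The main obstacle is the bookkeeping of this telescoping-and-cancellation step. Because the operator alternates $q$ with $\bar q$ and the resolvent $R_-$ with $R_+$, one must track precisely how cyclicity re-threads each unbalanced term, verify that the dispersive residue at order $\ell$ matches the nonlinear insertion at order $\ell-1$ with exactly the right constant and sign, and confirm the vanishing of the lowest-order boundary term. This is where complete integrability of NLS is genuinely invoked and, as in the KdV case, no purely abstract appeal to the Lax structure \eqref{unitary} appears to substitute for it.
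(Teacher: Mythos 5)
Your architecture is the right one and matches the paper's: differentiate \eqref{AKNS alpha} term by term, use cyclicity to fold in the square roots, split $\dot q=-iq''\pm 2i|q|^2q$ into dispersive and nonlinear parts, and handle the lowest-order dispersive term separately by explicit kernels. Your engine identity is also correct: multiplying $4\kappa^2 R_- q R_+ -2\kappa(qR_+ + R_-q)-(q'R_+ + R_-q')$ on the left by $(\kappa-\partial)$ and on the right by $(\kappa+\partial)$ does return $q''$, and the $4\kappa^2$ pieces genuinely cancel at each order between the $q''$ and $\bar q''$ contributions after cycling. The gap is the telescoping step, which you assert but do not prove, and whose supporting ``degree count'' is false. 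Each of your unbalanced pieces $qR_+$, $R_-q$, $q'R_+$, $R_-q'$ retains exactly one of the two flanking resolvents, so the leftover at order $\ell$ contains $2\ell-1$ resolvents, an odd number, while the nonlinear insertion at order $\ell-1$ contains the even number $2\ell-2$. Worse, after cycling the emergent $\bar q\, q$ factor in your leftovers is flanked by two resolvents of the \emph{same} sign (configurations $R_+ f R_+$ or $R_- f R_-$), whereas the nonlinear traces are strictly alternating ($R_- f R_+$). Since cyclicity preserves the multiset of factors, no amount of re-threading the trace can equate these monomials ``up to an explicit constant'': the term-by-term matching you describe fails as stated.

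The missing idea is one further commutator move: writing $q'=(\kappa+\partial)q-q(\kappa+\partial)=q(\kappa-\partial)-(\kappa-\partial)q$ inside your leftover terms regenerates a bare, resolvent-free insertion $2q$ (resp.\ $2\bar q$), and it is exactly this collapsed term --- with both flanking resolvents gone --- that matches the nonlinearity at one order lower; the residual one-sided terms must then be shown to cancel pairwise between the $q''$- and $\bar q''$-sides. The paper accomplishes all of this in one stroke via the identity $q'' = q(\partial^2-2\kappa\partial-\kappa^2)+(\partial^2+2\kappa\partial-\kappa^2)q+2(\kappa-\partial)q(\kappa+\partial)$: the last term collapses immediately, $R_-\,2(\kappa-\partial)q(\kappa+\partial)\,R_+=2q$, producing RHS\eqref{NLS tele}, while the two constant-coefficient pieces cancel against their $\bar q''$ partners by commutativity and cycling. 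One additional caveat: at $\ell=1$ your four unbalanced pieces are not individually trace class --- for instance $q\,(\kappa+\partial)^{-1/2}$ fails to be Hilbert--Schmidt because $\int|\kappa+i\xi|^{-1}\,d\xi$ diverges, and the kernel $k_+$ of Lemma~\ref{L:AKNS tr} jumps on the diagonal, so ``integrating the kernel over the diagonal'' is ambiguous for a lone resolvent flanked by functions. The lowest-order term must therefore be treated unsplit, by the direct kernel computation and integration by parts that constitute the paper's proof of \eqref{NLS head}; your sketch of the boundary term points in this direction but implicitly applies the decomposition where it is not licit.
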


\begin{proof}
As in the treatment of the KdV case, it suffices to show that
\begin{align}\label{NLS head}
\tr\bigl\{  (\kappa-\partial)^{-1} q\,'' (\kappa+\partial)^{-1} \bar q - (\kappa-\partial)^{-1} q (\kappa+\partial)^{-1} \bar q\,''\bigr\}&=0
\end{align}
and that for each $\ell\geq 1$,
\begin{equation}\label{NLS tele}
\left\{\begin{aligned}
&\tr\Bigl\{ \bigl[(\kappa-\partial)^{-1} q (\kappa+\partial)^{-1} \bar q \bigr]^{\ell}  (\kappa-\partial)^{-1} q\,'' (\kappa+\partial)^{-1} \bar q \\
&\qquad  - \bigl[(\kappa-\partial)^{-1} q (\kappa+\partial)^{-1} \bar q \bigr]^{\ell}  (\kappa-\partial)^{-1} q (\kappa+\partial)^{-1} \bar q\,'' \Bigr\} \\
{}={}& \tr\Bigl\{\pm \bigl[(\kappa-\partial)^{-1} q (\kappa+\partial)^{-1} \bar q \bigr]^{\ell-1}  (\kappa-\partial)^{-1} 2 |q|^2 q (\kappa+\partial)^{-1} \bar q \\
&\qquad  \mp \bigl[(\kappa-\partial)^{-1} q (\kappa+\partial)^{-1} \bar q \bigr]^{\ell-1}  (\kappa-\partial)^{-1} q (\kappa+\partial)^{-1} 2 |q|^2  \bar q \Bigr\}.
\end{aligned}\right.
\end{equation}

To verify \eqref{NLS head} we argue as in the proof of Lemma~\ref{L:AKNS tr} and then integrate by parts:
\begin{align*}
\text{LHS\eqref{NLS head}} &= \iint_{x<y} e^{2\kappa(x-y)} [q''(y)\bar q(x)- q(y)\bar q''(x)]\,dx\,dy \\
&= \iint e^{-2\kappa|x-y|}\signum(y-x) q''(y)\bar q(x) \,dx\,dy \\
&= - \iint e^{-2\kappa|x-y|}\signum(y-x) q'(y)\bar q'(x) \,dx\,dy =0.
\end{align*}

The veracity of \eqref{NLS tele} follows readily from the elementary operator identities:
\begin{align*}
     q'' &= q (\partial^2-2\kappa\partial-\kappa^2) + (\partial^2+2\kappa\partial-\kappa^2) q  + 2 (\kappa-\partial) q (\kappa+\partial),\\
\bar q'' &= (\partial^2-2\kappa\partial-\kappa^2) \bar q + \bar q (\partial^2+2\kappa\partial-\kappa^2) + 2 (\kappa+\partial) \bar q (\kappa-\partial),
\end{align*}
valid for all $\kappa\in\R$.  (The second identity here follows from the first by complex conjugation and reversing the sign of $\kappa$.)  Indeed, the last term in each identity produces one of the terms on RHS\eqref{NLS tele}, albeit in the opposite order in which they appear there.  The net contribution of the remaining terms in each identity is zero.  More specifically, the contribution of the first term in the identity for $q''$ precisely cancels that of the first term from the identity for $\bar q''$ due to the commutativity of constant-coefficient differential operators.  Similarly, the second term from the $q''$ identity cancels its partner in the $\bar q''$ identity, after additionally cycling the trace.
\end{proof}

\begin{prop}[Conservation of $\alpha$ for Hirota mKdV]\label{P:HmKdV dot} Let $q(t,x)$ denote a Schwartz-space solution to \eqref{HmKdV}.  Then for $\kappa$ large enough so that \eqref{k small} holds,
\begin{equation*}
\ddt \alpha(\kappa;q(t)) = 0.
\end{equation*}
\end{prop}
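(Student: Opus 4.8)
The plan is to follow the proof of Proposition~\ref{P:NLS dot} almost verbatim, altering only what the different time derivative forces upon us. Along the \eqref{HmKdV} flow one has $\dot q = -q''' \pm 6|q|^2 q'$ and hence, by complex conjugation, $\dot{\bar q} = -\bar q''' \pm 6|q|^2 \bar q'$. Exactly as for NLS, Lemma~\ref{L:AKNS I2} together with hypothesis~\eqref{k small} guarantees that the series~\eqref{AKNS alpha} converges geometrically and may be differentiated term by term; combining the two factors $(\kappa-\partial)^{-1/2}$ at each junction into a single $(\kappa-\partial)^{-1}$ and cycling the trace then reduces the claim to two statements. The first is a \emph{head} identity, namely that the $\ell=1$ contribution
\[
\Re\Bigl[\tr\bigl\{(\kappa-\partial)^{-1}(-q''')(\kappa+\partial)^{-1}\bar q\bigr\} + \tr\bigl\{(\kappa-\partial)^{-1}q(\kappa+\partial)^{-1}(-\bar q''')\bigr\}\Bigr]=0,
\]
the analogue of~\eqref{NLS head} (and morally of~\eqref{E:eyepiece}). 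The second is a \emph{telescoping} identity, valid for each $\ell\geq 2$, matching the dispersive contribution of $-q'''$, $-\bar q'''$ at level $\ell$ against the nonlinear contribution of $\pm6|q|^2 q'$, $\pm6|q|^2\bar q'$ at level $\ell-1$, in the spirit of~\eqref{NLS tele} and~\eqref{E:telescope}. Granting these, the sum collapses just as it does for NLS, leaving only the (vanishing) head term.

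The engine for the telescoping identity is a pair of purely algebraic operator identities expressing $q'''$ and $\bar q'''$ through the first-order operators $\kappa\mp\partial$; these play the role that the displayed $q''$, $\bar q''$ identities play in Proposition~\ref{P:NLS dot} and that the cubic commutator identity plays in Proposition~\ref{P:KdVcons}. As before, the identity for $\bar q'''$ will follow from that for $q'''$ by conjugating and sending $\kappa\mapsto-\kappa$. Each identity is arranged so that, once sandwiched between $(\kappa-\partial)^{-1}$ and $(\kappa+\partial)^{-1}$, its terms split into two groups: terms of the form $q\cdot M$ and $M\cdot q$ with $M$ constant-coefficient, which cancel in pairs after the constant-coefficient operators are commuted past one another and the trace is cycled (precisely the cancellation described after~\eqref{NLS tele}); and a surviving group carrying explicit factors $\kappa\mp\partial$ that annihilate a neighbouring resolvent via $(\kappa\mp\partial)^{-1}(\kappa\mp\partial)=\textrm{Id}$. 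It is this resolvent cancellation---the same degree-shifting device underlying~\eqref{E:telescope}---that lets a level-$\ell$ expression be rewritten at level $\ell-1$: the freed derivative from $-q'''$ is brought into contact with the adjacent $q$ and $\bar q$ supplied by the neighbouring copy of $(\kappa-\partial)^{-1}q(\kappa+\partial)^{-1}\bar q$, assembling precisely the sandwiched $\pm6|q|^2 q'$. A bookkeeping check confirms both sides carry $\ell$ factors of $q$ and $\ell$ of $\bar q$, as they must.

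The head identity is then dispatched exactly as~\eqref{NLS head} was. Using the explicit kernels $k_\mp$ of $(\kappa\mp\partial)^{-1}$ recorded in the proof of Lemma~\ref{L:AKNS tr}, the two traces become integrals of $e^{2\kappa(x-y)}$ against $q'''(y)\bar q(x)$ and $q(y)\bar q'''(x)$ over the half-plane $x<y$; after symmetrizing in the two variables and integrating by parts three times (an \emph{odd} number, which is what produces the crucial sign flip), one finds the combined integral equals the negative of its own complex conjugate, hence is purely imaginary. The outer $\Re$ therefore annihilates it.

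I expect the main obstacle to be the derivation of the cubic operator identity for $q'''$ and the attendant verification of the telescoping. Because the dispersion is now third order rather than second, and because the nonlinearity $\pm6|q|^2q'$ itself carries a derivative---unlike the derivative-free $\pm2|q|^2q$ of NLS---the identity has strictly more terms, so more care is needed to check that every term not feeding the telescoping cancels and that the surviving terms reassemble with the correct sign, keeping track simultaneously of the $\pm$ in~\eqref{HmKdV}, the alternating $(\mp1)^{\ell-1}$ in~\eqref{AKNS alpha}, and the outer $\Re$. No new analytic input is required, however: as in the KdV and NLS arguments, the manipulations amount to nothing beyond integration by parts and Fubini's theorem.
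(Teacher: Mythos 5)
Your overall route is the paper's own: justify term-by-term differentiation of \eqref{AKNS alpha} via Lemma~\ref{L:AKNS I2} and \eqref{k small}, reduce the claim to the head identity \eqref{HmKdV head} plus a telescoping identity \eqref{HmKdV tele} pairing the dispersive contribution at one level with the nonlinear contribution one level down, prove the head using the kernels $k_\pm$ from Lemma~\ref{L:AKNS tr}, and drive the telescoping with algebraic identities for $q'''$, $\bar q\,'''$ whose outer factors $\kappa\mp\partial$ annihilate adjacent resolvents. Your head argument is a correct minor variant: the paper integrates by parts three times and shows the trace itself vanishes exactly (the boundary terms combine into integrals of complete derivatives), whereas you conclude only that it is purely imaginary and invoke the outer $\Re$ in \eqref{AKNS alpha} --- which indeed suffices.

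The genuine gap is in the telescoping step, which you defer and whose mechanism you describe inaccurately. It is not the case that the freed block from $-q'''$ ``assembles precisely the sandwiched $\pm 6|q|^2q'$.'' Substituting the identity
\[
q''' = - q\,(\partial^3-3\kappa\partial^2+3\kappa^2\partial+3\kappa^3) + (\partial^3+3\kappa\partial^2+3\kappa^2\partial-3\kappa^3)\,q + (\kappa-\partial)\,[3q'+6\kappa q]\,(\kappa+\partial)
\]
(and its conjugate with $\kappa\mapsto-\kappa$), cancelling the constant-coefficient pairs, collapsing the resolvents, and cycling leaves the multiplication blocks $\bar q^{\,2}[3q'+6\kappa q]$ and $q^2[3\bar q\,'-6\kappa \bar q]$ attached to a level-$(\ell-1)$ string. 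Note the derivative-free $6\kappa$-cubic terms: no direct rearrangement can turn these into a sandwiched $\pm 6|q|^2 q'$, so the naive matching you sketch stalls with unmatched $\kappa$-weighted terms. Because the HmKdV nonlinearity itself carries a derivative, the paper requires a \emph{second} pair of operator identities, applied to the right-hand side of \eqref{HmKdV tele},
\[
6 |q|^2 q' = -\,3 |q|^2 q\, (\kappa+\partial) - 3 (\kappa-\partial)\, |q|^2 q - q^2[3\bar q\,' - 6\kappa \bar q],
\]
together with its conjugate; the first two pieces of each cancel in pairs, and only then do both sides of \eqref{HmKdV tele} meet at the same intermediate normal form, the $6\kappa$-terms from the $q'''$ identity being absorbed exactly by those from the nonlinearity. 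This decomposition of the nonlinearity is a separate idea with no analogue in the NLS proof you model on (where $2|q|^2q$ enters undecomposed), and it is the missing ingredient your proposal would need to supply.
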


\begin{proof}
Our arguments parallel the proof of Proposition~\ref{P:NLS dot} very closely.   As there, it suffices to show that
\begin{align}
\tr\bigl\{  (\kappa-\partial)^{-1} q\,''' (\kappa+\partial)^{-1} \bar q + (\kappa-\partial)^{-1} q (\kappa+\partial)^{-1} \bar q\,'''\bigr\}&=0 \label{HmKdV head}
\end{align}
and that for each $\ell\geq 1$,
\begin{equation}\label{HmKdV tele}
\left\{\begin{aligned}
&\tr\Bigl\{ \bigl[(\kappa-\partial)^{-1} q (\kappa+\partial)^{-1} \bar q \bigr]^{\ell}  (\kappa-\partial)^{-1} q\,''' (\kappa+\partial)^{-1} \bar q \\
&\quad\ \ + \bigl[(\kappa-\partial)^{-1} q (\kappa+\partial)^{-1} \bar q \bigr]^{\ell}  (\kappa-\partial)^{-1} q (\kappa+\partial)^{-1} \bar q\,''' \Bigr\} \\
{}={}& \tr\Bigl\{-\bigl[(\kappa-\partial)^{-1} q (\kappa+\partial)^{-1} \bar q \bigr]^{\ell-1}  (\kappa-\partial)^{-1} 6 |q|^2 q' (\kappa+\partial)^{-1} \bar q \\
&\qquad  - \bigl[(\kappa-\partial)^{-1} q (\kappa+\partial)^{-1} \bar q \bigr]^{\ell-1}  (\kappa-\partial)^{-1} q (\kappa+\partial)^{-1} 6 |q|^2  \bar q' \Bigr\}.
\end{aligned}\right.
\end{equation}

Using the integral kernels introduced in the proof of Lemma~\ref{L:AKNS tr} and integrating by parts repeatedly shows
\begin{align*}
\text{LHS\eqref{HmKdV head}} &= \iint_{x<y} e^{2\kappa(x-y)} [q'''(y)\bar q(x) + q(y)\bar q'''(x)]\,dx\,dy \\
&= \int_\R [q(z)\bar q''(z)- q''(z)\bar q(z)] \,dz \\
&\qquad +  2\kappa \iint_{x<y} e^{2\kappa(x-y)} [q''(y)\bar q(x) - q(y)\bar q''(x)]\,dx\,dy \\
&= 0 - 2\kappa  \int_\R [q'(z)\bar q(z) + q(z)\bar q'(z)] \,dz \\
&\qquad +  4\kappa^2 \iint_{x<y} e^{2\kappa(x-y)} [q'(y)\bar q(x) + q(y)\bar q'(x)]\,dx\,dy \\
&= 0 - 4\kappa^2  \int_\R [q(z)\bar q(z) - q(z)\bar q(z)] \,dz \\
&\qquad +  8\kappa^3 \iint_{x<y} e^{2\kappa(x-y)} [q(y)\bar q(x) - q(y)\bar q(x)]\,dx\,dy  \\
&=0.
\end{align*}
Note that the $z$ integrals above arise by combining the boundary terms that appear when integrating by parts either in $x$ or $y$.  They themselves vanish because they are integrals of complete derivatives.

We now turn to \eqref{HmKdV tele}.  As a first step, we employ the identities:
\begin{align*}
q''' &= - q (\partial^3-3\kappa\partial^2+3\kappa^2\partial+3\kappa^3) + (\partial^3+3\kappa\partial^2+3\kappa^2\partial-3\kappa^3) q  \\
&\qquad      + (\kappa-\partial) [3 q'+6\kappa q] (\kappa+\partial),\\
\bar q''' &= + (\partial^3-3\kappa\partial^2+3\kappa^2\partial+3\kappa^3) \bar q - \bar q (\partial^3+3\kappa\partial^2+3\kappa^2\partial-3\kappa^3)   \\
&\qquad      + (\kappa+\partial) [3 \bar q'-6\kappa \bar q] (\kappa-\partial).
\end{align*}
(As previously, the second identity here follows from the first by complex conjugation and reversing the sign of $\kappa$.)  The contribution of the first two terms from each identity cancel one another.  In this way, we see that
\begin{align*}
\text{LHS\eqref{HmKdV tele}} =
&\tr\Bigl\{ \bigl[(\kappa-\partial)^{-1} q (\kappa+\partial)^{-1} \bar q \bigr]^{\ell-1}  (\kappa-\partial)^{-1} q (\kappa+\partial)^{-1} \bar q^2[3q'+6\kappa q] \\
&\quad \ \ + \bigl[(\kappa-\partial)^{-1} q (\kappa+\partial)^{-1} \bar q \bigr]^{\ell-1}  (\kappa-\partial)^{-1} q^2[3\bar q'-6\kappa \bar q] (\kappa+\partial)^{-1} \bar q \Bigr\} .
\end{align*}
We will now reduce RHS\eqref{HmKdV tele} to the same form.  This relies on the following:
\begin{align*}
6 |q|^2 q' &= -  3 |q|^2 q (\kappa+\partial) - 3 (\kappa-\partial) |q|^2 q  - q^2[3\bar q' - 6\kappa \bar q], \\
6 |q|^2 \bar q' &= + 3 (\kappa+\partial) |q|^2 \bar q +  3 |q|^2 \bar q (\kappa-\partial) - \bar q^2[3q' + 6\kappa q].
\end{align*}
Plugging these identities into RHS\eqref{HmKdV tele}, we see that the net contribution of the two first terms is zero and likewise, that of the pair of second terms.  The final terms in each identity produce each of the two terms appearing in the formula for LHS\eqref{HmKdV tele} above, albeit in reversed order.  Thus
$$
\text{LHS\eqref{HmKdV tele}} = \text{RHS\eqref{HmKdV tele}}
$$
and the proposition is proved.
\end{proof}

\begin{theorem}\label{T:AKNS Besov}
Fix $-\tfrac12 < s <0$ and $1\leq r\leq \infty$ and let $q(t)$ be a Schwartz solution to \eqref{NLS} or \eqref{HmKdV}.  Then
\begin{align*}
\| q(t) \|_{B^{s,2}_r} \lesssim  \| q(0) \|_{B^{s,2}_r} \Bigl( 1 + \| q(0) \|_{B^{s,2}_r}^2 \Bigr)^{\frac{|s|}{1-2|s|}}.
\end{align*}
\end{theorem}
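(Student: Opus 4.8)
The plan is to mimic the reconstruction used for KdV in Corollary~\ref{C:neg} and Theorem~\ref{T:Besov}, adapting it to the non-self-adjoint setting. Write $M := (\kappa-\partial)^{-1/2} q (\kappa+\partial)^{-1/2}$ and $\tilde M := (\kappa+\partial)^{-1/2}\bar q(\kappa-\partial)^{-1/2}$, so that the operator inside \eqref{AKNS alpha} is $B := M\tilde M$. By the intertwining noted before Lemma~\ref{L:AKNS I2}, we have $\|M\|_{\mathfrak{I}_2} = \|\tilde M\|_{\mathfrak{I}_2} =: \rho^{1/2}$, where $\rho = \rho(\kappa;q)$ is the quantity appearing in \eqref{AKNS I2}. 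Since $B^\ell$ is a product of $2\ell$ Hilbert--Schmidt operators, Lemma~\ref{L:dumb} gives $|\tr(B^\ell)|\le\rho^\ell$, so for $\rho\le\tfrac12$ the series \eqref{AKNS alpha} splits as $\alpha(\kappa;q) = \beta(\kappa;q) + E(\kappa;q)$, where $\beta := \Re\tr(B)$ is identified by Lemma~\ref{L:AKNS tr} (after cycling the trace) as $\int \frac{2\kappa|\hat q(\xi)|^2}{4\kappa^2+\xi^2}\,d\xi$, and the tail obeys $|E(\kappa;q)|\lesssim\rho(\kappa;q)^2$.

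The leading term $\beta$ plays the role of the KdV low-frequency quantity: a direct computation shows $N\beta(\kappa_0 N;q) = \tfrac{2}{\kappa_0}\langle q, \tfrac{\kappa_0^2 N^2}{-\partial_x^2 + 4\kappa_0^2 N^2} q\rangle$, so Lemma~\ref{L:Besov1} yields, for $-\tfrac12<s<0$ and $\kappa_0\ge1$,
$$\| q\|_{B^{s,2}_r} \sim_{s,\kappa_0} \Bigl[ \sum_{N\in\NN} N^{rs}\bigl(N\beta(\kappa_0 N;q)\bigr)^{r/2}\Bigr]^{1/r},$$
with the lower bound free of $\kappa_0$ and the upper bound carrying a factor $\kappa_0^{|s|}$. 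Next I would bound the tail: weighing the symbol in \eqref{AKNS I2} against the $B^{s,2}_r$ weight gives $\rho(\kappa;q)\lesssim_s \kappa^{-(1-2|s|)}\|q\|_{B^{s,2}_r}^2$. This is \emph{precisely} the step forcing $s>-\tfrac12$: at high frequency the symbol behaves like $|\xi|^{-1}\log(|\xi|/\kappa)$, whose product with the weight $|\xi|^{2|s|}$ stays bounded exactly when $1-2|s|>0$; at $s=-\tfrac12$ the surviving logarithm diverges, which is what motivates the log-refined spaces of Theorem~\ref{T:XY}.

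With these ingredients, conservation of $\alpha$ (Propositions~\ref{P:NLS dot} and~\ref{P:HmKdV dot}) together with $|E|\lesssim\rho^2$ gives, for every dyadic $N$ and every $\kappa_0$ large enough that $\rho(\kappa_0;\cdot)<c$ holds throughout the evolution,
$$N\beta(\kappa_0 N;q(t)) \le N\beta(\kappa_0 N;q(0)) + C N\bigl[\rho(\kappa_0 N;q(0))^2 + \rho(\kappa_0 N;q(t))^2\bigr].$$
Applying the operation $[\sum_N N^{rs}(\cdot)^{r/2}]^{1/r}$ and the ensuing (quasi-)triangle inequality, and using that the error sum $\sum_N N^{r(s+\frac12)}\rho(\kappa_0 N;\cdot)^r$ converges (again by $|s|<\tfrac12$) to $\lesssim \kappa_0^{-r(1-2|s|)}\|\cdot\|_{B^{s,2}_r}^{2r}$, the reconstruction above turns this into
$$\| q(t)\|_{B^{s,2}_r} \lesssim_s \kappa_0^{|s|}\|q(0)\|_{B^{s,2}_r} + \kappa_0^{-(1-2|s|)}\bigl(\|q(0)\|_{B^{s,2}_r}^2 + \|q(t)\|_{B^{s,2}_r}^2\bigr).$$
Choosing $\kappa_0 \sim (1+\|q(0)\|_{B^{s,2}_r}^2)^{1/(1-2|s|)}$, so that $\rho(\kappa_0;q(0))$ is small, makes the first term comparable to the asserted bound $\|q(0)\|_{B^{s,2}_r}(1+\|q(0)\|_{B^{s,2}_r}^2)^{|s|/(1-2|s|)}$; a continuity argument in $t$ is then meant to absorb the self-referential term $\kappa_0^{-(1-2|s|)}\|q(t)\|_{B^{s,2}_r}^2$.

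The hard part will be exactly this last, self-referential, quadratic term. Unlike the KdV case there is no analogue of \eqref{comp size}: the leading term $\beta$ does not dominate $\alpha$, and $\rho(\kappa_0;q(t))$ must be controlled \emph{a posteriori} through the very bound we are trying to prove. The crude estimate $|E|\lesssim\rho^2$ closes the continuity argument only for $|s|$ bounded away from $\tfrac12$ (the relation $\kappa_0^{|s|}\|q(0)\|_{B^{s,2}_r} \sim \|q(t)\|_{B^{s,2}_r}$ feeds back to make $\rho(\kappa_0;q(t))\sim\kappa_0^{2|s|}$, which ceases to be small once $|s|$ is large). To reach the full range $-\tfrac12<s<0$ I expect one must estimate the tail $E$ more delicately, replacing Hilbert--Schmidt norms by operator norms of $M$ in the spirit of \eqref{E:Dest2} and bootstrapping from the bounds already obtained at smaller $|s|$ --- the same device that carried the KdV argument from $s<\tfrac34$ up to $s<1$ in Theorem~\ref{T:Besov}.
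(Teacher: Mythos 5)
Your setup --- the leading term via Lemma~\ref{L:AKNS tr}, the tail bound $|E|\lesssim\rho^2$ via Lemma~\ref{L:dumb} and Lemma~\ref{L:AKNS I2}, conservation of $\alpha$ from Propositions~\ref{P:NLS dot} and~\ref{P:HmKdV dot}, and the Besov reconstruction through Lemma~\ref{L:Besov1} --- all match the paper. But the bootstrap as you close it has a genuine gap, which you yourself flag: converting back to the $B^{s,2}_r$ norm on \emph{both} sides of the conservation inequality pays the $\kappa_0^{|s|}$ mismatch in the two-sided equivalence of Lemma~\ref{L:Besov1} at every pass through the continuity argument. Tracking constants in your final display, absorption of the self-referential term $\kappa_0^{-(1-2|s|)}\|q(t)\|_{B^{s,2}_r}^2$ at the scale $\|q(t)\|_{B^{s,2}_r}\sim\kappa_0^{|s|}\|q(0)\|_{B^{s,2}_r}$ requires $\kappa_0^{3|s|-1}\|q(0)\|_{B^{s,2}_r}\lesssim 1$, which with your choice $\kappa_0\sim(1+\|q(0)\|_{B^{s,2}_r}^2)^{1/(1-2|s|)}$ amounts to $\|q(0)\|^{(4|s|-1)/(1-2|s|)}\lesssim 1$: the argument closes only for $|s|\leq\tfrac14$, not the claimed range. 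Worse, your proposed repair cannot work. In the KdV case $\tfrac34\leq s<1$ of Theorem~\ref{T:Besov}, the bootstrap was seeded by the embedding of $B^{s,2}_r$ into the \emph{lower}-regularity space $B^{\frac12,2}_1$; here the embeddings run the wrong way --- data in $B^{s,2}_r$ with $s$ near $-\tfrac12$ lies in no space with smaller $|s|$, so there are no ``bounds already obtained at smaller $|s|$'' to invoke, and the operator-norm device of \eqref{E:Dest2} needs $\hat q\in L^1$ (positive regularity), likewise unavailable.

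The paper's resolution is not a sharper tail estimate but a change of unknown. It introduces the $\kappa_0$-adapted norm $Z_{\kappa_0}$ (equivalent to $B^{s,2}_r$ via \eqref{AKNS Bes}, with loss $\kappa_0^{|s|}$ in one direction only) and proves the key bound \eqref{11:25}: for all $\kappa\geq\kappa_0$ the Hilbert--Schmidt norm squared is $\lesssim(\kappa_0/\kappa)^{1-2|s|}\kappa_0^{-1}\|q\|_{Z_{\kappa_0}}^2$, \emph{directly in terms of $Z_{\kappa_0}$}, never reverting to the Besov norm mid-argument (your \eqref{11:26}-type bound is used only to choose $\kappa_0$ initially). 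The entire continuity argument is then run on the single scalar $x(t):=\kappa_0^{-\frac12}\|q(t)\|_{Z_{\kappa_0}}$: summing the conservation identity over $\kappa=\kappa_0 N$, $N\in\NN$, with the quartic tail bound gives $x(t)=x(0)+O\bigl(x(0)^2+x(t)^2\bigr)$, and the choice $\kappa_0\sim C(1+\|q(0)\|_{B^{s,2}_r}^2)^{1/(1-2|s|)}$ yields $x(0)\lesssim C^{|s|-\frac12}$, small for every $s\in(-\tfrac12,0)$ precisely because $|s|<\tfrac12$ --- so the bootstrap closes on the full range, with \eqref{11:25} also showing that the smallness condition \eqref{k small} propagates. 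One obtains $\|q(t)\|_{Z_{\kappa_0}}\lesssim\|q(0)\|_{Z_{\kappa_0}}$ uniformly in time, and the factor $\kappa_0^{|s|}$ is paid exactly once at the end, which is what produces the exponent $\tfrac{|s|}{1-2|s|}$ in the statement. In short: your ingredients are the paper's, but as assembled they prove the theorem only in a strictly smaller range of $s$, and the missing idea is the adapted norm $Z_{\kappa_0}$, not a refinement of the estimate on $E$.
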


\begin{proof}
It will be convenient to introduce a norm equivalent to the Besov norm, but adapted to a frequency scale $\kappa_0\in\NN$, namely,
\begin{align*}
\| f \|_{Z_{\kappa_0}} := \biggl[ \sum_{N\in\NN} N^{rs} \biggl(\int \frac{2\kappa_0^2N^2 |\hat f(\xi)|^2\,d\xi }{4\kappa_0^2N^2 + \xi^2}\biggr)^{r/2} \,\biggr]^{1/r}.
\end{align*}
That this is an equivalent norm follows from Lemma~\ref{L:Besov1}, which shows
\begin{align}\label{AKNS Bes}
 \|f\|_{B^{s,2}_r}\lesssim \| f \|_{Z_{\kappa_0}} \lesssim \kappa_0^{|s|} \|f\|_{B^{s,2}_r}.
\end{align}

From \eqref{AKNS I2} one easily sees that for $\kappa\geq\kappa_0$,
\begin{align}
\Bigl\| (\kappa-{}&\partial )^{-1/2} q (\kappa+\partial)^{-1/2} \Bigr\|^2_{\mathfrak I_2(\R)} \notag\\
&\lesssim \tfrac1{\kappa}\int_{|\xi|\leq \kappa_0} |\hat q(\xi)|^2\,d\xi
	+ \sum_{N\in\NN} \tfrac{\log(2+\kappa_0^2N^2\kappa^{-2})}{\kappa_0 N+\kappa} \int_{\kappa_0 N \leq |\xi|\leq 2 \kappa_0 N} |\hat q(\xi)|^2\,d\xi \notag\\
&\lesssim   \bigl(\tfrac{\kappa_0}{\kappa}\bigr)^{1-2|s|} \kappa_0^{-1} \| q \|_{Z_{\kappa_0}}^2 \label{11:25} \\
&\lesssim  \kappa^{2|s|-1} \| q \|_{B^{s,2}_r}^2 \label{11:26}.
\end{align}
(The penultimate step is simplest when $r=\infty$, which then implies the other cases.)  Correspondingly, for
$$
\kappa_0 \geq C  \bigl(1+ \| q(0) \|_{B^{s,2}_r}^2\bigr)^{\frac{1}{1-2|s|}}
$$
with some large absolute constant $C$, we have \eqref{k small} for $t\in I$, a small neigborhood of the temporal origin.  On this time interval, we then obtain
\begin{align*}
\biggl| \alpha(\kappa,q(t)) - \int \frac{2\kappa |\hat q(t,\xi)|^2\,d\xi }{4\kappa^2 + \xi^2} \biggr| \lesssim \kappa_0^{-4|s|} \kappa^{4|s|-2} \| q(t) \|_{Z_{\kappa_0}}^4
\end{align*}
and so, by the conservation of $\alpha$, we then deduce that for $t\in I$,
\begin{align*}
\kappa_0^{-\frac12} \| q(t) \|_{Z_{\kappa_0}} = \kappa_0^{-\frac12} \| q(0) \|_{Z_{\kappa_0}}
	+ O\biggl( \Bigl[ \kappa_0^{-\frac12} \| q(0) \|_{Z_{\kappa_0}} \Bigr]^2 + \Bigl[ \kappa_0^{-\frac12} \| q(t) \|_{Z_{\kappa_0}} \Bigr]^2 \biggr).
\end{align*}
Noting that for $\kappa_0$ as above we have
$$
\kappa_0^{-\frac12} \| q(0) \|_{Z_{\kappa_0}} \lesssim C^{|s| - \frac12},
$$
and choosing $C$ larger (if necessary), a simple continuity argument then yields
$$
\| q(t) \|_{Z_{\kappa_0}}^2 \lesssim  \| q(0) \|_{Z_{\kappa_0}}^2
$$
uniformly in time.  (Note that \eqref{11:25} shows that \eqref{k small} propagates.)  The final result then follows from \eqref{AKNS Bes} and our choice of $\kappa_0$.
\end{proof}

The methods presented in the previous section allow one to extend the result to Besov norms with $-\frac12<s<1$; however, we will not pursue this further.  Rather, we wish to discuss to what extent we can approach more closely to the forbidden end-point $s=-\tfrac12$.  To this end, we introduce the following modified Besov norms, depending on a parameter $\kappa_0>0$:
\begin{align*}
\| q \|_{Y_{\kappa_0}} :=   \max\Bigl\{ & \kappa_0^{-\frac12}\| \hat q(\xi) \|_{L^2(|\xi|\leq \kappa_0)}, \\
    & \sup_{N\in\NN}  (\kappa_0 N)^{-\frac12}\log^{2}(2N) \| \hat q(\xi) \|_{L^2(N\kappa_0 < |\xi|\leq 2N\kappa_0)} \Bigr\},
\end{align*}
which mimics $B^{-1/2,2}_\infty(\R)$ and, by analogy with $B^{-1/2,2}_2(\R)\cong H^{-1/2}(\R)$,
\begin{align*}
\| q \|_{X_{\kappa_0}}^2 :=   \kappa_0^{-1}\| \hat q(\xi) \|_{L^2(|\xi|\leq \kappa_0)}^2 + \sum_{N\in\NN}  \frac{\log^{3}(2N)}{\kappa_0 N} \| \hat q(\xi) \|_{L^2(N\kappa_0 < |\xi|\leq 2N\kappa_0)}^2.
\end{align*}
It is not difficult to verify that due to the particular powers of the logarithm involved, neither of these spaces includes the other.

As a rather trivial consequence of~\eqref{AKNS tr}, we can link these norms to the leading term in the series \eqref{AKNS alpha} defining the perturbation determinant:

\begin{lemma}\label{L:XY}
Uniformly in $\kappa_0>0$, we have the following equivalences
\begin{align*}
  \| q \|_{Y_{\kappa_0}}^2  &\approx \sup_{M\in\NN} \log^{4}(2M) \Re \tr\bigl\{(\kappa_0M-\partial)^{-1} q (\kappa_0M+\partial)^{-1} \bar q \bigr\}, \\
  \| q \|_{X_{\kappa_0}}^2  &\approx \sum_{M\in\NN} \log^{3}(2M) \Re \tr\bigl\{(\kappa_0M-\partial)^{-1} q (\kappa_0M+\partial)^{-1} \bar q \bigr\}.
\end{align*}
\end{lemma}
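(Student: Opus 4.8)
The plan is to reduce both equivalences to the single explicit identity furnished by Lemma~\ref{L:AKNS tr}. Writing $T_M := \Re\tr\{(\kappa_0M-\partial)^{-1} q (\kappa_0M+\partial)^{-1}\bar q\}$ and applying that lemma with $\kappa = \kappa_0 M$ gives
$$
T_M = \int \frac{2\kappa_0 M\,|\hat q(\xi)|^2}{4\kappa_0^2 M^2 + \xi^2}\,d\xi.
$$
I would then split the frequency axis into the block $\{|\xi|\le\kappa_0\}$ and the dyadic blocks $\{N\kappa_0<|\xi|\le2N\kappa_0\}$ for $N\in\NN$, and abbreviate $b_0 := \|\hat q\|_{L^2(|\xi|\le\kappa_0)}^2$ and $b_N := \|\hat q\|_{L^2(N\kappa_0<|\xi|\le2N\kappa_0)}^2$. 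On each such block the multiplier $2\kappa_0M/(4\kappa_0^2M^2+\xi^2)$ is constant up to an absolute factor (its argument $\xi^2$ varies only by a factor of $4$), so the integral collapses to the discrete model
$$
T_M \approx \frac{b_0}{\kappa_0 M} + \sum_{N\in\NN} \frac{2M}{\kappa_0(4M^2+N^2)}\,b_N .
$$
In these terms the targets become $\|q\|_{Y_{\kappa_0}}^2 \approx \max\{\kappa_0^{-1}b_0,\ \sup_N (\kappa_0 N)^{-1}\log^4(2N)b_N\}$ and $\|q\|_{X_{\kappa_0}}^2 \approx \kappa_0^{-1}b_0 + \sum_N (\kappa_0 N)^{-1}\log^3(2N)b_N$, directly from their definitions.

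With this reduction the whole matter becomes an estimate relating two sequences indexed by the geometric set $\NN$, and the decisive tool is a Schur-type summation in which the geometric spacing defeats every polynomial-in-$\log$ weight. Writing $M=2^j$ and $N=2^k$ one has $\log(2M)\sim j+1$, and a short case analysis (splitting according to whether $4M^2$ or $N^2$ dominates the denominator) shows $\tfrac{2M}{4M^2+N^2}\sim 2^{-|j-k|}/N$ in both regimes $j\ge k$ and $j<k$, so that sums such as $\sum_k(k+1)^p\,2^{-|j-k|}\lesssim (j+1)^p$ are controlled by their near-diagonal terms. For the $X$-equivalence I would interchange the order of summation in $\sum_M\log^3(2M)\,T_M$ and show block by block that $\sum_M\log^3(2M)\,\tfrac{2M}{4M^2+N^2}\approx \log^3(2N)/N$, together with $\sum_M\log^3(2M)/(\kappa_0 M)\approx\kappa_0^{-1}$ for the $b_0$ term; this reproduces exactly the $X$-norm weights. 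For the $Y$-equivalence the lower bound $\gtrsim$ is immediate by testing $T_M$ at $M=N$ (which makes $4M^2+N^2\sim N^2$) and at $M=1$ for the $b_0$ piece, while the upper bound $\lesssim$ follows by inserting $b_0\le\kappa_0\|q\|_{Y_{\kappa_0}}^2$ and $b_N\le\kappa_0 N\log^{-4}(2N)\|q\|_{Y_{\kappa_0}}^2$ and verifying the single Schur bound $\log^4(2M)\sum_N \tfrac{2MN}{4M^2+N^2}\log^{-4}(2N)\lesssim 1$ uniformly in $M$.

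The routine algebra aside, the only real content is the pair of off-diagonal summation estimates above, and the point I would stress is that they hold precisely because $\NN$ is geometric: in a dyadic sum $\sum_k(k+1)^p\,2^{-|j-k|}$ the geometric factor dominates any fixed power of the logarithm, so every sum is comparable to its peak term on the diagonal $k\sim j$. This also explains why the argument is insensitive to the exact exponents $3$ and $4$ in the definitions of $X_{\kappa_0}$ and $Y_{\kappa_0}$: the same computation matches $\log^p(2M)$ on the trace side to $\log^p(2N)$ on the norm side for any $p$, the specific choices being dictated only by the later application in Theorem~\ref{T:XY}. I expect no genuine obstacle beyond bookkeeping the low-frequency block $\{|\xi|\le\kappa_0\}$ consistently on both sides.
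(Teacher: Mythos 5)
Your proposal is correct and follows exactly the route the paper intends: the lemma is stated there as an immediate consequence of \eqref{AKNS tr}, and your argument simply carries this out --- applying Lemma~\ref{L:AKNS tr} with $\kappa=\kappa_0 M$, reducing to the discrete model via the dyadic blocks from the definitions of $X_{\kappa_0}$ and $Y_{\kappa_0}$, and closing with the off-diagonal summation bounds in which the geometric factor $2^{-|j-k|}$ absorbs the logarithmic weights. All the key computations (the two-sided nature of the discrete model thanks to the nonnegativity of the integrand, the Schur bound for $Y$, and the block sums $\sum_M \log^3(2M)\tfrac{2M}{4M^2+N^2}\approx \log^3(2N)/N$ for $X$) check out, with the correct uniform-in-$\kappa_0$ scaling.
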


As we shall see, this leads rather quickly to our final result, which essentially says that these new norms are conserved by the flow.

\begin{theorem}\label{T:XY}
Let $q(t)$ be a Schwartz solution to \eqref{NLS} or \eqref{HmKdV}. If $\kappa_0$ is large enough so that
\begin{align*}
\| q(0) \|_{Y_{\kappa_0}} \leq c, \qtq{then} C^{-1} \| q(0) \|_{Y_{\kappa_0}} \leq \sup_{t\in\R}\| q(t) \|_{Y_{\kappa_0}} \leq C \| q(0) \|_{Y_{\kappa_0}}
\end{align*}
for some absolute constants $c,C>0$.  Analogously,
\begin{align*}
\| q(0) \|_{X_{\kappa_0}} \leq c \quad\implies\quad C^{-1} \| q(0) \|_{X_{\kappa_0}} \leq\sup_{t\in\R}\| q(t) \|_{X_{\kappa_0}} \leq C \| q(0) \|_{X_{\kappa_0}}.
\end{align*}
\end{theorem}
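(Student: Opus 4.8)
The plan is to replicate the architecture used for Theorem~\ref{T:AKNS Besov}: isolate the leading ($\ell=1$) term of the series \eqref{AKNS alpha}, control the remainder by the Hilbert--Schmidt norm, and close a continuity argument using the conservation of $\alpha$. Write $\beta(\kappa;q):=\Re\tr\{(\kappa-\partial)^{-1}q(\kappa+\partial)^{-1}\bar q\}$ for the leading term, so that by Lemma~\ref{L:XY} the two norms are equivalent to $\sup_{M\in\NN}\log^4(2M)\beta(\kappa_0M;q)$ and $\sum_{M\in\NN}\log^3(2M)\beta(\kappa_0M;q)$, respectively. Cycling the trace shows that the $\ell=1$ term of \eqref{AKNS alpha} is exactly $\beta(\kappa;q)$. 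Writing $A=(\kappa-\partial)^{-1/2}q(\kappa+\partial)^{-1}\bar q(\kappa-\partial)^{-1/2}=B_1B_2$ with the two Hilbert--Schmidt factors $B_1,B_2$ intertwined as in the discussion preceding Lemma~\ref{L:AKNS I2}, the trace H\"older inequality \eqref{dumb holder} gives $|\tr(A^\ell)|\le h(\kappa)^\ell$, where $h(\kappa):=\|(\kappa-\partial)^{-1/2}q(\kappa+\partial)^{-1/2}\|^2_{\mathfrak{I}_2}$ is comparable to the right side of \eqref{AKNS I2}. Summing the geometric tail yields
$$
|\alpha(\kappa;q)-\beta(\kappa;q)|\le\sum_{\ell=2}^\infty\tfrac1\ell\,h(\kappa)^\ell\lesssim h(\kappa)^2
$$
whenever $h(\kappa)$ is small; note that $h(\kappa)$ is precisely the quantity controlling the convergence condition \eqref{k small}.

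The crux is to bound the remainder $h(\kappa_0M)^2$ at the sample points $\kappa=\kappa_0M$ by the norms themselves. Decomposing the integral in \eqref{AKNS I2} into the frequency blocks defining $Y_{\kappa_0}$ and $X_{\kappa_0}$ gives
$$
h(\kappa_0M)\approx\frac{b_0}{M}+\sum_{N\in\NN}\frac{\log(4+N^2/M^2)\,N}{\sqrt{4M^2+N^2}}\,b_N,
$$
with $b_0=\kappa_0^{-1}\|\hat q\|^2_{L^2(|\xi|\le\kappa_0)}$ and $b_N=(\kappa_0N)^{-1}\|\hat q\|^2_{L^2(\kappa_0N<|\xi|\le2\kappa_0N)}$. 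I would then establish the two deterministic inequalities
$$
\log^2(2M)\,h(\kappa_0M;q)\lesssim\|q\|^2_{Y_{\kappa_0}}\quad(\text{uniformly in }M)
\qtq{and}
\sum_{M\in\NN}\log^3(2M)\,h(\kappa_0M;q)^2\lesssim\|q\|^4_{X_{\kappa_0}}.
$$
The first follows by inserting $b_N\le\log^{-4}(2N)\|q\|^2_{Y_{\kappa_0}}$ and splitting at $N=M$: the block $N\le M$ contributes $O(\log^{-4}(2M))$ since the dyadic sum is dominated by its top term $N\sim M$, while the block $N>M$ contributes $O(\log^{-2}(2M))$ by summing $\log^{-3}(2N)$ over $N>M$. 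The second reduces, upon writing $h(\kappa_0M)=\sum_N\tilde K(M,N)\bigl(\log^3(2N)b_N\bigr)$ with $\tilde K(M,N)=\log^{-3}(2N)K(M,N)$, to the Schur-type bound $\sum_M\log^3(2M)\tilde K(M,N)^2\lesssim1$ uniformly in $N$, which after Cauchy--Schwarz in the two frequency indices upgrades to the claimed quartic estimate. The low-frequency mode $b_0$ is handled analogously and more easily. This pair of estimates is where the logarithmic powers $4$ and $3$ in the definitions of $Y_{\kappa_0}$ and $X_{\kappa_0}$ are used, and making them close is the heart of the matter.

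With these in hand the conclusion follows by a continuity argument. If $\|q(0)\|_{Y_{\kappa_0}}\le c$ is small, the first deterministic inequality makes $h(\kappa_0M;q(0))$ small for every $M$, so \eqref{k small} holds and Proposition~\ref{P:NLS dot} (respectively Proposition~\ref{P:HmKdV dot}) gives $\alpha(\kappa_0M;q(t))=\alpha(\kappa_0M;q(0))$ near $t=0$. Combining conservation with the remainder bound yields
$$
\beta(\kappa_0M;q(t))=\beta(\kappa_0M;q(0))+O\!\bigl(h(\kappa_0M;q(t))^2+h(\kappa_0M;q(0))^2\bigr),
$$
and multiplying by $\log^4(2M)$ and taking the supremum (respectively multiplying by $\log^3(2M)$ and summing), then invoking the deterministic inequalities and Lemma~\ref{L:XY}, gives
$$
\|q(t)\|^2_{Y_{\kappa_0}}\lesssim\|q(0)\|^2_{Y_{\kappa_0}}+\|q(t)\|^4_{Y_{\kappa_0}}+\|q(0)\|^4_{Y_{\kappa_0}}
$$
together with its $X_{\kappa_0}$ analogue. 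For $c$ sufficiently small the standard bootstrap then propagates $\|q(t)\|_{Y_{\kappa_0}}\le C\|q(0)\|_{Y_{\kappa_0}}$ to all $t$, the same argument simultaneously maintaining the smallness of $h$ and hence \eqref{k small}, keeping $\alpha$ conserved; the matching lower bound is immediate from $\sup_t\|q(t)\|_{Y_{\kappa_0}}\ge\|q(0)\|_{Y_{\kappa_0}}$. The $X_{\kappa_0}$ statement is proved identically. The main obstacle is the pair of deterministic inequalities above---especially the bilinear $X_{\kappa_0}$ estimate---whose success hinges delicately on the chosen logarithmic weights.
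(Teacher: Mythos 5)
Your proposal is correct and follows essentially the same route as the paper: the two deterministic tail estimates you identify, namely $\log^2(2M)\,h(\kappa_0 M)\lesssim \|q\|_{Y_{\kappa_0}}^2$ uniformly in $M$ and $\sum_{M\in\NN}\log^3(2M)\,h(\kappa_0 M)^2\lesssim\|q\|_{X_{\kappa_0}}^4$, are precisely the two displayed bounds in the paper's proof, obtained there by the same block decomposition of \eqref{AKNS I2} and the same Schur-type summation. The conclusion via Lemma~\ref{L:XY}, conservation of $\alpha$ from Propositions~\ref{P:NLS dot} and~\ref{P:HmKdV dot}, the quadratic inequality $\sup_{|t|\leq T}\|q(t)\|^2\lesssim\|q(0)\|^2+\sup_{|t|\leq T}\|q(t)\|^4$, and the closing continuity/bootstrap argument likewise match the paper's proof step for step.
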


\begin{proof}
The main part of the proof is to obtain a suitable estimate on the tail of the series \eqref{AKNS alpha}.  In view of Lemma~\ref{L:AKNS I2}, we have
\begin{align*}
\log^{2}(2M) & \Bigl\|(\kappa_0M -\partial)^{-1/2} q (\kappa_0M+\partial)^{-1/2} \Bigr\|^2_{\mathfrak I_2(\R)}  \\
&\lesssim \frac{\log^{2}(2M)}{\kappa_0M} \int_{|\xi|\leq \kappa_0} |\hat q(\xi)|^2\,d\xi \\
&\qquad    +  \sum_{N\in\NN} \frac{\log^{2}(2M)\log\bigl(2+\tfrac NM\bigr)}{\kappa_0M+\kappa_0N} \int_{\kappa_0N < |\xi|\leq 2\kappa_0N} |\hat q(\xi)|^2\,d\xi \\
&\lesssim  \| q \|_{Y_{\kappa_0}}^2 \biggl[ \frac{\log^2(2M)}{M}  +  \sum_{N\in\NN} \frac{\log^{2}(2M)\log\bigl(2+\tfrac NM\bigr) N}{(M+N)\log^{4}(2N)} \biggr] \\
&\lesssim \| q \|_{Y_{\kappa_0}}^2
\end{align*}
uniformly for $M\in\NN$.  Analogously,
\begin{align*}
\sum_{M\in\NN} & \log^{3}(2M)  \Bigl\|(\kappa_0M -\partial)^{-1/2} q (\kappa_0M+\partial)^{-1/2} \Bigr\|^4_{\mathfrak I_2(\R)}  \\
&\lesssim \sum_{M\in\NN}  \biggl[ \frac{\log^{3/2}(2M)}{\kappa_0M} \int_{|\xi|\leq \kappa_0} |\hat q(\xi)|^2\,d\xi \biggr]^2 \\
&\ \ \ \   + \sum_{M\in\NN}  \biggl[  \sum_{N\in\NN} \frac{\log^{3/2}(2M)\log\bigl(2+\tfrac NM\bigr)}{\kappa_0M+\kappa_0N} \int_{\kappa_0N < |\xi|\leq 2\kappa_0N} |\hat q(\xi)|^2\,d\xi \biggr]^2\\
&\lesssim \| q \|_{X_{\kappa_0}}^4  \biggl[ 1+ \sup_{N\in\NN}\sum_{M\in\NN}  \frac{\log^{3}(2M)\log^2\bigl(2+\tfrac NM\bigr)N^2}{(M+N)^2\log^6(2N)} \biggr]
    \lesssim \| q \|_{X_{\kappa_0}}^4.
\end{align*}

Combining the above with Lemma~\ref{L:XY} and conservation of $\alpha(\kappa;q(t))$ we see that
\begin{align*}
\sup_{|t|\leq T} \| q(t) \|_{Y_{\kappa_0}}^2 \lesssim \| q(0) \|_{Y_{\kappa_0}}^2 + \sup_{|t|\leq T} \| q(t) \|_{Y_{\kappa_0}}^4,
\end{align*}
provided the left-hand side is sufficiently small (to guarantee convergence of the series \eqref{AKNS alpha}).  The result then follows by a simple continuity argument.
\end{proof}

\end{document}